\newtheorem{thm}{Theorem}[section]
\newtheorem{prop}[thm]{Proposition}
\newtheorem{cor}[thm]{Corollary}
\newtheorem{conj}[thm]{Conjecture}
\newtheorem{lemma}[thm]{Lemma}
\newtheorem{rmk}[thm]{Remark}
\theoremstyle{definition}
\newtheorem{dfn}[thm]{Definition}
\numberwithin{equation}{section}
\let\originalleft\left
\let\originalright\right
\renewcommand{\left}{\mathopen{}\mathclose\bgroup\originalleft}
\renewcommand{\right}{\aftergroup\egroup\originalright}
\newcommand{\ord}{\mathsf{ord}}
\newcommand{\neck}{\mathsf{Neck}}
\newcommand{\mneck}{\mathsf{MNeck}}
\newcommand{\comp}{\mathsf{Comp}}
\newcommand{\ccomp}{\mathsf{CComp}}
\newcommand{\pcomp}{\mathsf{PComp}}
\newcommand{\mob}{\mathsf{M\ddot{o}b}}
\title{A combinatorial proof of a symmetry for a refinement of the Narayana numbers}
\author{Mikl\'os B\'ona}
\thanks{B\'ona is partially supported by Simons Foundation Collaboration Awards 421967 and 940024.}
\address{Department of Mathematics\\
         University of Florida\\
\url{https://people.clas.ufl.edu/bona/}}
\email{bona@ufl.edu}
\author{Stoyan Dimitrov}
\address{Department of Mathematics\\
         Rutgers University\\
\url{https://stoyandimitrov.net/}}
\email{EmailToStoyan@gmail.com}
\author{Gilbert Labelle}
\address{LACIM \\ D\'epartement de math\'ematiques\\
         Universit\'e du Qu\'ebec \`a Montr\'eal
\url{}}
\email{labelle.gilbert@uqam.ca}
\author{Yifei Li}
\address{Department of Mathematical Sciences \& Philosophy \\
         University of Illinois at Springfield
\url{}}
\email{yli236@uis.edu}
\author{Joseph Pappe}
\thanks{Pappe is partially supported by the National Science Foundation under Awards DMS-1760329 and DMS-2053350.}
\address{Department of Mathematics\\
         Colorado State University\\
\url{https://sites.google.com/view/josephpappe/home/}}
\email{joseph.pappe@colostate.edu}
\author{Andr\'es R. Vindas-Mel\'endez}
\thanks{Vindas-Mel\'endez was partially supported by the National Science Foundation under Award DMS-2102921.}
\address{Department of Mathematics, 
Harvey Mudd College\\
\url{https://math.hmc.edu/arvm/}}
\email{avindasmelendez@g.hmc.edu}
\author{Yan Zhuang}
\thanks{Zhuang is partially supported by an AMS-Simons Travel Grant, and by the National Science Foundation under Award DMS-2316181.}
\address{Department of Mathematics and Computer Science\\
         Davidson College\\
\url{https://yanzhuang.name/}}
\email{yazhuang@davidson.edu}
\global\long\def\Nar{\operatorname{Nar}}%
\begin{document}


\begin{abstract}
We establish a tantalizing symmetry of certain numbers refining the Narayana numbers. 
In terms of Dyck paths, this symmetry is interpreted in the following way: if $w_{n,k,m}$ is the number of Dyck paths of semilength $n$ with $k$ occurrences of $UD$ and $m$ occurrences of $UUD$, then $w_{2k+1,k,m}=w_{2k+1,k,k+1-m}$.
We give a combinatorial proof of this fact, relying on the cycle lemma, and showing that the numbers $w_{2k+1,k,m}$ are multiples of the Narayana numbers. We prove a more general fact establishing a relationship between the numbers $w_{n,k,m}$ and a family of generalized Narayana numbers due to Callan. 
A closed-form expression for the even more general numbers $w_{n,k_{1},k_{2},\ldots , k_{r}}$ counting the semilength-$n$ Dyck paths with $k_{1}$ $UD$-factors, $k_{2}$ $UUD$-factors, $\ldots$ , and $k_{r}$ $U^{r}D$-factors is also obtained, as well as a more general form of the discussed symmetry for these numbers in the case when all rise runs are of certain minimal length. 
Finally, we investigate properties of the polynomials $W_{n,k}(t)= \sum_{m=0}^k w_{n,k,m} t^m$, including real-rootedness, $\gamma$-positivity, and a symmetric decomposition.
\end{abstract}

\maketitle

\section{Introduction}

A \emph{lattice path} is a path in the discrete integer lattice $\mathbb{Z}^n$ consisting of a sequence of steps from a prescribed \emph{step set} and satisfying prescribed restrictions.
Among classical lattice paths, Dyck paths are perhaps the most well-studied. 
A \emph{Dyck path} of \emph{semilength} $n$ is a path in $\mathbb{Z}^2$ with step set $S= \{ (1,1), (1,-1) \}$ that starts at the origin $(0,0)$, ends at $(2n,0)$, and never traverses below the horizontal axis. 
The steps $(1,1)$ are called \emph{up steps} and can be represented by the letter $U$, while the steps $(1,-1)$ are \emph{down steps} and denoted $D$. 
It is well known that the number of Dyck paths of semilength $n$ is the Catalan number $C_n=\binom{2n}{n}/(n+1)$ which is known to enumerate more than 250 other families of combinatorial objects \cite{stanleyCat}.

The number of Dyck paths of semilength $n$ that contain exactly $k$ places where a $U$ is immediately followed by a $D$---called $UD$-\emph{factors} (or \emph{peaks})---is equal to the \emph{Narayana number} $N_{n,k}$ defined by
\[N_{n,k} = \frac{1}{n}\binom{n}{k} \binom{n}{k-1}, \]
for all $1 \leq k \leq n$ and $N_{0,0} = 1$. Knowing this, it is easy to see that $N_{n,k}$ also counts the number of Dyck paths of semilength $n$ with $k-1$ $DU$-factors (\emph{valleys}). The Narayana numbers are known to exhibit the symmetry $N_{n,k} = N_{n,n+1-k}$, which can be proved combinatorially via several involutions; see, for example, \cite{Deutsch, Kreweras1970, Kreweras1972, Lalanne1992}.

There is interest in counting Dyck paths (and other kinds of lattice paths) by the number of occurrences of longer factors, or even finding the \emph{joint distribution} of occurrences of multiple kinds of factors.
Two early works in this direction are \cite{Deutsch1999, sapounakis-tsoulas-tsikouras}. 
In \cite{wang}, Wang discusses a general technique that is useful for obtaining the relevant generating functions in many such cases; see also \cite{yam}.
Our work is concerned with the joint distribution of $UD$-factors and $UUD$-factors over Dyck paths. This does not seem to have been studied before, and is different from the instances discussed in the works cited above as the factor $UD$ is an ending segment of the factor $UUD$. 
We can also interpret these factors in terms of \emph{rise runs}---maximal consecutive subsequences of up steps---as the number of $UD$-factors is equal to the total number of rise runs, and the number of $UUD$-factors is equal to the number of rise runs of length at least 2.

Let $w_{n,k,m}$ be the number of Dyck paths of semilength $n$ with $k$ $UD$-factors and $m$ $UUD$-factors. 
Then we have the following symmetry:
\begin{thm}
\label{th:main}
  For all $1 \leq m \leq k$, we have 
  $$
  w_{2k+1,k,m} = w_{2k+1,k, k+1-m}.
  $$
\end{thm}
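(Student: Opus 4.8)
The plan is to reduce the stated symmetry to the classical Narayana symmetry by proving an \emph{exact product formula} for $w_{2k+1,k,m}$. First I would translate everything into the language of runs. Since the number of $UD$-factors equals the number of up-runs and the number of $UUD$-factors equals the number of up-runs of length at least $2$, a Dyck path counted by $w_{2k+1,k,m}$ is determined by a composition $(a_1,\dots,a_k)$ of $2k+1$ into $k$ positive parts with exactly $m$ parts $\geq 2$ (the up-runs), together with a composition $(b_1,\dots,b_k)$ of $2k+1$ into $k$ positive parts (the down-runs), subject to the ballot condition $a_1+\cdots+a_j \geq b_1+\cdots+b_j$ for all $j$. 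The $m$ long up-runs carry a total ``excess'' $\sum(a_i-2) = k+1-m$, so sending $m \mapsto k+1-m$ swaps the \emph{number} of long runs with their \emph{total excess length}. This self-duality is exactly the symmetry to be explained; but a naive conjugation on the up-run side cannot work on its own, since the admissible down-runs depend on the entire profile of partial sums of $(a_i)$, not merely on $m$. Decoupling this ballot condition from the long/short pattern is the crux.

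Computing small cases ($k=1,2,3$ give the rows $(1)$; $(5,5)$; $(21,63,21)$) suggests the clean identity
\[ w_{2k+1,k,m} \;=\; \binom{2k+1}{k-1}\, N_{k,m}, \]
where $N_{k,m}$ is the Narayana number. Because the prefactor $\binom{2k+1}{k-1}$ is independent of $m$ and $N_{k,m}=N_{k,k+1-m}$ is the classical Narayana symmetry, this identity would immediately give $w_{2k+1,k,m}=w_{2k+1,k,k+1-m}$ on the full range $1\leq m\leq k$. Thus the entire problem reduces to proving this formula, and I would pursue it in two ways.

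The algebraic route is to set up the trivariate generating function by run decomposition: a short up-run contributes one factor, a long up-run a factor marked by the long-run variable, and each down-run another factor. Solving the resulting functional equation by the kernel method and extracting the diagonal coefficient at semilength $2k+1$, I would try to show it factors as $\binom{2k+1}{k-1}$ times the Narayana polynomial $\sum_m N_{k,m}\,t^m$. The bijective route, which I find more illuminating, is to pass to plane trees with $2k+1$ edges and $k$ leaves (peaks become leaves) and build a map recording a ``Narayana shape''---a plane tree with $k$ edges and $m$ leaves, counted by $N_{k,m}$---together with auxiliary placement data. Here the cycle lemma, applied to a cyclic arrangement of the steps, is the natural device for absorbing the ballot condition and forcing each shape to have exactly $\binom{2k+1}{k-1}$ preimages.

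I expect the main obstacle to be precisely the uniformity of this multiplicity: the number of admissible down-run sequences attached to a fixed up-run composition is \emph{not} constant across compositions sharing the same $m$, so the factor $\binom{2k+1}{k-1}$ emerges only after summing, and only the total is well behaved. Making the cycle-lemma fibration manifestly independent of $m$---equivalently, proving that the diagonal of the generating function factors---is where the real work lies.
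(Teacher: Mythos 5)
Your reduction is exactly the paper's own: the product formula you conjecture, $w_{2k+1,k,m}=\binom{2k+1}{k-1}N_{k,m}$, is Theorem~\ref{th:relationNandW}, and once it is in hand the symmetry follows immediately from $N_{k,m}=N_{k,k+1-m}$. Your small-case evidence is correct, your excess computation $\sum(a_i-2)=k+1-m$ is correct, and the bijective route you sketch (plane trees, a ``Narayana shape,'' the cycle lemma forcing fibers of size $\binom{2k+1}{k-1}$) is the paper's strategy in outline. But the proposal stops exactly where the proof must start: you never establish that the fiber over each Narayana shape has the constant size $\binom{2k+1}{k-1}$, and you explicitly defer this (``where the real work lies''). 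That uniformity \emph{is} the theorem; without it, neither your bijective route nor your unexecuted kernel-method route proves anything. This is a genuine gap, not a detail.

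What closing it requires, and what your sketch lacks, is the following chain of ideas. First, the fibration cannot be over rise compositions, nor over any data attached to a single composition --- as you yourself observe, those fibers are not constant. The paper instead decomposes each plane tree with $2k+1$ non-root vertices and $k$ leaves into ``extended leaves,'' producing a \emph{cyclic} composition of $2k+1$ into $k$ parts in which good leaves correspond to parts of size at least $2$; the grouping is by cyclic equivalence class. Second, cyclic compositions of $2k+1$ into $k$ parts with exactly $m$ parts at least $2$ are counted by $N_{k,m}$ --- one application of the cycle lemma (Lemma~\ref{lemma:part1_path}). Third, and decisively, because $\gcd(2k+1,k)=1$ every such cyclic composition is \emph{primitive} (Lemma~\ref{lemma:2k+1_primitive}), so the associated necklace admits exactly $\binom{2k+1}{k-1}$ markings of $k-1$ of its $2k+1$ non-leaf vertices; a second application of the cycle lemma, in the form of Corollary~\ref{cor:reverse_cycle} applied to a word in $k-1$ marks and $k$ extended leaves, shows these markings biject with the plane trees whose extended-leaf decomposition lies in that necklace (Lemma~\ref{lemma:part2_trees}). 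The relative primality of $2k+1$ and $k$ is the reason the fiber size is independent not only of $m$ but of the class altogether --- this is the decoupling mechanism you correctly identified as the crux but did not supply, and it is invisible at the level of generality of your sketch (indeed, for semilength $2k$ the analogous classes are not all primitive and the fiber sizes genuinely vary, cf.\ Lemma~\ref{lemma:markednecksize}).
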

\noindent In other words, among all Dyck paths of semilength $2k+1$ with $k$ $UD$-factors, the number of those with $m$ $UUD$-factors is equal to the number of those with $k+1-m$ $UUD$-factors. 
To prove this symmetry, the first and third authors used generating function techniques to derive the following closed formula for the numbers $w_{n,k,m}$ (see Section~\ref{sec:app} in the Appendix), from which Theorem \ref{th:main} readily follows.

\begin{thm} \label{t-explicit} We have
\begin{equation*} 
w_{n,k,m} = \begin{cases}
    \displaystyle{\frac{1}{k}\binom{n}{k-1}\binom{n-k-1}{m-1}\binom{k}{m}}, & \text{ if } 0<m\leq k, \, \text{and } k+m\leq n, \\
    1, & \text{ if } m = 0 \text{ and } n = k,\\
    0, & \text{ otherwise.}
\end{cases}
\end{equation*}
\end{thm}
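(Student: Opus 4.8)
The plan is to prove the closed formula by setting up and solving a functional equation for the trivariate generating function
\[
W(x,y,z) \;=\; \sum_{n,k,m} w_{n,k,m}\, x^n y^k z^m,
\]
in which $x$ marks each up-step (equivalently the semilength, since a Dyck path has $n$ up-steps), $y$ marks each $UD$-factor, and $z$ marks each $UUD$-factor. By the run interpretation from the introduction, $y$ counts the up-runs and $z$ counts the up-runs of length at least $2$, so an up-run of length $a\geq 1$ carries weight $yx$ if $a=1$ and $yzx^{a}$ if $a\geq 2$; summed over $a$, a single up-run contributes the weight $A(x,y,z) = y\bigl(x + z x^{2}/(1-x)\bigr)$. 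First I would build a Dyck path one block $U^{a}D^{b}$ at a time and introduce a catalytic variable $q$ recording the current height; this keeps the down-run lengths $b$ bounded by the height (avoiding divergence) and yields a kernel-type functional equation for the refined series $W(x,y,z;q)$, with $W(x,y,z)=W(x,y,z;0)$ recovering the paths that return to the axis.

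Next I would solve the functional equation. Clearing the catalytic variable by the kernel method reduces the problem to a quadratic equation in $W$, so I expect a radical closed form of the shape $W = \bigl(1 - B - \sqrt{(1-B)^{2} - 4A}\bigr)/(2A)$ with $A,B$ explicit in $x,y,z$ and built from the single-up-run weight $A(x,y,z)$ above. As a sanity check I would verify that setting $z=1$ collapses everything to the Narayana generating function, consistent with the identity $\sum_m w_{n,k,m}=N_{n,k}$, which in turn follows from Vandermonde's identity applied to the claimed formula.

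The main obstacle will be the coefficient extraction. From the algebraic closed form I would apply Lagrange inversion in the variable conjugate to the number of up-runs to isolate $[x^{n} y^{k} z^{m}]\,W$. The prefactor $\tfrac1k\binom{n}{k-1}$ is the typical Lagrange-inversion signature of a $k$-fold product of blocks, while the $z$-dependence carried by $A(x,y,z)$ should, after expanding $\bigl(x + z x^{2}/(1-x)\bigr)^{k}$ by the binomial theorem, produce the factor $\binom{k}{m}\binom{n-k-1}{m-1}$: choose which $m$ of the $k$ up-runs are long, then distribute the $n-k$ surplus up-steps among those $m$ runs so that each receives at least one. The real work is collapsing the resulting double sum to this single product and pinning down the support: the degenerate case $m=0$ forces every up-run to have length $1$, hence $n=k$ and the unique path $(UD)^{k}$, giving $w_{k,k,0}=1$; and the constraints $m\leq k$ and $k+m\leq n$ hold because $m$ long runs and $k-m$ short runs already consume at least $2m+(k-m)=k+m$ up-steps.

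Finally, the symmetry of Theorem~\ref{th:main} drops out of the formula: substituting $n=2k+1$ gives
\[
w_{2k+1,k,m} \;=\; \tfrac1k\binom{2k+1}{k-1}\binom{k}{m-1}\binom{k}{m},
\]
and since $\binom{k}{m-1}=\binom{k}{k+1-m}$ and $\binom{k}{m}=\binom{k}{k-m}$, the product $\binom{k}{m-1}\binom{k}{m}$ is invariant under $m\mapsto k+1-m$, whence $w_{2k+1,k,m}=w_{2k+1,k,k+1-m}$.
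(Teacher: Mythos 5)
Your proposal is correct, and while it shares the paper's generating-function starting point (Section \ref{explicit_solution}), it takes a genuinely different route through the hard part. The paper derives the quadratic $(x-x^2y+x^2yz)W^2-(1+x-xy)W+1=0$ from the first-return decomposition into irreducible paths; your run-block decomposition with a catalytic height variable, after the kernel method, produces the kernel-root equation $W=1+y\bigl(xW+z(xW)^2/(1-xW)\bigr)$, which rationalizes to exactly the same quadratic, so up to this point the two arguments are equivalent. The real divergence is coefficient extraction: the paper solves the quadratic as a radical, expands it through the Catalan series with multinomial and negative-binomial expansions, and then tames the resulting double sum with a Maple summation and the limit evaluation of an indeterminate trigonometric ratio, whereas you can exploit the fact that the kernel-root equation is already in Lagrange form. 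Indeed, with $R=xW$ one has $R=x\Phi(R)$ where $\Phi(t)=1+y\bigl(t+zt^2/(1-t)\bigr)$, and Lagrange inversion yields
\begin{equation*}
w_{n,k,m}=[x^{n+1}y^{k}z^{m}]\,R=\frac{1}{n+1}\binom{n+1}{k}\,[t^{n}z^{m}]\left(t+\frac{zt^{2}}{1-t}\right)^{k}=\frac{1}{n+1}\binom{n+1}{k}\binom{k}{m}\binom{n-k-1}{m-1}
\end{equation*}
for $m\geq 1$, with $\frac{1}{n+1}\binom{n+1}{k}=\frac{1}{k}\binom{n}{k-1}$; for $m=0$ the extraction gives $[t^{n}]t^{k}$, forcing $n=k$ and $w_{k,k,0}=1$, and the vanishing outside the stated support is automatic from the coefficient extraction. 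This buys a self-contained, computer-algebra-free proof in which the binomial factors retain exactly the combinatorial meaning you describe ($\binom{k}{m}$ choices of long runs, $\binom{n-k-1}{m-1}$ ways to distribute the $n-k$ surplus up steps among them). It is also, in effect, the algebraic twin of the paper's separate combinatorial proof of this theorem in Section \ref{s-generalized}, which establishes $w_{n,k,m}=\frac{1}{k}\binom{n}{k-1}\lvert\comp_{n,k,m}\rvert$ via the cycle lemma---no coincidence, since the cycle lemma is equivalent to Lagrange inversion (Raney, as cited in the paper). Two small points to tighten in a write-up: the inversion is performed in $x$ (via $R=xW$), not in ``the variable conjugate to the number of up-runs''; and the identification of the $q=0$ specialization with the kernel root, while routine, should be spelled out.
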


Nonetheless, Theorem \ref{th:main} cries for combinatorial explanation, but a combinatorial proof eluded the first and third authors for several years.
The remaining authors joined this project in Summer 2022 as part of the AMS Mathematics Research Community \emph{Trees in Many Contexts}, during which we found a combinatorial proof for Theorem \ref{th:main}. This proof is the main focus of the current paper. In fact, we give a combinatorial proof for the following identity relating the numbers $w_{2k+1,k,m}$ to Narayana numbers.
\begin{thm}
\label{th:relationNandW}
For all $k \geq 1$ and $m\geq 0$, we have
\begin{equation}
\label{eq:relNW}
w_{2k+1,k, m} = \binom{2k+1}{k-1} N_{k,m}.    
\end{equation}
\end{thm}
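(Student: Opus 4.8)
Although \eqref{eq:relNW} follows immediately by algebra from the closed form in Theorem~\ref{t-explicit} (indeed $\tfrac1k\binom{2k+1}{k-1}\binom{k}{m-1}\binom{k}{m}=\binom{2k+1}{k-1}N_{k,m}$), the goal is a combinatorial proof, so the plan is to pass to run-length data and invoke the cycle lemma. First I would use the run interpretation from the introduction: a Dyck path of semilength $2k+1$ with $k$ $UD$-factors is a pair of compositions $(a_1,\dots,a_k)$ and $(b_1,\dots,b_k)$ of $2k+1$ into $k$ positive parts---the up-run and down-run lengths---subject to the lattice condition $a_1+\cdots+a_j\ge b_1+\cdots+b_j$ for all $j$, and $m$ is the number of indices with $a_i\ge 2$. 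Writing $c_i=a_i-1$, the up-run profiles with exactly $m$ long runs are obtained by choosing the $m$ long positions ($\binom{k}{m}$ ways) and distributing the total excess $(2k+1)-k=k+1$ among them ($\binom{k}{m-1}$ ways), giving $\binom{k}{m}\binom{k}{m-1}=k\,N_{k,m}$ profiles. The Narayana factor $N_{k,m}$ is thus already visible, and I would carry the equivalent plane-tree model (up-runs corresponding to leftmost descending chains to leaves, so that $k$ is the number of leaves and $m$ the number of such chains of length at least two) to keep the bookkeeping canonical.

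The crux is the following reformulation: by the step above, the theorem is equivalent to the identity $\sum_{a}\#\{b:(a,b)\ \mathrm{valid}\}=\binom{2k+1}{k-1}N_{k,m}$, where $a$ ranges over the $k\,N_{k,m}$ up-run profiles with exactly $m$ long runs and $b$ over the down-run completions satisfying the lattice condition. Since the number of profiles is $k\,N_{k,m}$, this says the completions average to the profile-independent value $\tfrac1k\binom{2k+1}{k-1}$, and I would establish it with the cycle lemma. Viewing the alternating word $U^{a_1}D^{b_1}\cdots U^{a_k}D^{b_k}$ as a necklace of $k$ up-runs and $k$ down-runs, the $k$ rotations beginning at an up-run are linear words, and after the standard augmentation by one extra down-step (making the length $4k+3$ odd) the cycle lemma guarantees exactly one positive representative in each cyclic class. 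The count of how many down-compositions survive this selection is what I expect to yield the uniform factor $\binom{2k+1}{k-1}$, and the cyclic averaging is exactly what turns $k\,N_{k,m}$ into $\binom{2k+1}{k-1}N_{k,m}$.

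The main obstacle is the compatibility of the run statistics with cyclic rotation: neither $k$ nor $m$ is preserved by a naive rotation of the $U/D$ word, so the cycle lemma must be applied at the level of runs, and the extra down-step must be inserted canonically so that it neither creates nor destroys a $UUD$-factor. Verifying that the distinguished positive representative is counted with the correct multiplicity---in particular when the height function attains its minimum more than once---is the delicate point; the case $k=2$, where the completion counts in the two $m$-classes are $\{1,4\}$ and $\{2,3\}$ (each averaging $\tfrac52=\tfrac1k\binom{2k+1}{k-1}$), is a useful check. Finally I would run the same analysis for arbitrary semilength $n$, where the excess is $n-k$ and the up-run profiles are enumerated by Callan's generalized Narayana numbers, and specialize to $n=2k+1$ to recover \eqref{eq:relNW}; combined with the Narayana symmetry $N_{k,m}=N_{k,k+1-m}$ this also reproves Theorem~\ref{th:main}.
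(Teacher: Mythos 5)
Your proposal is correct in substance, but it takes a genuinely different route from the paper's own proof of Theorem~\ref{th:relationNandW}. The paper never touches rise compositions in Section~\ref{combinatorial_proofs}: it converts paths to plane trees (Proposition~\ref{prop:reformulation}), decomposes each tree into a necklace of \emph{extended leaves}, proves that cyclic compositions of $2k+1$ into $k$ parts with $m$ parts $\geq 2$ are counted by $N_{k,m}$ via a cycle-lemma bijection with Dyck paths of semilength $k$ (Lemma~\ref{lemma:part1_path}), and then shows each necklace supports exactly $\binom{2k+1}{k-1}$ trees by identifying trees with \emph{markings} of $k-1$ of the $2k+1$ non-leaf vertices, the assembly order being dictated by Corollary~\ref{cor:reverse_cycle} applied to the word of marks and extended leaves. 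You instead stay on Dyck words, group them by the cyclic class of the rise composition, count the linear profiles directly (using $\binom{k}{m}\binom{k}{m-1}=kN_{k,m}$), and apply the cycle lemma to the word augmented by one extra down step. This is essentially the paper's \emph{other} combinatorial argument---the Section~\ref{s-generalized} proof of Theorem~\ref{t-explicit} via Lemma~\ref{lemma:risecompsize}---specialized to $n=2k+1$, except that the paper proves Lemma~\ref{lemma:risecompsize} by passing back through trees and markings, whereas you propose a direct cycle-lemma proof on words. What the paper's heavier machinery buys is the explicit symmetry bijection (Definition~\ref{dfn:bijection}) and the bookkeeping for non-primitive classes (Lemma~\ref{lemma:markednecksize}) needed for general $n$; what yours buys is brevity and directness, at the cost that $N_{k,m}$ enters as a binomial formula rather than as a count of Catalan objects.

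The one step you flag only as an expectation does work, and closes as follows. Fix a class $[\mu]$; it is primitive because $\gcd(k,2k+1)=1$, so it has exactly $k$ distinct linear representatives (this same fact is what legitimizes ``number of classes $=kN_{k,m}/k=N_{k,m}$''). By primitivity, cyclic words whose up-runs form $[\mu]$ and which carry $2k+2$ down steps in $k$ runs are in bijection with compositions of $2k+2$ into $k$ positive parts, of which there are exactly $\binom{2k+1}{k-1}$. By Corollary~\ref{cor:reverse_cycle} (applied with $2k+1$ copies of $U$ and $2k+2$ copies of $D$), each such cyclic word has a unique rotation with no proper prefix having more $D$s than $U$s; that rotation is forced to have the form $PD$ with $P$ a Dyck path, and the cut falls immediately after a $D$, i.e.\ at the start of an up-run, so the rise composition of $P$ is a rotation of $\mu$ and $m$ is preserved---this disposes of your worry that rotation destroys $k$ or $m$, and the ``minimum attained more than once'' issue is exactly what the augmentation eliminates. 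The inverse map sends $P$ to the cyclic closure of $PD$ (appending $D$ to a word already ending in $D$ neither creates nor destroys $UUD$-factors). Hence each of the $N_{k,m}$ classes carries exactly $\binom{2k+1}{k-1}$ paths, and summing gives \eqref{eq:relNW}.
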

Observe that Theorem \ref{th:main} follows immediately from Theorem \ref{th:relationNandW} and the symmetry of the Narayana numbers. 
Our combinatorial proof of Theorem \ref{th:relationNandW} requires heavy use of the cycle lemma.
We show that analogous statements to Theorem~\ref{th:main} and Theorem~\ref{th:relationNandW} hold for the numbers $w_{2k-1,k,m}$. We also generalize Theorem \ref{th:relationNandW} to a relationship between the $w_{n,k,m}$ and a family of generalized Narayana numbers introduced by Callan ~\cite{callan} (see Theorem~\ref{thm:simplify}). 

Using the ideas in the combinatorial proofs of Theorem~\ref{th:main} and Theorem~\ref{t-explicit}, we establish the following more general statements for $w_{n, k_{1}, k_{2}, \ldots, k_{r}}$, defined to be the number of Dyck paths of semilength $n$ with $k_{1}$ $UD$-factors, $k_{2}$ $UUD$-factors,  \dots , and $k_{r}$ $U^{r}D$-factors. 

\begin{thm} \label{c-evenmoretantalizing}
For all $r\geq 2$ and $1\leq m \leq k$, taking $k_1 = k_2 = \ldots = k_{r-1} = k$, we have
\begin{equation*}
w_{rk+1, k, k, \ldots, k, m} = w_{rk+1, k, k, \ldots, k, k+1-m}
\end{equation*}
and
\begin{equation*}
w_{rk-1, k, k, \ldots, k, m} = w_{rk-1, k, k, \ldots, k, k-m}.
\end{equation*}
\end{thm}

\noindent Note that $r=2$ recovers the symmetry of the numbers $w_{2k\pm 1,k,m}$ (our Theorems~\ref{th:main} and \ref{th:relationNandW2}). Observe also that in terms of rise runs, Theorem~\ref{c-evenmoretantalizing} describes a symmetry in the case when all rise runs in the considered Dyck paths are of length at least $r-1$.

\begin{thm}\label{c-wr}
Let $r\geq 1$, $k_1 \geq k_2 \geq \cdots \geq k_r \geq 0$, and $n \geq k_1 + k_2 + \cdots + k_r$. 
Take $\hat{k} = k_1 + k_2 + \cdots + k_{r-1}$. Then
\begin{multline*}
w_{n,k_1,k_2, \ldots, k_r} = \\
\begin{cases}
    \displaystyle{\frac{1}{k_{1}}\binom{n}{k_{1}-1} \binom{n-\hat{k}-1}{k_r-1} \binom{k_{1}}{k_1-k_2, k_2-k_3, \ldots, k_{r-1}-k_{r}, k_r }}, & \text{ if } k_r>0, \\[10bp]
    \displaystyle{\frac{1}{k_{1}}\binom{n}{k_{1}-1}\binom{k_{1}}{k_1-k_2, k_2-k_3, \ldots, k_{r-1}-k_{r}, k_r }}, & \text{ if } k_{r} = 0 \text{ and } n = \hat{k}, \\[2bp]
    0, & \text{ otherwise.}
\end{cases}
\end{multline*}
\end{thm}

\noindent Theorem \ref{c-wr} specializes to the formula for Narayana numbers upon setting $r=1$ and to Theorem~\ref{t-explicit} for $r=2$.

We note that the numbers $w_{n,k,m}$ have been independently studied under different guises in the Ph.D.\ theses of Wang and Lemus-Vidales.
Wang \cite[Theorem 2]{wang} gave a formula for the joint distribution of $UU$-factors and $UUD$-factors over Dyck paths of semilength $n$. 
Wang's formula is equivalent to our formula in Theorem \ref{t-explicit}, as a Dyck path of semilength $n$ with $k$ occurrences of $UD$ has $n-k$ occurrences of $UU$. 
Moreover, Lemus-Vidales \cite[Theorem 3.1.3]{lemus} gave an analogous formula for counting Dyck paths by ``short peaks'' (i.e., $UD$-factors not preceded by a $U$), $UUU$-factors, and $UUD$-factors. 
It is shown in \cite[Lemma 3.1.1]{lemus} that the number of $UD$-factors of a Dyck path is the sum of its number of short peaks and $UUD$-factors and that the semilength is equal to the sum of its number of short peaks, $UUU$-factors, and twice its number of $UUD$-factors. 
Hence, Theorem \ref{t-explicit} can be recovered from Lemus-Vidales's formula.
Though Wang and Lemus-Vidales' results provide alternate routes to obtain some of our results, their formulas are obtained by deriving an appropriate functional equation and applying Lagrange inversion; neither of them had a combinatorial proof.
One more relevant and recent result is by Fu and Yang \cite{fu2025group}, who found that $w_{n,k,m}$ also counts plane trees with $n$ edges, $k$ internal nodes, and $m$ internal nodes with degree larger than one.

This paper is structured as follows.\footnote{An algebraic proof (using generating functions) for the explicit formula in Theorem \ref{t-explicit} can be found in the Appendix. } 
\begin{itemize}
    
    \item Section \ref{preliminaries} reviews background material needed for our combinatorial proofs, including Dyck paths and cyclic compositions.
    
   \item In Section \ref{combinatorial_proofs}, we provide combinatorial proofs of Theorem ~\ref{t-explicit} and Theorem ~\ref{th:relationNandW} as well as a variant of Theorem ~\ref{th:relationNandW} for the numbers $w_{2k-1,k,m}$. In addition, we show that these proofs can be generalized to prove Theorem ~\ref{c-evenmoretantalizing} and Theorem ~\ref{c-wr}.

   \item In Section \ref{s-generalized}, we establish a more general formula relating the numbers $w_{n,k,m}$ to the generalized Narayana numbers. 

\item We conclude in Section \ref{polynomials} with some investigation of the polynomials \[W_{n,k}(t)= \sum_{m=0}^k w_{n,k,m} t^m,\] including real-rootedness and $\gamma$-positivity results, as well as a symmetric decomposition.
Several conjectures on the polynomials $W_{n,k}(t)$ and their symmetric decomposition are given.
\end{itemize}



\section{Preliminaries}\label{preliminaries}

Here, we introduce the background material necessary for the forthcoming combinatorial proofs.

\subsection{Dyck paths}

Recall that a \emph{Dyck path} of \emph{semilength} $n$ is a path in $\mathbb{Z}^2$ that begins at the origin, ends at $(2n,0)$, never goes below the horizontal axis, and consists of a sequence of \emph{up steps} $(1,1)$ and \emph{down steps} $(1,-1)$. 
We can represent Dyck paths as \emph{Dyck words}: words $\pi$ on the alphabet $\{U,D\}$ with the same number of $U$'s and $D$'s, such that there are never more $D$'s than $U$'s in any prefix of $\pi$. 
When we refer to a $UD$- or $UUD$-factor in a Dyck path, we really mean a factor in the corresponding Dyck word.


\subsection{Cyclic compositions and the cycle lemma}

Given a sequence $p = p_1 p_2 \cdots p_n$, we say that a sequence $p^\prime$ is a \emph{cyclic shift} (or \emph{cyclic rotation}) of $p$ if $p^\prime$ is of the form
\[p^\prime = p_i p_{i+1} \cdots p_n p_1 p_2 \cdots p_{i-1},\]
for some $1 \leq i \leq n$.
Let us write $p \sim p^\prime$ whenever $p$ and $p^\prime$ are cyclic shifts of each other.

Let $\comp_{n,k}$ denote the set of all compositions of $n$ into $k$ parts, i.e., a sequence of $k$ positive integers whose sum is $n$.
We define a \emph{cyclic composition} $[\mu]$ to be the equivalence class of a composition $\mu$ under cyclic shift.
Let $\ccomp_{n,k}$ be the set of cyclic compositions consisting of compositions of $n$ into $k$ parts, which is well-defined because the number of parts of a composition and the sum of its parts are clearly invariant under cyclic shift. We define the \emph{order} of a cyclic composition $[\mu]$, denoted by $\ord[\mu]$, to be the number of representatives of $[\mu]$---that is, the number of distinct compositions that can be obtained from cyclically shifting $\mu$.
Note that applying $\ord[\mu]$ cyclic shifts to $\mu$ will return back $\mu$.
If $[\mu] \in \ccomp_{n,k}$ has order $k$, then we say that $[\mu]$ is \emph{primitive}. 

For any $[\mu] \in \ccomp_{n,k}$, there exists a positive integer $d$ dividing both $n$ and $k$ such that $[\mu]$ is a \emph{concatenation} of $d$ copies of a primitive cyclic composition $[\nu] \in \ccomp_{n/d,k/d}$, which means that there exists $\bar{\nu}\in[\nu]$ for which $\mu$ is a concatenation of $d$ copies of $\bar{\nu}$.
In this case, $\ord[\mu]=k/d=\ord[\nu]$. 
(If $d=1$, then $[\mu]$ itself is primitive and is a concatenation of itself.) 
For example, the cyclic composition $[1,2,1,1,2,1]$ is a concatenation of two copies of the primitive cyclic composition $[1,2,1]$, and both of these cyclic compositions have order 3.
Observe that this decomposition of cyclic compositions into primitive cyclic compositions is unique.

\begin{lemma}\label{lemma:2k+1_primitive}
If $n$ and $k$ are relatively prime, then $[\mu] \in \ccomp_{n,k}$ is primitive.
\end{lemma}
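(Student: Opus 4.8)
The plan is to prove the contrapositive by showing that a non-primitive cyclic composition forces $\gcd(n,k)>1$. Suppose $[\mu]\in\ccomp_{n,k}$ is not primitive, so $\ord[\mu]=k/d$ for some divisor $d>1$ of $k$. By the decomposition discussion preceding the lemma, $[\mu]$ is a concatenation of $d$ copies of a primitive cyclic composition $[\nu]\in\ccomp_{n/d,k/d}$. In particular $d$ divides $n$ as well (since the underlying composition $\bar\nu$ of $n/d$ into $k/d$ parts must have integer part-sum $n/d$), so $d$ is a common divisor of $n$ and $k$ exceeding $1$, contradicting $\gcd(n,k)=1$. Hence $[\mu]$ must be primitive.

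Alternatively, I would argue directly. Let $r=\ord[\mu]$ be the order of $[\mu]$, and let $\mu=\mu_1\mu_2\cdots\mu_k$ be a representative. By definition $r$ is the least positive integer such that cyclically shifting $\mu$ by $r$ positions returns $\mu$; the set of all shifts fixing $\mu$ forms a subgroup of $\mathbb{Z}/k\mathbb{Z}$, so $r$ divides $k$. Writing $d=k/r$, the periodicity means $\mu$ is the concatenation of $d$ copies of its initial block $\mu_1\cdots\mu_r$. Summing the parts, the sum of each block is $n/d$, which must be a positive integer, so $d$ divides $n$. Since $\gcd(n,k)=1$, the only common divisor is $d=1$, forcing $r=k$ and hence $[\mu]$ primitive.

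The key steps, in order, are: first, recall that $\ord[\mu]$ divides $k$ (this is the standard fact that the stabilizer under the cyclic group action is a subgroup, giving a period dividing $k$); second, observe that the common period $d=k/\ord[\mu]$ partitions the parts of $\mu$ into $d$ equal-sum blocks, forcing $d\mid n$; third, invoke $\gcd(n,k)=1$ to conclude $d=1$. Each step is elementary and essentially already packaged in the paragraph on primitive decompositions immediately before the lemma statement.

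I do not anticipate a serious obstacle here, as the result is a direct consequence of the group-theoretic fact that the order of a cyclic composition divides the number of parts, combined with the periodicity forcing the same divisor onto the sum $n$. The only point requiring care is justifying cleanly that $\ord[\mu]\mid k$ and that this divisor also divides $n$; both follow from the uniqueness of the primitive decomposition already established in the text, so I would simply cite that decomposition rather than re-derive the divisibility from scratch.
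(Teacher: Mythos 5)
Your proof is correct and follows essentially the same route as the paper: both invoke the unique decomposition of $[\mu]$ into $d$ copies of a primitive cyclic composition, note that $d$ is a common divisor of $n$ and $k$, and conclude $d=1$ from coprimality. Your contrapositive framing and the optional group-theoretic re-derivation of the divisibility facts are equivalent to, not genuinely different from, the paper's direct two-line argument.
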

\begin{proof}
Let $[\mu] \in \ccomp_{n,k}$. 
Then $[\mu]$ can be uniquely decomposed as a concatenation of $d$ copies of a primitive cyclic composition, where $d$ is a common divisor of $n$ and $k$.
Since $n$ and $k$ are relatively prime, it follows that $d=1$, whence it follows that $[\mu]$ itself is primitive.
\end{proof}

The cycle lemma will play an important role in our proofs.
Given a positive integer $k$ and a sequence $p = p_{1}p_{2}\cdots p_{l}$ consisting only of $U$'s and $D$'s, we say that $p$ is \emph{$k$-dominating} if every prefix of $p$ (i.e.,  every sequence $p_{1}p_{2}\cdots p_{i}$ where $1 \leq i \leq l$) has more copies of $U$ than $k$ times the number of copies of $D$.

\begin{lemma}[Cycle lemma \cite{dvoretzky-motzkin}]
\label{lemma:cycle}
Let $k$ be a positive integer.
For any sequence $p = p_{1}p_{2}\cdots p_{m+n}$ consisting of $m$ copies of $U$ and $n$ copies of $D$, there are exactly $\text{max}(0,m-kn)$ cyclic shifts of $p$ that are $k$-dominating.
\end{lemma}

We refer to \cite{dershowit-zaks, dvoretzky-motzkin} for a proof of the cycle lemma as well as some applications.
We note that Raney \cite{raney} showed that the cycle lemma is equivalent to the Lagrange inversion formula; Raney's proof was later generalized to the multivariate case by Bacher and Schaeffer \cite{bacher-schaeffer}.

\begin{cor}[of the cycle lemma]
\label{cor:reverse_cycle}
Any sequence of $k$ copies of $\bigcirc$ and $k+1$ copies of $\square$ has exactly one cyclic shift with no proper prefix having more $\square$s than $\bigcirc$s.
\end{cor}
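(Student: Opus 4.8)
The plan is to reduce the statement to the cycle lemma (Lemma \ref{lemma:cycle}) applied with parameter $1$, after a reversal that converts the non-strict ``no proper prefix'' condition into the strict ``dominating'' condition of the lemma. Throughout, I would regard a given arrangement of $k$ copies of $\bigcirc$ and $k+1$ copies of $\square$ as a word $p=p_1 p_2\cdots p_{2k+1}$, and for any factor $q$ of $p$ let $c(q)$ and $s(q)$ denote the numbers of $\bigcirc$s and $\square$s occurring in $q$.

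First I would record a complementary-counting identity. For a proper prefix $q=p_1\cdots p_i$ with $1\le i\le 2k$, the complementary suffix $q^c=p_{i+1}\cdots p_{2k+1}$ satisfies $c(q^c)=k-c(q)$ and $s(q^c)=(k+1)-s(q)$, whence $s(q^c)-c(q^c)=1+\bigl(c(q)-s(q)\bigr)$. Since all these quantities are integers, $c(q)\ge s(q)$ holds if and only if $s(q^c)>c(q^c)$ holds strictly. As $i$ ranges over $1,\dots,2k$, the suffix $q^c$ ranges over all proper suffixes of $p$, so $p$ has no proper prefix with more $\square$s than $\bigcirc$s if and only if every proper suffix of $p$ has strictly more $\square$s than $\bigcirc$s. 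Because the whole word satisfies $s(p)=k+1>k=c(p)$, the full word trivially meets the strict inequality, so this is the same as requiring that \emph{every} suffix of $p$ (proper or not) have more $\square$s than $\bigcirc$s.

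Next I would reverse and translate. Let $\tilde p=p_{2k+1}p_{2k}\cdots p_1$, and encode symbols by $\square\mapsto U$ and $\bigcirc\mapsto D$. Reversal carries suffixes of $p$ to prefixes of $\tilde p$ with identical symbol counts, so the condition above becomes: every prefix of $\tilde p$ has more $U$s than $D$s, i.e.\ $\tilde p$ is $1$-dominating in the sense of Lemma \ref{lemma:cycle}. The word $\tilde p$ has $m=k+1$ copies of $U$ and $n=k$ copies of $D$, so the cycle lemma with parameter $1$ yields exactly $m-1\cdot n=1$ cyclic shift of $\tilde p$ that is $1$-dominating.

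Finally I would transfer the count back using the fact that reversal is a bijection between the cyclic shifts of $p$ and those of $\tilde p$: the reversal of a cyclic shift of $p$ is again a cyclic shift of $\tilde p$, and this correspondence is invertible. Every cyclic shift $p'$ of $p$ is itself an arrangement of $k$ copies of $\bigcirc$ and $k+1$ copies of $\square$, so the equivalence from the second paragraph applies to $p'$ verbatim: $p'$ has the desired ``no proper prefix'' property exactly when $\widetilde{p'}$ is $1$-dominating. Hence the number of good cyclic shifts of $p$ equals the number of $1$-dominating cyclic shifts of $\tilde p$, which is $1$. The step to get right is the passage between the non-strict prefix inequality of the corollary and the strict dominating inequality required by the cycle lemma; the complementary-counting identity, together with the observation that the full word always dominates, is exactly what renders the proper-prefix boundary harmless, while the reversal is what lets the resulting suffix condition be read off as a prefix (dominating) condition.
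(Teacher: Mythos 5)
Your proof is correct and follows essentially the same route as the paper's: reverse the word, identify $\square$ with $U$ and $\bigcirc$ with $D$, apply the cycle lemma with parameter $1$ to get exactly one $1$-dominating cyclic shift, and transfer back via the reversal bijection on cyclic shifts. The only difference is that you spell out the complementary-counting equivalence between the non-strict proper-prefix condition and the strict suffix condition, which the paper leaves implicit.
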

\begin{proof}

Given any sequence $\lambda$ of $k$ copies of $\bigcirc$ and $k+1$ copies of $\square$, let $\tilde{\lambda}$ be the reverse sequence of $\lambda$---that is, the sequence consisting of the entries of $\lambda$ but in reverse order.
The cycle lemma guarantees that there is exactly one cyclic shift of $\tilde{\lambda}$ that is $1$-dominating. 
The reverse sequence of this $1$-dominating cyclic shift is the cyclic shift of $\lambda$ that has no proper prefix having more $\square$s than $\bigcirc$s.
\end{proof}


\section{Combinatorial proof of formulas for \texorpdfstring{$w_{n,k,m}$}{w n,k,m}} \label{combinatorial_proofs}

\subsection{Combinatorial proof of Theorem ~\ref{t-explicit}}
We will give a combinatorial proof for the explicit formula of the numbers $w_{n,k,m}$ stated in Theorem \ref{t-explicit}.
This proof will require the notion of ``extended peaks" and the decomposition of a Dyck word into ``extended peaks".

\begin{dfn}
An \emph{extended peak} is a word in the alphabet $\{U,D\}$ consisting of a nonempty sequence of $U$'s followed by one $D$. 
Given an extended peak $S = U^aD$, define its \emph{up-length}, denoted by $\ell(S)$, as $\ell(S)\coloneqq a$.
\end{dfn}

We define a \emph{necklace of extended peaks} (or simply a \emph{necklace}) to be the equivalence class of a sequence of extended peaks under cyclic shift. 
Often it is more convenient for us to view a necklace as simply a collection of extended peaks with a given cyclic order; it will be clear from context when we do so.
Let $\neck_{n,k}$ denote the set of all necklaces with $k$ extended peaks and a total of $n$ $U$'s. 

Let $\psi$ be the map taking a composition $(\mu_1,\mu_2,\dots,\mu_k)$ of $n$ to the sequence $S_1 S_2\cdots S_k$ of extended peaks where $\ell(S_i)=\mu_i$ for each $i$. 
Note that $\psi$ is a bijection between compositions of $n$ with $k$ parts and sequences of $k$ extended peaks with a total of $n$ $U$'s; moreover, $\psi$ induces a bijection---which we also denote $\psi$ by a slight abuse of notation---from $\ccomp_{n,k}$ to $\neck_{n,k}$. 
To be precise, the necklace $\psi[\mu]$ is the equivalence class of $\psi(\bar{\mu})$ for any $\bar{\mu} \in [\mu]$, which clearly does not depend on the choice of representative. 

We define a \emph{marking} of a necklace of extended peaks $[S_{1}, \ldots, S_{k}]$ to be the necklace $[S_{1}, \ldots, S_{k}]$ with $k-1$ $U$'s marked. 
Given $[\mu] \in \ccomp_{n,k}$, let $\mneck[\mu]$ be the set of all marked necklaces of extended peaks corresponding to the cyclic composition $[\mu]$.
If $[\mu]$ is primitive---that is, if $\ord[\mu]=k$---then observe that the necklace $\psi[\mu]$ has $\binom{n}{k-1}$ distinct markings. 
More generally, we have the following:

\begin{lemma} \label{lemma:markednecksize}
Given $[\mu] \in \ccomp_{n,k}$, we have
\begin{equation*}
    \lvert \mneck[\mu] \rvert = \frac{\ord[\mu]}{k}\binom{n}{k-1}.
\end{equation*}
\end{lemma}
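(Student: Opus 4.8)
The plan is to realize $\mneck[\mu]$ as the orbit set of a cyclic group action on a set of ``linearized'' marked objects, and then to show that this action is free. Concretely, let $X$ denote the set of marked \emph{sequences} of extended leaves whose underlying composition lies in $[\mu]$; that is, an element of $X$ is a representative composition $\bar\mu\in[\mu]$ together with a choice of $k-1$ of the $n$ non-leaf vertices of $\psi(\bar\mu)$ to mark. Since $[\mu]$ has exactly $\ord[\mu]$ distinct representatives and each gives $\binom{n}{k-1}$ markings (all distinct as linear objects), I would first record that $\lvert X\rvert=\ord[\mu]\binom{n}{k-1}$. The cyclic group $C_k$ of rotations of a $k$-term sequence acts on $X$ by simultaneously rotating the extended leaves and their marks, and by definition the orbits of this action are precisely the elements of $\mneck[\mu]$.

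The crux is to prove that $C_k$ acts \textbf{freely} on $X$, for then every orbit has size $k$ and hence $\lvert\mneck[\mu]\rvert=\lvert X\rvert/k=\frac{\ord[\mu]}{k}\binom{n}{k-1}$, as desired. Suppose a rotation $\rho^j$ with $1\le j\le k-1$ fixes some marked object. In particular it fixes the underlying composition, so $\rho^j$ lies in the stabilizer of $\bar\mu$; writing $d=k/\ord[\mu]$ for the number of copies in the primitive decomposition of $[\mu]$, this stabilizer is cyclic of order $d$, generated by rotation through $\ord[\mu]$ parts, equivalently through $n/d$ of the non-leaf vertices. I would then examine the induced action on the $n$ non-leaf vertices arranged on a cycle: a nontrivial element of this stabilizer acts on $\mathbb{Z}/n$ by a translation whose orbits all have a common size $e>1$ dividing $d$, so any fixed set of marked vertices must be a union of such orbits and therefore have cardinality divisible by $e$.

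The contradiction---and the heart of the argument---is that we are marking exactly $k-1$ vertices. Since $e\mid d\mid k$ we have $k\equiv 0\pmod e$, whence $k-1\equiv-1\pmod e$; as $e>1$ this forces $e\nmid(k-1)$, so no $(k-1)$-subset of marks can be invariant under $\rho^j$. Thus $\rho^j$ fixes no element of $X$ for $j\neq 0$, the action is free, and the count follows. I expect this final divisibility observation to be the main obstacle conceptually: it is precisely the ``off by one'' in marking $k-1$ rather than $k$ vertices that destroys all nontrivial cyclic symmetry, and it also explains why the primitive case invoked earlier (where $d=1$) is just the clean special case $\lvert\mneck[\mu]\rvert=\binom{n}{k-1}$. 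Equivalently, one could phrase the last two paragraphs as a Burnside's lemma computation in which every non-identity rotation contributes zero fixed points.
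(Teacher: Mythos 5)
Your proof is correct, and it follows the same counting skeleton as the paper's argument---linearize the necklace, count markings of linear representatives, and divide by the redundancy $k/\ord[\mu]$---but it supplies a justification that the paper leaves implicit. The paper fixes one representative $E_1E_2\cdots E_k$, counts its $\binom{n}{k-1}$ markings, and then asserts (``observe that each cyclic shift appears $k/\ord[\mu]$ times; accordingly\dots'') that each marked necklace in $\mneck[\mu]$ arises from exactly $k/\ord[\mu]$ of these markings. That asserted multiplicity is precisely the statement that no marked sequence is fixed by a nontrivial rotation, i.e.\ the freeness of your $C_k$-action, and the paper never proves it. Your divisibility argument is exactly the missing step: a nontrivial element of the stabilizer of $\bar{\mu}$ translates the $n$ non-leaf vertices with all orbits of a common size $e>1$ dividing $d$ (hence dividing $k$), so an invariant marking would have cardinality divisible by $e$, contradicting $k-1\equiv -1 \pmod{e}$. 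This pinpoints why marking $k-1$ rather than $k$ vertices destroys all cyclic symmetry, and it makes the primitive case invoked earlier in the paper (where $d=1$) a transparent special case; your Burnside rephrasing is an equivalent packaging of the same fact. In short, the paper's version buys brevity, while yours buys rigor at the one point where the paper's proof actually has something to check.
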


\begin{proof}
Let $N$ denote the necklace of extended peaks corresponding to $[\mu]$, and fix a sequence $S_1 S_2 \cdots S_k \in N$ of extended peaks. 
Then there are $\binom{n}{k-1}$ ways to choose the $k-1$ $U$'s to be marked in $S_1 S_2 \cdots S_k$. 
Upon taking all $k$ cyclic shifts of $S_1 S_2 \cdots S_k$, observe that each cyclic shift appears $k/\ord[\mu]$ times; accordingly, each of the markings counted by $\binom{n}{k-1}$ is $k/\ord[\mu]$ times the number of markings in $\mneck[\mu]$.
In other words, we have 
\begin{equation*}
    \frac{k}{\ord[\mu]} \lvert \mneck[\mu] \rvert = \binom{n}{k-1},
\end{equation*}
which is equivalent to our desired conclusion.
\end{proof}

A Dyck word $\pi = \pi_{1} \cdots \pi_{2n}$ with exactly $k$ $UD$-factors can be expressed uniquely in the form $\pi = U^{a_{1}}D^{b_{1}} \cdots U^{a_{k}}D^{b_{k}}$, where $(a_{1}, \ldots, a_{k})$ and $(b_{1}, \ldots, b_{k})$ are both compositions of $n$.
Let us call $(a_{1}, \ldots, a_{k})$ the \emph{rise composition} of $\pi$.
Given a cyclic composition $[\mu]$, denote by $\mathsf{D}[\mu]$ the set of all Dyck words with rise composition in the equivalence class $[\mu]$.

\begin{lemma} \label{lemma:risecompsize}
Given $[\mu] \in \ccomp_{n,k}$, we have
\begin{equation}
    \lvert \mathsf{D}[\mu] \rvert = \frac{\ord[\mu]}{k}\binom{n}{k-1}.
\end{equation}
\end{lemma}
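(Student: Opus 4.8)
The plan is to transport the count to plane trees through the bijection $\Phi$ of Proposition~\ref{prop:reformulation}, and then to recycle the bijection between markings and plane trees that is implicit in the proof of Lemma~\ref{lemma:part2_trees}, finishing with Lemma~\ref{lemma:markednecksize}. The linchpin is a compatibility statement: under $\Phi$, the rise composition of a Dyck word coincides with the sequence $(\ell(E_1),\dots,\ell(E_k))$ of extended-leaf lengths of the corresponding plane tree, read in leaf order. Granting this, a Dyck word $\pi$ lies in $\mathsf{D}[\mu]$ exactly when its rise composition is a cyclic shift of $\mu$, equivalently when the extended leaf decomposition of $\Phi(\pi)$ belongs to the necklace $\psi[\mu]\in\neck_{n,k}$. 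Hence $\Phi$ restricts to a bijection from $\mathsf{D}[\mu]$ onto the set of plane trees whose extended leaf decomposition belongs to $\psi[\mu]$, and it remains only to enumerate the latter.

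To establish the compatibility statement, I would analyze the non-recursive description of $\Phi^{-1}$: perform a preorder traversal, recording $U$ when descending to a child and $D$ when climbing back to a parent. A maximal run $U^{a_i}$ is then a maximal block of descents, which runs from a branching vertex down to the $i$th leaf $v_i$ (in left-to-right order); after its first step, every subsequent step of the run descends to a leftmost child. Reading $E_i$ from $v_i$ upward, the first vertex possessing a child strictly to the left of this chain is precisely the branching vertex at which the run began (or the root, when $i=1$), which is exactly where $E_i$ terminates. Thus the chain swept out by $U^{a_i}$ is $E_i$, giving $a_i=\ell(E_i)$ and proving the claim.

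For the enumeration, I would observe that the two mutually inverse procedures in the proof of Lemma~\ref{lemma:part2_trees}---building a tree from a marked necklace, and recovering the marking by the detaching process---never use the hypotheses $n=2k+1$ or that $[\mu]$ is primitive; these entered only at the very end, to count the markings as $\binom{2k+1}{k-1}$. Consequently that same argument furnishes, for an arbitrary $[\mu]\in\ccomp_{n,k}$, a bijection between $\mneck[\mu]$ and the plane trees whose extended leaf decomposition belongs to $\psi[\mu]$. Composing with the restriction of $\Phi$ from the first paragraph yields $\lvert \mathsf{D}[\mu]\rvert=\lvert \mneck[\mu]\rvert$, and Lemma~\ref{lemma:markednecksize} evaluates the right-hand side as $\tfrac{\ord[\mu]}{k}\binom{n}{k-1}$, as desired.

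The main obstacle is the compatibility statement of the second paragraph: although intuitively clear, it must be argued with care for the first up-run (whose branch point is the root) and to confirm that passing to cyclic shifts on the Dyck-word side matches passing to the necklace on the tree side. A secondary point is to verify explicitly that the Lemma~\ref{lemma:part2_trees} bijection is insensitive to primitivity; once this is checked, no new appeal to the cycle lemma is needed. I briefly considered a self-contained alternative that enumerates $\mathsf{D}[\mu]$ directly, viewing a Dyck word as a cyclic sequence of up-run/down-run blocks and applying the cycle lemma to its block rotations, but this route is messier precisely when $[\mu]$ is imprimitive, since the governing lattice path then attains its minimal valley height more than once.
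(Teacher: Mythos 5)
Your proposal is correct and follows essentially the same route as the paper: the paper's (very terse) proof likewise transports $\mathsf{D}[\mu]$ to plane trees via the interpretation of Proposition~\ref{prop:reformulation}, reuses the marked-necklace bijection from Lemma~\ref{lemma:part2_trees}, and then invokes Lemma~\ref{lemma:markednecksize}. Your write-up simply makes explicit the two points the paper glosses over---that rise compositions correspond to extended-leaf lengths under $\Phi$, and that the Lemma~\ref{lemma:part2_trees} bijection never uses primitivity (which holds because the $k-1$ marks and $k$ extended leaves are coprime in number, so marked necklaces are aperiodic)---and both of these checks are sound.
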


\begin{proof}
From Lemma ~\ref{lemma:markednecksize}, it suffices to find a bijection from $\mneck[\mu]$ to $D[\mu]$. 

Let $[S] = [S_1, S_2, \ldots, S_k] \in \mneck[\mu]$. We will choose a unique ordering of the marked extended peaks $S_1, S_2, \ldots, S_k$. To do this, we first record the $k-1$ marked $U$'s and $k$ extended peaks using a sequence of $\bigcirc$'s and $\square$'s as follows. Starting with any extended peak $S_i$, record a $\bigcirc$ for each marked $U$ on this extended peak, and then record a $\square$ for this extended peak. Repeat this procedure for the next extended peak in the cyclic order until all extended peaks and their markings have been recorded.

This gives a sequence consisting of $k-1$ copies of $\bigcirc$ and $k$ copies of $\square$. It then follows from Corollary ~\ref{cor:reverse_cycle} that there is exactly one cyclic shift $\sigma = \sigma_1 \sigma_2 \ldots \sigma_{2k-1}$ of this sequence where every proper prefix of $\sigma$ has at least as many $\bigcirc$'s as the number of $\square$'s. Note that $\sigma_1 = \bigcirc$ and $\sigma_{2k-2}\sigma_{2k-1} = \square\square$. Using $\sigma$, we obtain a unique ordering $R_1, R_2, \ldots, R_k$ of the marked extended peaks of $[S]$ by taking $R_i$ to be the extended peak corresponding to the $i$th $\bigcirc$ in $\sigma$, for every $i\in [k]$. As this choice of ordering does not depend on the representative of $[S]$ chosen, the ordering $R_1, R_2, \ldots, R_k$ of the marked extended peaks in $[S]$ is well-defined. 

For example, consider the representative $(\color{red}\underline{U}\color{black}U\color{red}\underline{U}\color{black}D,UD,UUUD,\color{red}\underline{U}\color{black}D)$ in the marked necklace $[\color{red}\underline{U}\color{black}U\color{red}\underline{U}\color{black}D,UD,UUUD,\color{red}\underline{U}\color{black}D] \in \mneck[3,1,3,1]$. This is associated to the sequence $\bigcirc \bigcirc \square \square \square \bigcirc \square$ with only the cyclic shift $\bigcirc \square\bigcirc \bigcirc \square \square \square$ having at least as many $\bigcirc$'s as $\square$'s in every proper prefix.  Thus, $R_1 = \color{red}\underline{U}\color{black}D$, $R_2=\color{red}\underline{U}\color{black}U\color{red}\underline{U}\color{black}D$, $R_3 = UD$, and $R_4 = UUUD$.

Given the sequence of extended peaks $R_1, R_2, \ldots, R_k$, we construct a Dyck word $w$ in the following manner.
\begin{enumerate}
    \item Set $w = U^{\ell(R_1)}$ and mark the corresponding $U$'s that are marked in $R_1$.
    \item For the next $R_i$ that has not been appended to $w$, find the rightmost marked $U$ in $w$, and let its position in $w$ be $j$.
    Let $c$ be the number of $U$'s from $w_j$ to the end of $w$ inclusive that are not paired with a $D$ to their right. 
    Append $c$ copies of $D$ to the end of $w$ followed by $U^{\ell(R_i)}$.
    \item Mark the corresponding $U$'s in $w$ that were marked in $R_i$ and unmark $w_j$ in $w$.
    \item Repeat steps $(2)$ and $(3)$ until we have appended all $k$ extended peaks.
    \item Append enough $D$'s to the end of $w$ such that $w$ is a word of length $2n$.
\end{enumerate}
For example, given $R_1 = \color{red}\underline{U}\color{black}D$, $R_2=\color{red}\underline{U}\color{black}U\color{red}\underline{U}\color{black}D$, $R_3 = UD$, and $R_4 = UUUD$, the associated word $w$ is $UDUUUDUDDDUUUDDD$.

Note that there will always be at least one marked $U$ to indicate how many $D$'s must be added before the next extended peak because the number of marked $U$'s (the $\bigcirc$'s) will always be at least the number of extended peaks (the $\square$'s) that need attaching by the choice of ordering. Moreover, $w$ is a Dyck word as at every point in the construction of $w$, the number of $D$'s added are weakly less than the number of $U$'s present in $w$. The rise composition of $w$ is given by $(\ell(R_1), \ell(R_2), \ldots, \ell(R_k))$ which by construction is cyclically equivalent to $\mu$. Thus, $w$ is in $\mathsf{D}[\mu]$.

Conversely, consider a Dyck word $\pi = U^{a_1}D^{b_1} \cdots U^{a_k}D^{b_k}$ whose rise composition $(a_1,a_2,\ldots,a_k)$ is contained in the equivalence class $[\mu]$. We decompose $\pi$ into a marked sequence of extended peaks $(R_1, R_2, \ldots, R_k)$ as follows.
\begin{enumerate}
    \item Set $R_k = U^{a_k}D$ with no markings and set $w$ to be the word $\pi$ with $U^{a_k}D^{b_k}$ deleted from its end.
    \item For the largest $1 \leq i < k$ such that $R_i$ is not defined, delete $D^{b_i}$ from the end of $w$. Mark the $b_i$th rightmost $U$ of $w$ that is not paired with a $D$ in $w$. 
    \item Set $R_i$ to be the last $a_i$ $U$'s of $w$ including any markings. Append a $D$ to $R_i$ and delete $U^{a_i}$ from the end of $w$.
    \item Repeat steps (2) and (3) until $R_1, R_2, \ldots, R_k$ have all been defined.
\end{enumerate}
By construction, we have $\ell(R_i) = a_i$ for all $1 \leq i \leq k$.
Moreover, as $\pi$ is a Dyck word, the $U$ that is marked at the end of step $(2)$ must have been unmarked at the beginning of step $(2)$. Hence, $R_1, R_2, \ldots, R_k$ have exactly $k-1$ marked $U$'s, and $[R_1, R_2, \ldots, R_k]$ is an element of $\mneck[\mu]$.

It is straightforward to verify that the two procedures described above are inverse bijections between marked necklaces of extended peaks whose up-lengths are cyclically equivalent to $\mu$ and Dyck words whose rise compositions are cyclically equivalent to $\mu$. Hence, the lemma follows.
\end{proof}

Let $\comp_{n,k,m}$ denote the set of all compositions in $\comp_{n,k}$ with exactly $m$ parts at least $2$, and let $\ccomp_{n,k,m}$ be its cyclic counterpart. 
Using Lemma ~\ref{lemma:risecompsize}, we obtain a combinatorial proof for Theorem ~\ref{t-explicit}.  The proof of the nontrivial case is given below.

\begin{proof}[Combinatorial proof of Theorem ~\ref{t-explicit}]
First, we take the Dyck paths counted by $w_{n,k,m}$ and partition them by the cyclic equivalence classes of their rise compositions.
Then we have 
\begin{equation} \label{e-needthislater}
    w_{n,k,m} = \sum_{[\mu] \in \ccomp_{n,k,m}} \lvert D[\mu] \rvert
    = \sum_{[\mu] \in \ccomp_{n,k,m}} \frac{\ord[\mu]}{k} \binom{n}{k-1}
\end{equation}
upon applying Lemma  ~\ref{lemma:risecompsize}.
Next, recall that every cyclic composition $[\mu]\in\ccomp_{n,k,m}$ contains $\ord[\mu]$ distinct compositions in $\comp_{n,k,m}$, so we have 
\begin{equation} \label{e-wcomp}
    w_{n,k,m} = \sum_{\mu \in \comp_{n,k,m}} \frac{1}{\ord[\mu]}\frac{\ord[\mu]}{k} \binom{n}{k-1}
    = \frac{1}{k} \binom{n}{k-1} \lvert \comp_{n,k,m} \rvert.
\end{equation}
Finally, we claim that
\begin{equation} \label{e-compbin}
\lvert \comp_{n,k,m} \rvert = \binom{n-k-1}{m-1}\binom{k}{m};
\end{equation}
indeed, we can uniquely generate all compositions of $n$ into $k$ parts with exactly $m$ parts at least 2 using the following process:
\begin{enumerate}
    \item Take the composition $(1^k)$ consisting of $k$ copies of 1, and choose $m$ positions $1\leq i_1<i_2<\cdots<i_m \leq k$ within this composition; there are $\binom{k}{m}$ ways to do this.
    \item Choose a composition $\mu=(\mu_1,\mu_2,\dots,\mu_m)$ of $n-k$ into $m$ parts; there are $\binom{n-k-1}{m-1}$ ways to do this.
    \item For each $1 \leq j \leq m$, add $\mu_j$ to the $i_j$th entry of $(1^k)$. The result is a composition of $n$ into $k$ parts with exactly $m$ parts at least 2.
\end{enumerate}
Substituting \eqref{e-compbin} into \eqref{e-wcomp} completes the proof.
\end{proof}

As a consequence of our combinatorial proof for Theorem ~\ref{t-explicit}, we also obtain a proof of Theorem ~\ref{c-wr} for the numbers $w_{n, k_{1}, k_{2}, \ldots, k_{r}}$.


\subsection{Combinatorial proof of Theorem \ref{th:relationNandW}}

Now, we focus our attention on finding a combinatorial proof of Theorem \ref{th:relationNandW}, which in turn will give a combinatorial proof of Theorem~\ref{th:main}. See Figure ~\ref{im:symmetry} for an example of the symmetry in Theorem~\ref{th:main} that we wish to prove.

\begin{figure}[ht]
\begin{subfigure}[c]{\linewidth}
    \centering
    \begin{tikzpicture}[scale = .3, auto=center, rednode/.style={circle,fill=red!90, inner sep=1.5pt, minimum width=4pt}]
	\draw[step=1.0, gray!100, thin] (0,0) grid (10, 4);
	\draw[black!80, line width=2pt] (0,0) -- (1,1) -- (2,0) -- (3,1) -- (4,2) -- (5,3) -- (6,4) -- (7,3) -- (8,2) -- (9,1) -- (10,0) ;
    \draw (6,4) node[rednode] {};
    \begin{scope}[shift={(11,0)}]
        \draw[step=1.0, gray!100, thin] (0,0) grid (10, 4);
	\draw[black!80, line width=2pt] (0,0) -- (1,1) -- (2,2) -- (3,3) -- (4,4) -- (5,3) -- (6,2) -- (7,1) -- (8,0) -- (9,1) -- (10,0) ;
        \draw (4,4) node[rednode] {};
    \end{scope}
    \begin{scope}[shift={(22,0)}]
        \draw[step=1.0, gray!100, thin] (0,0) grid (10, 4);
	\draw[black!80, line width=2pt] (0,0) -- (1,1) -- (2,2) -- (3,3) -- (4,4) -- (5,3) -- (6,2) -- (7,1) -- (8,2) -- (9,1) -- (10,0) ;
        \draw (4,4) node[rednode] {};
    \end{scope}
    \begin{scope}[shift={(33,0)}]
        \draw[step=1.0, gray!100, thin] (0,0) grid (10, 4);
	\draw[black!80, line width=2pt] (0,0) -- (1,1) -- (2,2) -- (3,3) -- (4,4) -- (5,3) -- (6,2) -- (7,3) -- (8,2) -- (9,1) -- (10,0) ;
        \draw (4,4) node[rednode] {};
    \end{scope}
    \begin{scope}[shift={(44,0)}]
        \draw[step=1.0, gray!100, thin] (0,0) grid (10, 4);
	\draw[black!80, line width=2pt] (0,0) -- (1,1) -- (2,2) -- (3,3) -- (4,4) -- (5,3) -- (6,4) -- (7,3) -- (8,2) -- (9,1) -- (10,0) ;
        \draw (4,4) node[rednode] {};
    \end{scope}
    \end{tikzpicture}
    \caption{$w_{5,2,1} = 5$}
\end{subfigure}
\begin{subfigure}[b]{\linewidth}
    \centering
    \begin{tikzpicture}[scale = .3, auto=center, rednode/.style={circle,fill=red!90, inner sep=1.5pt, minimum width=5pt}]
	\draw[step=1.0, gray!100, thin] (0,0) grid (10, 4);
	\draw[black!80, line width=2pt] (0,0) -- (1,1) -- (2,2) -- (3,1) -- (4,0) -- (5,1) -- (6,2) -- (7,3) -- (8,2) -- (9,1) -- (10,0) ;
    \draw (2,2) node[rednode] {};
    \draw (7,3) node[rednode] {};
    \begin{scope}[shift={(11,0)}]
        \draw[step=1.0, gray!100, thin] (0,0) grid (10, 4);
	\draw[black!80, line width=2pt] (0,0) -- (1,1) -- (2,2) -- (3,3) -- (4,2) -- (5,1) -- (6,0) -- (7,1) -- (8,2) -- (9,1) -- (10,0) ;
        \draw (3,3) node[rednode] {};
        \draw (8,2) node[rednode] {};
    \end{scope}
    \begin{scope}[shift={(22,0)}]
        \draw[step=1.0, gray!100, thin] (0,0) grid (10, 4);
	\draw[black!80, line width=2pt] (0,0) -- (1,1) -- (2,2) -- (3,3) -- (4,2) -- (5,1) -- (6,2) -- (7,3) -- (8,2) -- (9,1) -- (10,0) ;
        \draw (3,3) node[rednode] {};
        \draw (7,3) node[rednode] {};
    \end{scope}
    \begin{scope}[shift={(33,0)}]
        \draw[step=1.0, gray!100, thin] (0,0) grid (10, 4);
	\draw[black!80, line width=2pt] (0,0) -- (1,1) -- (2,2) -- (3,1) -- (4,2) -- (5,3) -- (6,4) -- (7,3) -- (8,2) -- (9,1) -- (10,0) ;
        \draw (2,2) node[rednode] {};
        \draw (6,4) node[rednode] {};
    \end{scope}
    \begin{scope}[shift={(44,0)}]
        \draw[step=1.0, gray!100, thin] (0,0) grid (10, 4);
	\draw[black!80, line width=2pt] (0,0) -- (1,1) -- (2,2) -- (3,3) -- (4,2) -- (5,3) -- (6,4) -- (7,3) -- (8,2) -- (9,1) -- (10,0) ;
        \draw (3,3) node[rednode] {};
        \draw (6,4) node[rednode] {};
    \end{scope}
    \end{tikzpicture}
    \caption{$w_{5,2,2} = 5$}
\end{subfigure}
\caption{Dyck paths of semilength $5$ with $2$ peaks where peaks that are part of a $UUD$-factor are colored red.}
\label{im:symmetry}
\end{figure}

Our proof will mostly rely on two key results. The first gives a combinatorial interpretation for Narayana numbers in terms of cyclic compositions.

\begin{lemma}
\label{lemma:part1_path}
Let $k \geq 1$ and $m \geq 0$.
Then the Narayana number $N_{k,m}$ is the number of cyclic compositions of $2k+1$ into $k$ parts such that exactly $m$ parts are at least 2.
\end{lemma}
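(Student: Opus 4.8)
The plan is to reduce the count of cyclic compositions to a count of ordinary (linear) compositions, evaluate the latter by an elementary stars-and-bars argument, and then recognize the answer as $N_{k,m}$ via its explicit formula. The key structural input is that $2k+1$ and $k$ are coprime, which forces every relevant cyclic composition to be primitive.

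First I would observe that $\gcd(2k+1,k)=\gcd(1,k)=1$, so Lemma~\ref{lemma:2k+1_primitive} guarantees that every $[\mu]\in\ccomp_{2k+1,k}$ is primitive, i.e., has order exactly $k$. Consequently each such cyclic composition has exactly $k$ distinct linear representatives, and the property of having exactly $m$ parts at least $2$ is plainly invariant under cyclic shift. Hence the cyclic compositions being counted are in $k$-to-one correspondence with the ordinary compositions of $2k+1$ into $k$ parts having exactly $m$ parts $\geq 2$, and it suffices to count the latter and then divide by $k$.

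Next I would count these linear compositions directly. Choosing which $m$ of the $k$ positions hold the parts that are $\geq 2$ contributes a factor $\binom{k}{m}$. Writing each of the remaining $k-m$ parts as $1$ and each chosen part as $1+b_i$ with $b_i\geq 1$, the sum condition $\sum = 2k+1$ becomes $\sum_{i=1}^m b_i = k+1$, and the number of such positive integer solutions is $\binom{k}{m-1}$ by stars and bars. This yields $\binom{k}{m}\binom{k}{m-1}$ linear compositions, so dividing by $k$ gives $\tfrac{1}{k}\binom{k}{m}\binom{k}{m-1}=N_{k,m}$, as desired.

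Since the heart of the argument is a direct enumeration, I do not expect a serious obstacle; the only points requiring care are justifying the clean $k$-to-one reduction (which is precisely where the coprimality $\gcd(2k+1,k)=1$ and Lemma~\ref{lemma:2k+1_primitive} are invoked) and verifying the degenerate endpoints of the range $m\geq 0$. For $m=0$, no composition of $2k+1 > k$ into $k$ positive parts can have every part equal to $1$, so the count is $0$, matching $N_{k,0}=0$; and for $m>k$ both sides vanish. These boundary checks confirm the identity on its full stated range.
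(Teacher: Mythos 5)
Your proof is correct, but it takes a genuinely different route from the paper's. The paper proves Lemma~\ref{lemma:part1_path} \emph{bijectively}: it encodes a cyclic composition $[\mu_1,\ldots,\mu_k]$ as the word $U^{\mu_1-1}D\cdots U^{\mu_k-1}D$ (with $k+1$ copies of $U$ and $k$ copies of $D$), applies the cycle lemma to select the unique $1$-dominating shift, and strips the leading $U$ to land on a Dyck path of semilength $k$ with $m$ $UD$-factors---i.e., it matches cyclic compositions with the classical Dyck-path interpretation of $N_{k,m}$. You instead count: primitivity (via $\gcd(2k+1,k)=1$ and Lemma~\ref{lemma:2k+1_primitive}) gives a clean $k$-to-one correspondence with linear compositions, stars and bars gives $\binom{k}{m}\binom{k}{m-1}$ of those, and division by $k$ recovers the closed formula $N_{k,m}=\frac{1}{k}\binom{k}{m}\binom{k}{m-1}$. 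Your argument is more elementary---it needs neither the cycle lemma nor the peak-counting interpretation of Narayana numbers---and it essentially anticipates the technique the paper uses later in its combinatorial proof of Theorem~\ref{t-explicit} (the orbit-counting step \eqref{e-wcomp} together with the stars-and-bars count \eqref{e-compbin}, specialized to $n=2k+1$). What the paper's version buys, and yours does not, is an \emph{explicit bijection} between cyclic compositions and Dyck paths: that bijection is invoked by name in Steps (3) and (5) of Definition~\ref{dfn:bijection} to assemble the concrete involution demonstrating the symmetry $w_{2k+1,k,m}=w_{2k+1,k,k+1-m}$ of Theorem~\ref{th:main}, so a purely enumerative proof of the lemma would leave that later construction without one of its ingredients. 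Your boundary checks ($m=0$ and $m>k$) are handled correctly by the vanishing binomial coefficients.
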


\begin{proof}
We will give a bijective map that takes a cyclic composition of $2k+1$ into $k$ parts, exactly $m$ of which are at least $2$, to a Dyck path of semilength $k$ with $m$ $UD$-factors, which are counted by the Narayana numbers $N_{k,m}$. 

Given a cyclic composition $[\mu_{1},\mu_{2},\ldots, \mu_{k}]$ of $2k+1$ with exactly $m$ parts that are at least $2$, consider the word $U^{\mu_1-1}D U^{\mu_2-1}D \cdots U^{\mu_k-1}D$, which has $k+1$ copies of $U$ and $k$ copies of $D$. 
By the cycle lemma, there is exactly one cyclic shift of this word that is $1$-dominating---that is, with more $U$'s than $D$'s in every prefix.
Then the first two entries of this $1$-dominating sequence are necessarily $U$'s. 
Removing the first $U$, we obtain a Dyck path of semilength $k$ with exactly $m$ $UD$-factors.

It is easily verified that the inverse procedure is given by the following: from a semilength $k$ Dyck path with $m$ $UD$-factors, we get a sequence $(a_1, a_2, \dots, a_k)$ where $a_i$ is the number of $U$'s that immediately precede the $i$th $D$.
For example, from $UDUUDDUD$ we get the sequence $(1,2,0,1)$. 
Then we add $2$ to $a_1$ and $1$ to each other $a_i$, forming a composition $\mu$ of $2k+1$ into $k$ parts, exactly $m$ of which are at least $2$.
Taking the cyclic composition $[\mu]$ completes the inverse.
\end{proof}

We note that the map used in the proof of Lemma \ref{lemma:part1_path} is related to the standard bijection between \L ukasiewicz paths and Dyck paths. 
A \emph{\L ukasiewicz path} of length $n$ is a path in $\mathbb{Z}^2$ with step set $\{(1,-1),(1,0),(1,1),(1,2), \ldots \}$, starting from $(0,0)$ and ending at $(n,0)$, that never traverses below the $x$-axis; these paths were introduced in relation to the preorder degree sequence of a plane tree, which determines the tree unambiguously \cite[Section 1.5.3]{flajolet-sedgewick}.

As $2k+1$ and $k$ are relatively prime, every cyclic composition of $\ccomp_{2k+1,k}$ is primitive. Lemma ~\ref{lemma:risecompsize} then gives us the following corollary.

\begin{cor}\label{cor:part2_trees}
Let $k\geq 1$. Given a cyclic composition $[\mu] \in \ccomp_{2k+1,k}$, there are exactly $\binom{2k+1}{k-1}$ Dyck words whose rise composition belongs to $[\mu]$.
\end{cor}

We are now ready to complete our combinatorial proof of Theorem \ref{th:relationNandW}.

\begin{proof}[Proof of Theorem \ref{th:relationNandW}]
Recall that $w_{2k+1, k, m}$ counts Dyck words of semilength $2k+1$, $k$ $UD$-factors, and $m$ $UUD$-factors; these are precisely the Dyck words of semilength $2k+1$ whose rise composition has $k$ parts with exactly $m$ parts having size at least 2. 
These rise compositions can be grouped into cyclic compositions of $2k+1$ having $k$ total parts and $m$ parts at least 2, which are counted by $N_{k,m}$ as established in Lemma \ref{lemma:part1_path}.
Furthermore, by Lemma \ref{cor:part2_trees}, there are exactly $\binom{2k+1}{k-1}$ Dyck words corresponding to each such cyclic composition.
It follows that $w_{2k+1, k, m} = \binom{2k+1}{k-1} N_{k,m}$ as desired.
\end{proof}

From the proofs of Lemmas ~\ref{lemma:risecompsize} and ~\ref{lemma:part1_path}, we implicitly obtain a bijection that demonstrates the symmetry $w_{2k+1, k, m} = w_{2k+1, k, k+1-m}$.
For the sake of completeness, we explicitly write out the bijection that we obtain and give an example in Figure ~\ref{im:bijection}.

\begin{dfn} \label{dfn:bijection}
Let $\pi$ be a Dyck word with semilength $2k+1$, $k$ $UD$-factors, and $m$ $UUD$-factors. 
Construct a Dyck word $\pi'$ with semilength $2k+1$, $k$ $UD$-factors, and $k+1-m$ $UUD$-factors via the following algorithm:
\begin{enumerate}
    \item Set $M$ to be the marked necklace of extended peaks associated to $\pi$ via Lemma ~\ref{lemma:risecompsize}.
    \item Decompose $M$ into a pair consisting of its underlying unmarked necklace of extended peaks $N$ and a $(k-1)$-subset $S$ of $[2k+1] = \{1,2,\dots,2k+1\}$ containing the positions of the $U$'s marked in $M$.
    \item Set $P$ to be the Dyck path of semilength $k$ with $m$ $UD$-factors that is associated to $N$ via the bijective map in Lemma ~\ref{lemma:part1_path}.
    \item Set $P'$ to be a Dyck path of semilength $k$ with $k+1-m$ $UD$-factors obtained via any bijection demonstrating the Narayana symmetry \textup{(}see \cite{Kreweras1970, Kreweras1972, Lalanne1992} for example\textup{)}. Perhaps the simplest and most intuitive is the one in terms of non-crossing set partitions in \cite{Kreweras1972}, where the author proved the stronger fact that the lattice of non-crossing partitions is self-dual.
    \item Set $N'$ to be the necklace of extended peaks associated to $P'$ via Lemma~\ref{lemma:part1_path}.
    \item  Set $M'$ to be the marked necklace of extended peaks obtained from the necklace $N'$ and subset $S$.
    \item Set $\pi'$ to be the Dyck word with semilength $2k+1$, $k$ $UD$-factors, and $k+1-m$ $UUD$-factors associated with $M'$ via Lemma~\ref{lemma:risecompsize}.
\end{enumerate}
\end{dfn}

\begin{rmk}
In Steps \textup{(}2\textup{)} and \textup{(}6\textup{)}, there is some choice of how to label the positions of the $U$'s in a necklace $N\in \neck_{2k+1,k}$ such that one can pass from a marked necklace to a pair consisting of its underlying unmarked necklace and a $(k-1)$-subset of $[2k+1]$ and vice versa.
We detail a choice of labeling that we deem to be canonical.
By Lemma ~\ref{lemma:cycle}, there is a unique ordering $(S_1, S_2, \ldots, S_k)$ of the extended peaks in $N$ such that $U^{\mu_{1}-1}DU^{\mu_{2}-1}D\cdots U^{\mu_{k}-1}D$ is $1$-dominating where $\mu_i = \ell(N_{i})$. 
Starting from the leftmost $U$, label the $U$'s in $N_1$ with the numbers $1, 2, \ldots, \mu_1$, label the $U$'s in $N_2$ with the numbers $\mu_1+1, \ldots, \mu_1+\mu_2$, and so on.
\end{rmk}

\begin{figure}[ht]
\begin{center}
\begin{subfigure}[c]{.34\textwidth}
    \begin{tikzpicture}[scale=.3, roundnode/.style={circle,fill=black!60, inner sep=1.5pt, minimum width=4pt}, triangnode/.style={regular polygon,regular polygon sides=3, fill=black!60, inner sep=1.5pt, outer sep = 0pt}]
        \draw[step=1.0, gray!100, thin] (0,0) grid (18, 3);
	\draw[black!80, line width=2pt] (0,0) -- (1,1) -- (2,2) -- (3,3) -- (4,2) -- (5,1) -- (6,2) -- (7,1) -- (8,0) -- (9,1) -- (10,2) -- (11,3) -- (12,2) -- (13,1) -- (14,0) -- (15,1) -- (16,2) -- (17,1) -- (18,0);
    \end{tikzpicture}
\end{subfigure}
 $\longrightarrow$ \hspace{.1cm}
\begin{subfigure}[c]{.3\textwidth}
$\big[\color{red}\underline{U} \,\underline{U}\color{black}UD,UD,\color{red}\underline{U}\color{black}UUD,UUD\big]$
\end{subfigure}
 $\longrightarrow$ 
 \\[.1cm]
 \hspace{.1cm}
\begin{subfigure}[c]{.35\textwidth}
    $\Big (\big[U^3D,U^2D,U^3D,UD\big], \{1,6,7\}\Big)$
\end{subfigure}
\hspace{.1cm}
$\longrightarrow$
\begin{subfigure}[c]{.3\textwidth}
    \begin{tikzpicture}[scale = .3, auto=center]
        \node at (-1, 1.5) {$\Big($};
	\draw[step=1.0, gray!100, thin] (0,0) grid (8, 3);
	\draw[black!80, line width=2pt] (0,0) -- (1,1) -- (2,0) -- (3,1) -- (4,0) -- (5,1) -- (6,2) -- (7,1) -- (8,0);
        \node at (8.3, 1) {,};
        \node at (11.4, 1.5) {$\{1, 6, 7\} \Big)$};
    \end{tikzpicture}
\end{subfigure}
$\longrightarrow$
\\[.1cm]
\begin{subfigure}[c]{.3\textwidth}
    \begin{tikzpicture}[scale = .3, auto=center]
        \node at (-1, 1.5) {$\Big($};
	\draw[step=1.0, gray!100, thin] (0,0) grid (8, 3);
	\draw[black!80, line width=2pt] (0,0) -- (1,1) -- (2,2) -- (3,3) -- (4,2) -- (5,1) -- (6,0) -- (7,1) -- (8,0);
        \node at (8.3, 1) {,};
        \node at (11.4, 1.5) {$\{1, 6, 7\} \Big)$};
    \end{tikzpicture}
\end{subfigure}
$\longrightarrow$
\hspace{.1cm}
\begin{subfigure}[c]{.35\textwidth}
    $\Big(\big[U^5D,UD,UD,U^2D\big], \{1, 6, 7\}\Big)$
\end{subfigure}
$\longrightarrow$
\\[.3cm]
\hspace{.1cm}
\begin{subfigure}[c]{.26\textwidth}
\big[\color{red}\underline{U}\color{black}UUUUD,\color{red}\underline{U}\color{black}D,\color{red}\underline{U}\color{black},UUD\big]
\end{subfigure}
$\longrightarrow$ \hspace{.3cm}
\begin{subfigure}[c]{.37\textwidth}
    \begin{tikzpicture}[scale=.33, roundnode/.style={circle,fill=black!60, inner sep=1.5pt, minimum width=4pt}, triangnode/.style={regular polygon,regular polygon sides=3, fill=black!60, inner sep=1.5pt, outer sep = 0pt}]
        \draw[step=1.0, gray!100, thin] (0,0) grid (18, 5);
	\draw[black!80, line width=2pt] (0,0) -- (1,1) -- (2,2) -- (3,3) -- (4,4) -- (5,5) -- (6,4) -- (7,3) -- (8,2) -- (9,1) -- (10,0) -- (11,1) -- (12,0) -- (13,1) -- (14,0) -- (15,1) -- (16,2) -- (17,1) -- (18,0);
    \end{tikzpicture}
\end{subfigure}
\end{center}
    \caption{Example of the bijection given in Definition ~\ref{dfn:bijection} for $k = 4$ where the Lalanne-Kreweras involution \cite{Kreweras1970, Lalanne1992} is used in Step (4). }
    \label{im:bijection}
\end{figure}

\subsection{Combinatorial proof of a related symmetry}

In addition to the symmetry in Theorem~\ref{th:main}, it can also be observed that $w_{2k-1, k, m} = w_{2k-1, k, k-m}$ for all $1 \leq m \leq k$, which is a consequence of the following variation of Theorem~\ref{th:relationNandW}.

\begin{thm}
For all $k \geq 1$ and $m \geq 0$, we have
\label{th:relationNandW2}
\begin{equation}
\label{eq:relNW2}
w_{2k-1, k, m} = \binom{2k-1}{k-1} N_{k-1,m}.    
\end{equation}
\end{thm}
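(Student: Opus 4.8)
The plan is to mirror the proof of Theorem~\ref{th:relationNandW} almost verbatim, replacing the role of $2k+1$ by $2k-1$ throughout and isolating the one place where the argument genuinely changes. By Proposition~\ref{prop:reformulation}, $w_{2k-1,k,m}$ counts plane trees with $2k-1$ non-root vertices, $k$ leaves, and $m$ good leaves; exactly as before, these are the plane trees whose extended leaf decomposition consists of $k$ extended leaves (with $2k-1$ non-leaf vertices in total), of which exactly $m$ have length at least $2$, and these decompositions fill out the necklaces in $\neck_{2k-1,k}$ associated with cyclic compositions of $2k-1$ into $k$ parts having $m$ parts that are at least $2$. So I would again factor the count as (number of admissible necklaces) $\times$ (number of trees per necklace), establishing an analog of each of Lemmas~\ref{lemma:part1_path} and~\ref{lemma:part2_trees}. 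The arithmetic fact enabling both analogs is that $\gcd(2k-1,k)=1$, so by Lemma~\ref{lemma:2k+1_primitive} every $[\mu]\in\ccomp_{2k-1,k}$ is primitive.

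The analog of Lemma~\ref{lemma:part2_trees} I expect to come essentially for free. Its proof uses only that $[\mu]$ is primitive (so $\psi[\mu]$ has $\binom{2k-1}{k-1}$ markings) together with the tree-building and detaching bijection driven by Corollary~\ref{cor:reverse_cycle} applied to a sequence of $k-1$ copies of $\bigcirc$ and $k$ copies of $\square$. Since the number of marks $k-1$ and the number of extended leaves $k$ are unchanged when $2k+1$ becomes $2k-1$, the identical argument shows that each necklace in $\neck_{2k-1,k}$ arises from exactly $\binom{2k-1}{k-1}$ plane trees.

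The real obstacle is the analog of Lemma~\ref{lemma:part1_path}: I must show that the number of cyclic compositions of $2k-1$ into $k$ parts with exactly $m$ parts at least $2$ is $N_{k-1,m}$. The naive imitation forms the word $U^{\mu_1-1}DU^{\mu_2-1}D\cdots U^{\mu_k-1}D$, but now $\sum(\mu_i-1)=(2k-1)-k=k-1$, so this word has $k-1$ copies of $U$ and $k$ copies of $D$ --- more $D$s than $U$s --- and the cycle lemma produces no $1$-dominating shift. The fix is to invoke the reverse version instead: reading $U$ as $\bigcirc$ and $D$ as $\square$, Corollary~\ref{cor:reverse_cycle} (with parameter $k-1$) yields a unique cyclic shift $w'$ no proper prefix of which has more $D$s than $U$s. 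Since $w'$ carries one more $D$ than $U$, it must end in $D$, and deleting that final $D$ leaves a genuine Dyck path $P$ of semilength $k-1$. I would then verify that $[\mu]\mapsto P$ is a bijection (the inverse being ``append a $D$, regard the result as a cyclic word, and record each part as one more than the number of $U$s preceding the corresponding $D$''). The parameter bookkeeping is the delicate part: each part $\mu_i\geq 2$ contributes a maximal run of $U$s while each part equal to $1$ contributes none, so the cyclic word has exactly $m$ maximal $U$-runs and hence $m$ cyclic $UD$-factors; because $w'$ (and $P$, being a nonempty Dyck path) ends in $D$, no $UD$-factor is created or destroyed at the cyclic wrap or at the deleted step, so $P$ has exactly $m$ peaks and is counted by $N_{k-1,m}$.

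Assembling the three ingredients gives $w_{2k-1,k,m}=\binom{2k-1}{k-1}N_{k-1,m}$, and the announced symmetry $w_{2k-1,k,m}=w_{2k-1,k,k-m}$ then follows from the Narayana symmetry $N_{k-1,m}=N_{k-1,k-m}$. I anticipate that the only finicky points are the boundary and cyclic accounting of $UD$-factors in the reverse-cycle-lemma bijection described above, and the degenerate case $k=1$ (where $P$ is the empty path and $N_{0,0}=1$), which I would treat separately as a sanity check.
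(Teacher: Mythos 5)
Your proposal is correct and takes essentially the same route as the paper: the paper likewise reuses the marking/tree-building argument of Lemma~\ref{lemma:part2_trees} unchanged for $\neck_{2k-1,k}$ (its Lemma~\ref{cor:part2_trees2k-1}, proof omitted as ``readily adapted''), and its Lemma~\ref{lemma:part1_path2} is exactly your modified bijection---form the word with $k-1$ copies of $U$ and $k$ copies of $D$, apply Corollary~\ref{cor:reverse_cycle} to get the unique cyclic shift whose proper prefixes never have more $D$s than $U$s, and delete the final $D$ to obtain a Dyck path of semilength $k-1$ with $m$ $UD$-factors, with the inverse given by appending a $D$ and reading off the parts. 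Your endpoint bookkeeping (the chosen shift ends in $DD$, so no peak is created or destroyed at the wrap or at the deletion) is a point the paper leaves implicit, but it is the same argument.
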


Theorem~\ref{th:relationNandW2} can be proven in a way that is completely analogous to our combinatorial proof of Theorem~\ref{th:relationNandW}, but relying on Lemma \ref{lemma:part1_path2} and Corollary \ref{cor:part2_trees2k-1} below. 

\begin{lemma}
\label{lemma:part1_path2}
Let $k\geq 1$ and $m \geq 0$. Then the Narayana number $N_{k-1,m}$ is the number of cyclic compositions of $2k-1$ into $k$ parts such that exactly $m$ parts are at least 2.
\end{lemma}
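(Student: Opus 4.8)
The plan is to mirror the proof of Lemma~\ref{lemma:part1_path} almost verbatim, producing a bijection between cyclic compositions of $2k-1$ into $k$ parts with exactly $m$ parts at least $2$ and Dyck paths of semilength $k-1$ with $m$ $UD$-factors, the latter being counted by $N_{k-1,m}$. Given such a cyclic composition $[\mu_1,\dots,\mu_k]$, I would first form the word $U^{\mu_1-1}DU^{\mu_2-1}D\cdots U^{\mu_k-1}D$. Since the parts now sum to $2k-1$, this word has only $k-1$ copies of $U$ against $k$ copies of $D$---one fewer $U$ than in the $2k+1$ case---and, read cyclically, it still has exactly $m$ occurrences of $UD$ (one for each block $U^{\mu_i-1}D$ with $\mu_i\geq 2$, and none created at a block seam, since every block ends in $D$).

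Because there is now one more $D$ than $U$, the plain cycle lemma produces no $1$-dominating shift, so I would instead invoke Corollary~\ref{cor:reverse_cycle} with $\bigcirc=U$ and $\square=D$ (here there are $k-1$ copies of $\bigcirc$ and $k$ copies of $\square$). This yields a unique cyclic shift $\tau=\tau_1\cdots\tau_{2k-1}$ no proper prefix of which has more $D$s than $U$s. I would then verify that $\tau$ must begin with $U$ and that its last two letters are $DD$: the length-$(2k-2)$ prefix is balanced so the final letter is $D$, and if the penultimate letter were $U$ the length-$(2k-3)$ prefix would have more $D$s than $U$s, contradicting the defining property. Deleting this final $D$ leaves a balanced word whose every prefix has at least as many $U$s as $D$s, i.e.\ a Dyck path of semilength $k-1$; since the deleted $D$ is preceded by another $D$ no $UD$-factor is destroyed, and since $\tau$ begins with $U$ there is no wrap-around $UD$, so the linear $UD$-count equals the cyclic count $m$. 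This produces the desired path with $m$ peaks.

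For the inverse I would append a $D$ to a semilength-$(k-1)$ Dyck path with $m$ peaks, observe that the resulting word has no proper prefix with more $D$s than $U$s and is therefore the distinguished shift $\tau$, and then cut it at its $k$ copies of $D$ to read off a composition $\mu$ with $\mu_i-1$ equal to the number of $U$s preceding the $i$th $D$; taking $[\mu]$ recovers the cyclic composition. The main obstacle I anticipate is the bookkeeping of the $UD$-factor count through the cyclic shift and the deletion---specifically pinning down that $\tau$ ends in $DD$, and checking that $\tau_1$ is preceded cyclically by a $D$ (which holds because $\tau$ ends in $D$), so that the parse returns a genuine representative of the original cyclic class rather than one split mid-block. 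As an independent consistency check, one can argue purely enumeratively: since $\gcd(2k-1,k)=1$, Lemma~\ref{lemma:2k+1_primitive} forces every such cyclic composition to be primitive of order $k$, so dividing the $\binom{k}{m}\binom{k-2}{m-1}$ linear compositions by $k$ gives $\tfrac1k\binom{k}{m}\binom{k-2}{m-1}=N_{k-1,m}$.
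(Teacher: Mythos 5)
Your proposal is correct and takes essentially the same approach as the paper's proof: form the word $U^{\mu_1-1}D\cdots U^{\mu_k-1}D$ with $k-1$ copies of $U$ and $k$ copies of $D$, invoke Corollary~\ref{cor:reverse_cycle} to isolate the unique cyclic shift with no proper prefix having more $D$s than $U$s, delete the final $D$ to obtain a Dyck path of semilength $k-1$ with $m$ $UD$-factors, and invert by appending a $D$ and reading off the composition. Your additional verifications (the terminal $DD$, the absence of a wrap-around $UD$-factor, and the parse landing on a block boundary) and the primitivity-based enumerative cross-check are correct details that the paper leaves implicit.
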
    

\begin{proof}
We follow the proof of Lemma \ref{lemma:part1_path} closely.
Given a cyclic composition $[\mu_{1},\mu_{2},\ldots, \mu_{k}]$ of $2k-1$ with exactly $m$ parts that are at least $2$, we build a sequence consisting of $k-1$ copies of $U$ and $k$ copies of $D$ in the same way as in the proof of Lemma \ref{lemma:part1_path}. 
By Corollary \ref{cor:reverse_cycle}, there is exactly one cyclic shift of this sequence such that any proper prefix of the sequence contains at least as many $U$'s as the number of $D$'s. 
Then the last entry of this cyclic shift is a $D$; removing this last $D$, we obtain a Dyck path of semilength $k-1$ and exactly $m$ $UD$-factors.

Conversely, consider a Dyck path of semilength $k-1$ with exactly $m$ $UD$-factors. 
We append a $D$ to the corresponding Dyck word, and form the sequence $a_1, a_2, \dots, a_k$, where $a_i$ is the number of $U$s that immediately precede the $i$th $D$.
We then add $1$ to every number in this sequence and take the equivalence class of its cyclic shifts, yielding a cyclic composition of $2k-1$ into $k$ parts, exactly $m$ of which are at least $2$.
\end{proof}

As $2k-1$ and $k$ are relatively prime, Lemma ~\ref{lemma:risecompsize} gives us the analogous corollary to Corollary ~\ref{cor:part2_trees}.

\begin{cor}
\label{cor:part2_trees2k-1}
Let $k\geq 1$. Given a cyclic composition $[\mu] \in \ccomp_{2k-1,k}$, there are exactly $\binom{2k-1}{k-1}$ Dyck words whose rise composition belongs to $[\mu]$.
\end{cor}

Using Lemma ~\ref{lemma:risecompsize} and the proofs of Lemmas ~\ref{lemma:part1_path} and \ref{lemma:part1_path2}, we obtain a proof of Theorem ~\ref{c-evenmoretantalizing} regarding a symmetry on the numbers $w_{n,k_1, k_2, \ldots, k_r}$.

\begin{proof} [Proof of Theorem \ref{c-evenmoretantalizing}]
Let $\mu \in \ccomp_{rk+1, k, k, \ldots, k, m}$. From Lemma ~\ref{lemma:risecompsize}, we have $D[\mu] = \binom{rk+1}{k-1}$ as $[\mu]$ must be primitive. Note that $\lvert \ccomp_{rk+1, k, k, \ldots, k, m} \rvert = N_{k,m}$ which can be shown via an argument analogous to that in the proof of Lemma ~\ref{lemma:part1_path}. Thus, we have $w_{rk+1, k, k, \ldots, k, m} = \binom{n}{k-1}N_{k,m}$, which gives the desired symmetry in light of the Narayana symmetry.
The symmetry for the numbers $w_{rk-1, k, k, \ldots, k, m}$ can be proven similarly.
\end{proof}



\section{Further generalizations and applications} \label{s-generalized} We now detail several interesting generalizations and applications that can be obtained from our results in Section ~\ref{combinatorial_proofs}.

\subsection{Catalan identity}
First, we obtain a formula for the Catalan numbers $C_n$ in terms of primitive cyclic compositions via Lemma ~\ref{lemma:risecompsize}.

\begin{cor}
For all $n\geq 1$, we have
\begin{equation*}
    C_n = \frac{1}{n+1} \binom{2n}{n} = \sum_{d|n} \sum_{\substack{\mathcal{[\mu]} \in \ccomp_{n/d}\\
    \mathrm{primitive}}} \frac{1}{d}\binom{n}{\ord[\mu]\cdot d-1},
\end{equation*}
where $\ccomp_{n}$ is the set of all cyclic compositions of $n$.
\end{cor}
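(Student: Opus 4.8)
The plan is to partition all Dyck words of semilength $n$ according to the cyclic equivalence class of their rise composition, apply Lemma \ref{lemma:risecompsize} to each block, and then regroup the resulting sum using the unique decomposition of a cyclic composition into copies of a primitive one.

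First I would note that for $n \geq 1$, every Dyck word of semilength $n$ has a rise composition that is a genuine composition of $n$ into $k \geq 1$ parts, where $k$ is its number of $UD$-factors, and this rise composition lies in a unique cyclic equivalence class $[\mu] \in \ccomp_n$. Since the sets $\mathsf{D}[\mu]$ partition the Dyck words of semilength $n$ as $[\mu]$ ranges over $\ccomp_n$, and $C_n$ counts all such Dyck words, Lemma \ref{lemma:risecompsize} gives
\begin{equation*}
C_n = \sum_{[\mu] \in \ccomp_n} \lvert \mathsf{D}[\mu] \rvert = \sum_{k \geq 1}\ \sum_{[\mu] \in \ccomp_{n,k}} \frac{\ord[\mu]}{k}\binom{n}{k-1}.
\end{equation*}

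Next I would reindex this sum by the primitive decomposition recalled in Section \ref{preliminaries}: each $[\mu] \in \ccomp_{n,k}$ is uniquely a concatenation of $d$ copies of a primitive cyclic composition $[\nu] \in \ccomp_{n/d,\,k/d}$ for some $d$ dividing $\gcd(n,k)$, and in this case $\ord[\mu] = \ord[\nu] = k/d$. This sets up a bijection between $\ccomp_n$ and the pairs $(d,[\nu])$ with $d \mid n$ and $[\nu] \in \ccomp_{n/d}$ primitive. Under this correspondence $k = \ord[\nu]\cdot d$, so $\ord[\mu]/k = 1/d$ and $\binom{n}{k-1} = \binom{n}{\ord[\nu]\cdot d - 1}$; substituting and summing over the pairs $(d,[\nu])$ yields
\begin{equation*}
C_n = \sum_{d \mid n}\ \sum_{\substack{[\nu] \in \ccomp_{n/d}\\ \mathrm{primitive}}} \frac{1}{d}\binom{n}{\ord[\nu]\cdot d - 1},
\end{equation*}
which is the claimed identity after renaming $[\nu]$ to $[\mu]$.

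The only delicate point is the reindexing step, namely that $[\mu] \leftrightarrow (d,[\nu])$ is a genuine bijection and that the order is preserved via $\ord[\mu] = \ord[\nu]$; both follow immediately from the uniqueness of the primitive decomposition stated in the preliminaries, so no substantive obstacle arises. The remaining work is the arithmetic bookkeeping that turns $\ord[\mu]/k$ into $1/d$ and $k-1$ into $\ord[\nu]\cdot d - 1$, which I would only need to track carefully rather than prove.
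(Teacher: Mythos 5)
Your proposal is correct and follows essentially the same route as the paper: partition the Dyck words of semilength $n$ by the cyclic class of their rise composition, apply Lemma \ref{lemma:risecompsize}, and reindex via the unique decomposition of each cyclic composition into $d$ copies of a primitive one, using $\ord[\mu]=\ord[\nu]=k/d$. The only (immaterial) difference is order of operations—you apply the lemma before reindexing, while the paper reindexes to $\sum_{d\mid n}\sum_{[\nu]\,\mathrm{primitive}} \lvert \mathsf{D}([\nu]^d)\rvert$ first and then invokes the lemma.
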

\begin{proof}
Grouping Dyck paths by their cyclic rise composition, we have 
\begin{equation*}
C_{n} = \sum_{[\mu]\in \ccomp_{n}} D[\mu].
\end{equation*}
Recall that every cyclic composition of $n$ can be uniquely expressed as the concatenation of $d$ copies of a primitive cyclic composition of $n/d$ for some divisor $d$ of $n$. 
Similarly, for every divisor $d$ of $n$, each primitive cyclic composition of $n/d$ can be made into a cyclic composition of $n$ by concatenating $d$ copies. 
This gives us
\begin{equation*}
C_n = \sum_{[\mu]\in \ccomp_{n}} D[\mu] = \sum_{d|n}\sum_{\substack{[\mu] \in \ccomp_{n/d}\\ \text{primitive}}} D([\mu]^d)
\end{equation*}
where $[\mu]^d$ is the concatenation of $d$ copies of $[\mu]$.
From Lemma ~\ref{lemma:risecompsize}, this gives us precisely
\begin{equation*}
C_n = \sum_{d|n}\sum_{\substack{[\mu] \in \ccomp_{n/d}\\ \text{primitive}}} D([\mu]^d) = \sum_{d|n} \sum_{\substack{\mathcal{[\mu]} \in \ccomp_{n/d}\\
    \text{primitive}}} \frac{1}{d}\binom{n}{\ord[\mu]\cdot d-1}. \qedhere
\end{equation*}
\end{proof}

\subsection{Generalized Narayana identities}
Next, we generalize Theorems ~\ref{th:relationNandW} and \ref{th:relationNandW2} by expressing the numbers $w_{n,k,m}$, for any $n\neq 2k$, in terms of a family of generalized Narayana numbers due to Callan ~\cite{callan}.

Given $0 \leq r \leq n$ and $0 \leq k \leq n-r$, define the \emph{$r$-generalized Narayana number} $N_{n, k}^{(r)}$ by
\[N_{n, k}^{(r)} = \frac{r+1}{n+1} \binom{n+1}{k} \binom{n-r-1}{k-1}.\]
Observe that the usual Narayana numbers $N_{n,k}$ can be obtained by setting $r = 0$ in $N_{n, k}^{(r)}$.
For $k <0$, we use the convention that $\binom{n}{k} = 0$ except for the special case when $n = k = -1$, where we define $\binom{-1}{-1}$ to be $1$. 

The following is a generalization of Theorems \ref{th:relationNandW} and \ref{th:relationNandW2}.

\begin{thm}
 \label{thm:simplify}
For all $j,k \geq 1$ and $m\geq 0$, we have
\begin{equation*}
    w_{2k+j,k,m} = \frac{1}{j}\binom{2k+j}{k-1}N^{(j-1)}_{k+j-1,m}
\end{equation*}
and for all $1 \leq j \leq k$ and $m\geq 0$, we have
\begin{equation*}
    w_{2k-j,k,m} = \frac{1}{j}\binom{2k-j}{k-1}N^{(j-1)}_{k-1,m}.
\end{equation*}
\end{thm}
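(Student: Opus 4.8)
The plan is to obtain both identities directly from the combinatorially established formula \eqref{e-wcomp}, so that no new bijection is strictly needed. Recall that \eqref{e-wcomp} and \eqref{e-compbin} together give $w_{n,k,m} = \tfrac{1}{k}\binom{n}{k-1}\binom{n-k-1}{m-1}\binom{k}{m}$ in the nontrivial range $m\geq 1$. First I would specialize $n=2k+j$ and $n=2k-j$, which yields
\begin{equation*}
w_{2k+j,k,m} = \frac{1}{k}\binom{2k+j}{k-1}\binom{k+j-1}{m-1}\binom{k}{m}
\quad\text{and}\quad
w_{2k-j,k,m} = \frac{1}{k}\binom{2k-j}{k-1}\binom{k-j-1}{m-1}\binom{k}{m}.
\end{equation*}
It then remains only to reconcile these expressions with the generalized Narayana numbers appearing on the right-hand sides of the theorem.

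Next I would expand the definition of $N^{(r)}_{n,k}$ at $r=j-1$, obtaining $N^{(j-1)}_{k+j-1,m} = \tfrac{j}{k+j}\binom{k+j}{m}\binom{k-1}{m-1}$ and $N^{(j-1)}_{k-1,m} = \tfrac{j}{k}\binom{k}{m}\binom{k-j-1}{m-1}$. For the subtractive identity the match is immediate: substituting the latter into $\tfrac{1}{j}\binom{2k-j}{k-1}N^{(j-1)}_{k-1,m}$ reproduces the displayed formula for $w_{2k-j,k,m}$ verbatim. For the additive identity the problem collapses to the single absorption identity
\begin{equation*}
\frac{1}{k}\binom{k+j-1}{m-1}\binom{k}{m} = \frac{1}{k+j}\binom{k+j}{m}\binom{k-1}{m-1},
\end{equation*}
where both sides equal $\tfrac{1}{m}\binom{k+j-1}{m-1}\binom{k-1}{m-1}$ after applying $\tfrac{1}{k}\binom{k}{m}=\tfrac{1}{m}\binom{k-1}{m-1}$ on the left and $\tfrac{1}{k+j}\binom{k+j}{m}=\tfrac{1}{m}\binom{k+j-1}{m-1}$ on the right. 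The boundary case $m=0$ is then checked separately against the convention $\binom{-1}{-1}=1$; it is exactly this convention that forces $N^{(j-1)}_{k-1,0}=1$ precisely when $k=j$, matching the unique tree counted by $w_{k,k,0}=1$.

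For a genuinely combinatorial reading that mirrors the proof of Theorem \ref{th:relationNandW}, I would instead interpret $N^{(j-1)}_{k+j-1,m}$ as the number of strictly dominating words on $k+j$ copies of $U$ and $k$ copies of $D$ having exactly $m$ $UD$-factors (for $j=1$ this recovers Lemma \ref{lemma:part1_path} after prepending a single $U$ to a Dyck path). Starting from a cyclic composition in $\ccomp_{2k+j,k,m}$, its associated $U/D$ necklace now carries $k+j$ copies of $U$ against $k$ copies of $D$, so the cycle lemma produces exactly $j$ dominating cyclic shifts rather than one, and this is the conceptual source of the factor $1/j$. Combining this interpretation with Lemma \ref{lemma:risecompsize}, in place of the primitive special case used for Theorem \ref{th:relationNandW}, and summing over cyclic classes as in \eqref{e-needthislater}, would recover the identity. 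The main obstacle here is the bookkeeping: since $\gcd(2k+j,k)$ need not be $1$, the cyclic compositions in $\ccomp_{2k+j,k}$ need not be primitive, the count $\lvert\comp_{2k+j,k,m}\rvert = \tfrac{k}{j}N^{(j-1)}_{k+j-1,m}$ carries a ratio $k/j$ that need not be an integer, and so the correspondence between compositions and dominating words is not a naive bijection but must be mediated by the order-weighting of Lemma \ref{lemma:markednecksize}. This is precisely why the substitution route above is the more economical way to finish.
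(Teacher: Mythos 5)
Your proposal is correct, but it takes a genuinely different route from the paper's. Your primary argument specializes the explicit formula of Theorem~\ref{t-explicit} (equivalently, \eqref{e-wcomp} combined with \eqref{e-compbin}) at $n=2k+j$ and $n=2k-j$, expands $N^{(r)}_{n,k}=\frac{r+1}{n+1}\binom{n+1}{k}\binom{n-r-1}{k-1}$ at $r=j-1$, and closes the gap with two absorption identities; I checked the algebra, and it is sound: the subtractive case is indeed a verbatim match, the additive case correctly reduces to $\frac{1}{k}\binom{k+j-1}{m-1}\binom{k}{m}=\frac{1}{k+j}\binom{k+j}{m}\binom{k-1}{m-1}$ for $m\geq 1$, and your handling of $m=0$ via the convention $\binom{-1}{-1}=1$ (forcing $N^{(j-1)}_{k-1,0}=1$ exactly when $j=k$, matching $w_{k,k,0}=1$) is exactly what the statement requires. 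The paper instead stays combinatorial: it defines the map $\phi_{j,k}$ from $\ccomp_{2k+j,k,m}$ to cyclic-equivalence classes of the generalized Dyck words counted by $N^{(j-1)}_{k+j-1,m}$, uses the cycle lemma to prove $\ord(\phi_{j,k}[\mu])/\ord[\mu]=j/k$ (Lemma~\ref{lemma:genbijection}), deduces $\lvert\comp_{2k+j,k,m}\rvert=\frac{k}{j}N^{(j-1)}_{k+j-1,m}$, and plugs this into \eqref{e-wcomp} --- which is precisely the order-weighted correspondence you sketch in your final paragraph and then set aside as bookkeeping. What your route buys is economy and a fully written-out verification of both identities (the paper only sketches the $2k-j$ case via an analogous map $\varphi_{j,k}$); what the paper's route buys is a bijective explanation of the factor $1/j$ and of why generalized Narayana numbers appear at all, in keeping with the paper's stated aim of combinatorial proofs, plus reusable machinery: Lemma~\ref{lemma:genbijection} is invoked again in the proof of Proposition~\ref{p-ccomp}. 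In short, your substitution argument is a valid alternative proof, and your secondary sketch, if completed with the order-weighting of Lemmas~\ref{lemma:markednecksize} and~\ref{lemma:genbijection}, would essentially reproduce the paper's proof.
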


Before proving Theorem ~\ref{thm:simplify}, we first introduce a generalization of Dyck paths and prove a useful lemma.
Consider paths in $\mathbb{Z}^2$ from $(0,0)$ to $(2n-r, r)$, consisting of $n$ up steps $(1,1)$ and $n-r$ down steps $(1,-1)$, that never pass below the horizontal axis.
Denote by $D^{(r)}_{n,k}$ the set of words on the alphabet $\{U, D \}$ corresponding to such paths with exactly $k$ $UD$-factors. 
In~\cite{callan}, Callan describes a proof by Schulte showing that $N_{n, k}^{(r)}$ is the cardinality of $D^{(r)}_{n,k}$.

For $\omega, \nu \in D^{(r)}_{n,k}$, let us write $\omega \sim \nu$ if the words $U \omega$ and $U \nu$ are cyclic shifts of each other. 
The relation $\sim$ is an equivalence relation on $D^{(r)}_{n,k}$, and we denote the set of its equivalence classes by $\tilde{D}^{(r)}_{n,k}$. 
For $[\omega] \in \tilde{D}^{(r)}_{n,k}$, let $\ord[\omega]$ be the number of distinct elements of $D^{(r)}_{n,k}$ contained within the equivalence class $[\omega]$.

\begin{dfn}
For $j,k \geq 1$, let $\phi_{j,k}$ be the map from $\ccomp_{2k+j, k, m}$ to $\tilde{D}^{(j-1)}_{k+j-1, m}$ where $\phi_{j,k}[\mu]$ is obtained via the following algorithm:
\begin{enumerate}
    \item For $[\mu] = [\mu_{1}, \ldots, \mu_{k}] \in \ccomp_{2k+j, k,m}$, set $\omega = U^{\mu_{1}-1}DU^{\mu_{2}-1}D \cdots U^{\mu_{k}-1}D$.
    \item Let $\nu = \nu_{1} \nu_{2} \cdots \nu_{2k+j}$ be any cyclic shift of $\omega$ that is $1$-dominating.
    \item Set $\phi_{j,k}[\mu]$ to be the equivalence class of the subword $\nu_{2} \cdots \nu_{2k+j}$.
\end{enumerate}
\end{dfn}

It is not immediately clear from the above definition whether the map $\phi_{j,k}$ is well-defined, but this will be established in the proof of the following lemma.

\begin{lemma} \label{lemma:genbijection}
For all $j,k \geq 1$, the map $\phi_{j,k}$ is a bijection. Moreover, for all $[\mu] \in \ccomp_{2k+j,k,m}$, we have
\begin{equation*}
    \frac{\ord(\phi_{j,k}[\mu])}{\ord[\mu]} = \frac{j}{k}.
\end{equation*}
\end{lemma}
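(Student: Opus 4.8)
The plan is to establish the lemma in three stages: first verify that $\phi_{j,k}$ is well-defined, then exhibit an explicit inverse to obtain bijectivity, and finally compute the two orders using the cycle lemma together with a periodicity argument. Throughout, the central object is the word $\omega = U^{\mu_{1}-1}DU^{\mu_{2}-1}D\cdots U^{\mu_{k}-1}D$ attached to $[\mu]$, which has $k+j$ copies of $U$ and $k$ copies of $D$, and whose cyclic $UD$-factors correspond exactly to the parts $\mu_i \ge 2$, of which there are $m$.

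For well-definedness, I would first note that replacing $\bar\mu$ by a cyclic shift replaces $\omega$ by a cyclic rotation of itself, so $[\mu]$ determines $\omega$ up to rotation. By the cycle lemma (Lemma~\ref{lemma:cycle}), exactly $j = (k+j) - k$ of the $2k+j$ rotations of $\omega$ are $1$-dominating, and these are all rotations of one another. If $\nu$ is any such rotation, then $\nu_1 = U$ and in fact $\nu_1\nu_2 = UU$, since a prefix $UD$ would violate $1$-domination; hence deleting $\nu_1$ destroys no $UD$-factor, and since $\nu_1 = U$ the number of $UD$-factors of $\nu$ equals the number of cyclic $UD$-factors of $\omega$, namely $m$. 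A short height computation shows $\nu_2\cdots\nu_{2k+j}$ stays weakly above the axis, so it lies in $D^{(j-1)}_{k+j-1,m}$. Finally, because $U(\nu_2\cdots\nu_{2k+j}) = \nu$, any two admissible choices produce words whose $U$-prefixings are rotations of each other, i.e. equivalent under $\sim$; thus $\phi_{j,k}[\mu]$ is a well-defined element of $\tilde{D}^{(j-1)}_{k+j-1,m}$.

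For bijectivity I would construct the inverse directly. Given $[\eta] \in \tilde{D}^{(j-1)}_{k+j-1,m}$, form $U\eta$: since $\eta$ stays weakly above the axis, every prefix of $U\eta$ has strictly more $U$s than $D$s, so $U\eta$ is $1$-dominating, has $k+j$ copies of $U$ and $k$ copies of $D$, and has exactly $m$ cyclic $UD$-factors. Reading $U\eta$ cyclically and recording, for each of its $k$ copies of $D$, one more than the number of $U$s preceding it in its block produces a cyclic composition in $\ccomp_{2k+j,k,m}$; this is independent of the representative $\eta$ because distinct representatives give rotations of $U\eta$. That this inverts $\phi_{j,k}$ is immediate once one observes that $U\eta$ is exactly a $1$-dominating rotation of the word $\omega$ attached to the resulting cyclic composition.

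The main work, and the step I expect to be the obstacle, is the order identity. The structural key is that the distinct elements of $\phi_{j,k}[\mu]$ biject, via $\eta \mapsto U\eta$, with the distinct $1$-dominating rotations of $\omega$: a word $\eta' \in D^{(j-1)}_{k+j-1,m}$ lies in $\phi_{j,k}[\mu]$ exactly when $U\eta'$ is a rotation of $\omega$, and $\eta'$ stays above the axis exactly when $U\eta'$ is $1$-dominating. Hence $\ord(\phi_{j,k}[\mu])$ equals the number of distinct $1$-dominating rotations of $\omega$. To count these, set $d = k/\ord[\mu]$ and recall that $\mu$ is a concatenation of $d$ copies of a primitive composition; correspondingly $\omega = W^{d}$ for the block-word $W$ of that composition, and $W$ is primitive, since a proper power of $W$ would force a proper power of the primitive composition. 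Thus $\omega$ has minimal period $|W| = (2k+j)/d$ and exactly $(2k+j)/d$ distinct rotations, each arising from $d$ of the $2k+j$ starting positions. Since $1$-domination depends only on the rotation, the $j$ positions supplied by the cycle lemma fall into exactly $j/d$ distinct rotations (in particular $d \mid j$, which also follows from $d\mid k$ and $d\mid(2k+j)$). Therefore $\ord(\phi_{j,k}[\mu]) = j/d = j\ord[\mu]/k$, and dividing by $\ord[\mu]$ yields the claimed ratio $j/k$. The delicate point to get right is precisely the bookkeeping between rotations counted with multiplicity (as starting positions, where the cycle lemma applies) and distinct rotations (which the order counts), and this is exactly what the periodicity $\omega = W^{d}$ controls.
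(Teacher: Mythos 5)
Your proof is correct and follows essentially the same route as the paper's: well-definedness via the cycle lemma, and the order identity by grouping the $j$ $1$-dominating cyclic shifts of $\omega$ into distinct words, each occurring $k/\ord[\mu]$ times. You fill in details the paper leaves implicit---the explicit inverse $[\eta]\mapsto[U\eta]$ for bijectivity (where the paper says "straightforward to check") and the minimal-period argument ($\omega = W^{d}$ with $W$ primitive) justifying the multiplicity count---but the underlying argument is the same.
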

\begin{proof}
We first prove that $\phi_{j,k}$ is well-defined. 
Since $\omega$ contains $k+j$ copies of $U$ and $k$ copies of $D$, the cycle lemma guarantees that at least one cyclic shift of $\omega$ is $1$-dominating. 
By construction of $\omega$, the word $\nu$ must contain exactly $m$ $UD$-factors. 
The fact that $\nu$ is $1$-dominating and contains $m$ $UD$-factors implies that its subword $\nu_{2} \cdots \nu_{2k+j}$ is an element of $D^{(j-1)}_{k+j-1, m}$.
From the definition of $\sim$ on $D^{(j-1)}_{k+j-1, m}$, any $1$-dominating cyclic shift of $\omega$ will be sent to the same equivalence class in $\tilde{D}^{(j-1)}_{k+j-1, m}$. 
This same argument also implies that $\phi_{j,k}[\mu]$ does not depend on the representative of $[\mu]$ that is chosen.

Injectivity and surjectivity are straightforward to check from the definition of $\phi_{j,k}$.
By the cycle lemma, there are exactly $j$ cyclic shifts of $\omega$ that are $1$-dominating; among these $j$ words, there are $j \cdot \ord[\mu]/k$ \emph{distinct} cyclic shifts as each of them appears $k/\ord[\mu]$ times.
These $1$-dominating sequences are in bijection with paths in the equivalence class of $\phi_{j,k}[\mu]$ by removing the first $U$ from the sequence. 
Thus, $\ord(\phi_{j,k}[\mu]) = j \cdot \ord[\mu]/k$.
\end{proof}

We are now ready to prove Theorem ~\ref{thm:simplify}.

\begin{proof} [Proof of Theorem ~\ref{thm:simplify}]
From Lemma ~\ref{lemma:genbijection}, we have 
\begin{equation*}
\lvert \comp_{2k+j, k, m} \rvert = \frac{k}{j}\lvert D^{(j-1)}_{k+j-1,m} \rvert = \frac{k}{j} N^{(j-1)}_{k+j-1,m}.
\end{equation*}
Substituting this into ~\eqref{e-wcomp} gives the desired result
\begin{equation*}
    w_{2k+j, k, m} = \frac{1}{j} \binom{2k+j}{k-1} N^{(j-1)}_{k+j-1,m}.
\end{equation*}

The proof for $w_{2k-j,k,m}$ follows similarly by defining an analogous map $\varphi_{j,k}$ from $\ccomp_{2k-j, k, m}$ to $\tilde{D}^{(j-1)}_{k-1, m}$ and reproving Lemma ~\ref{lemma:genbijection} for $\varphi_{j,k}$.
\end{proof}

Lemma ~\ref{lemma:genbijection} naturally leads to the following generalization of Lemmas ~\ref{lemma:part1_path} and ~\ref{lemma:part1_path2}, which expresses the number of cyclic compositions in $\ccomp_{2k\pm j, k,m}$ in terms of $r$-generalized Narayana numbers. 
Below, $\varphi$ denotes Euler's totient function.

\begin{prop} \label{p-ccomp}
    Let $k \geq 1$ and $m,j\geq 0$, and let $d = \mathsf{gcd}(k,m,j)$.\footnote{If $m=0$ or $j=0$, then $\mathsf{gcd}(k,m,j)$ is defined to be the greatest common divisor of the nonzero numbers among $k$, $m$, and $j$.}\vspace{5bp}
    \begin{enumerate}
    \item[(a)] If $j \geq 1$, we have $\displaystyle{\lvert \ccomp_{2k+j, k, m} \rvert = \frac{1}{j} \sum_{s\mid d} \varphi(s)N^{(j/s-1)}_{(k+j)/s-1, \, m/s}}$.\vspace{5bp}
    \item[(b)] If $j=0$, we have $\displaystyle{\lvert \ccomp_{2k, k, m} \rvert = \frac{1}{k} \sum_{s\mid d} \varphi(s) \binom{\frac{k}{s}-1}{\frac{m}{s}-1}\binom{\frac{k}{s}}{\frac{m}{s}}}$.
    \vspace{5bp}
    \item[(c)] If $1 \leq j \leq k$, we have $\displaystyle{\lvert \ccomp_{2k-j, k, m} \rvert = \frac{1}{j} \sum_{s\mid d} \varphi(s)N^{(j/s-1)}_{k/s-1,\, m/s}}$.
    \end{enumerate}
\end{prop}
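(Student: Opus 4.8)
The plan is to count cyclic compositions by relating them to ordinary (linear) compositions via an orbit-counting argument, then invoke Lemma~\ref{lemma:genbijection} (and its $j=0$ analogue coming from \eqref{e-compbin}) to convert counts of linear compositions into $r$-generalized Narayana numbers. The key observation is that each cyclic composition $[\mu] \in \ccomp_{2k+j,k,m}$ corresponds to an orbit of the cyclic shift action on $\comp_{2k+j,k,m}$, and the orbit of $[\mu]$ has size $\ord[\mu]$. Thus $\lvert \ccomp_{2k+j,k,m} \rvert = \sum_{[\mu]} 1$, which I will compute by grouping compositions according to their primitive period. First I would recall that every $\mu \in \comp_{2k+j,k,m}$ arises as the concatenation of $s$ copies of a primitive composition, where $s$ is a common divisor of $k$, $m$, and $2k+j$; since $\gcd(k,2k+j)=\gcd(k,j)$, this forces $s \mid \gcd(k,m,j) = d$. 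Each primitive cyclic composition of period $k/s$ contributes exactly $\varphi(s)$ compositions of a given ``repetition type,'' which is where the totient enters.

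The cleaner route, which I would actually carry out, is the standard Burnside/necklace-counting identity: the number of cyclic compositions equals
\begin{equation*}
\lvert \ccomp_{2k+j,k,m} \rvert = \frac{1}{k} \sum_{s \mid d} \varphi(s)\, \bigl\lvert \comp_{(2k+j)/s,\, k/s,\, m/s} \bigr\rvert,
\end{equation*}
where the inner count records linear compositions fixed by a shift of order $k/s$. This follows because a linear composition is fixed by the shift of order $k/s$ exactly when it is $s$-fold periodic, and such periodic compositions are in bijection with $\comp_{(2k+j)/s,k/s,m/s}$; summing over the $\varphi(s)$ generators of each cyclic subgroup and dividing by the group order $k$ gives the orbit count. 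With this identity in hand, for part (a) I would substitute the evaluation of $\lvert \comp_{(2k+j)/s,k/s,m/s}\rvert$ via Lemma~\ref{lemma:genbijection}, which yields $\tfrac{k/s}{j/s}\, N^{(j/s-1)}_{(k+j)/s-1,\,m/s}$; the factors of $k/s$ and $s$ combine so that the prefactor $\tfrac{1}{k}\varphi(s)\cdot\tfrac{k/s}{j/s}$ collapses to $\tfrac{1}{j}\varphi(s)$, giving the stated formula. Part (c) is identical using the map $\varphi_{j,k}$, and part (b) uses \eqref{e-compbin} directly in place of the generalized bijection, producing the binomial product $\binom{k/s-1}{m/s-1}\binom{k/s}{m/s}$ with prefactor $\tfrac{1}{k}\varphi(s)$.

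The main obstacle I anticipate is justifying the periodicity bookkeeping cleanly—specifically, verifying that a linear composition in $\comp_{2k+j,k,m}$ fixed by a shift of order $k/s$ really does correspond bijectively to a composition in $\comp_{(2k+j)/s,k/s,m/s}$, and that the divisibility constraint $s \mid \gcd(k,m,j)$ is both necessary and sufficient for such periodic compositions to exist. This requires checking that the sum $2k+j$, the number of parts $k$, and the number of parts at least $2$ (namely $m$) all scale by the same factor $s$ under taking the primitive period, and in particular that $s \mid (2k+j)$ is equivalent to $s \mid j$ given $s \mid k$. Once this compatibility is established, the remaining steps are routine algebraic simplification of the prefactors, and I would relegate the part (c) argument to a remark that it proceeds ``completely analogously,'' mirroring how Lemma~\ref{cor:part2_trees2k-1} was handled earlier.
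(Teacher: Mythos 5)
Your proposal is correct, and it reaches the paper's key intermediate identity
\begin{equation*}
\lvert \ccomp_{2k+j,k,m} \rvert \;=\; \frac{1}{k}\sum_{s\mid d}\varphi(s)\,\bigl\lvert \comp_{(2k+j)/s,\,k/s,\,m/s}\bigr\rvert
\end{equation*}
by a genuinely different route. The paper never invokes Burnside: it introduces primitive compositions $\pcomp$, writes $\lvert \ccomp_{2k+j,k,m}\rvert = \sum_{\ell\mid d}\frac{\ell}{k}\lvert \pcomp_{(2k+j)/\ell,\,k/\ell,\,m/\ell}\rvert$, counts primitives by M\"obius inversion ($g(\ell)=\sum_{s\mid\ell}\mob(s)f(\ell/s)$), and then collapses the resulting double sum using $\varphi(s)=\sum_{q\mid s}\mob(q)\,s/q$. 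You instead apply the Cauchy--Frobenius (Burnside) lemma to the action of $C_k$ on $\comp_{2k+j,k,m}$: the $\varphi(s)$ elements of order $s$ each fix exactly the $s$-fold periodic compositions, which biject with $\comp_{(2k+j)/s,k/s,m/s}$ and exist only when $s\mid d$ (using $s\mid k \Rightarrow (s\mid 2k\pm j \Leftrightarrow s\mid j)$, which you correctly flag as the point to verify). Your route is shorter and avoids the double-sum rearrangement entirely; the paper's route has the advantage of staying within the primitive-composition framework it builds anyway (Lemma \ref{lemma:2k+1_primitive}, the Catalan-number corollary) and of being self-contained arithmetic rather than citing a group-theoretic lemma. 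From that identity onward the two arguments coincide: substitute $\lvert\comp_{2k'+j',k',m'}\rvert = \tfrac{k'}{j'}N^{(j'-1)}_{k'+j'-1,m'}$ from Lemma \ref{lemma:genbijection} for parts (a) and (c) (via $\varphi_{j,k}$), and \eqref{e-compbin} for part (b), so the prefactor $\tfrac{1}{k}\varphi(s)\cdot\tfrac{k}{j}$ collapses to $\tfrac{\varphi(s)}{j}$ exactly as you compute. One small slip to fix in the write-up: a composition fixed by the shift \emph{by} $k/s$ positions (an element of \emph{order} $s$, not ``order $k/s$'') is the one that is $s$-fold periodic; your bookkeeping uses the correct correspondence, but the phrase ``fixed by a shift of order $k/s$'' misnames the group element.
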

\begin{proof}
    Let us call an (ordinary) composition $\mu$ \emph{primitive} if $[\mu]$ is primitive, and let $\pcomp_{n,k,m}$ denote the set of primitive compositions of $n$ with $k$ parts with exactly $m$ parts at least two.
    Let $1\leq k \leq m$ and $j\geq 0$, and let $d = \mathsf{gcd}(k,m,j)$.
    Given $\ell \mid d$, define
    $$f(\ell) = \lvert \comp_{(2k+j)\ell/d, \, k\ell/d, \, m\ell/ d} \rvert \quad \text{and} \quad g(\ell) = \lvert \pcomp_{(2k+j)\ell/d,\, k\ell/d,\, m\ell /d} \rvert.$$
    Every composition can be uniquely decomposed as a concatenation of one or more copies of a primitive composition, which leads to the formula $f(\ell) = \sum_{s\mid \ell} g(s)$.
    By M\"obius inversion, we then have $g(\ell) = \sum_{s\mid \ell} \mob(s) f(\ell/s)$ where $\mob$ is the M\"obius function.
    Observe that 
    \begin{equation*}
    \lvert \ccomp_{2k+j, k, m} \rvert = \sum_{\ell\mid d} \frac{\ell}{k} \lvert \pcomp_{(2k+j)/\ell, \, k/\ell, \, m/\ell} \rvert;
    \end{equation*}
    after all, every cyclic composition in $\ccomp_{2k+j, k, m}$ is a concatenation of $\ell$ copies of a primitive cyclic composition with $k/\ell$ parts for some $\ell$ dividing $d$, and this primitive cyclic composition is the cyclic equivalence class of $k/\ell$ elements of $\pcomp_{(2k+j)/\ell, \,k/\ell , \,m/\ell}$. We then have
    {\allowdisplaybreaks\begin{align*}
    \lvert \ccomp_{2k+j, k, m} \rvert &= \sum_{\ell\mid d} \frac{\ell}{k} \lvert \pcomp_{(2k+j)/\ell, \, k/\ell, \, m/\ell} \rvert \\
    &= \sum_{\ell\mid d} \frac{\ell}{k} g\Big(\frac{d}{\ell}\Big) \\
    &= \frac{1}{k}\sum_{\ell\mid d} \sum_{q\mid (d/\ell)} \mob(q) \ell f\Big(\frac{d}{\ell q} \Big) \\
    &= \frac{1}{k}\sum_{s\mid d} \sum_{\ell q = s} \mob(q) \frac{s}{q} f\Big(\frac{d}{s}\Big). \\
    &= \frac{1}{k}\sum_{s\mid d} \varphi(s) f\Big(\frac{d}{s}\Big),
    \end{align*}}where the last step uses the well-known identity $\varphi(s) = \sum_{q\mid s} \mob(q) s/q$.
    If $j\geq 1$, then we have 
    $$f\Big(\frac{d}{s} \Big) = \lvert \comp_{2(k/s)+j/s,\, k/s,\, m/s} \rvert = \frac{k}{j} N^{(j/s-1)}_{(k+j)/s-1,\, m/s}$$ 
    by Lemma ~\ref{lemma:genbijection}, and if $j=0$, then we instead have
    $$f\Big(\frac{d}{s} \Big) = \lvert \comp_{2(k/s),\, k/s,\, m/s} \rvert = \binom{\frac{k}{s}-1}{\frac{m}{s}-1}\binom{\frac{k}{s}}{\frac{m}{s}}$$
    by \eqref{e-compbin}; substituting appropriately completes the proof of parts (a) and (b). 
    We omit the proof of (c) as it is similar to that of (a).
\end{proof}

\begin{rmk}
    Proposition ~\ref{p-ccomp} has an interesting interpretation related to permutation enumeration, as $\lvert \ccomp_{n, k, m} \rvert$ is the number of distinct cyclic descent sets among cyclic permutations of length $n$ with $k$ cyclic descents and $m$ cyclic peaks \textup{(}for all $1 \leq k<n$\textup{)}; see \cite{sagan2, sagan1, liang, sagan3} for definitions.
    In particular, when $j\neq 0$ and $\mathsf{gcd}(k,j,m) = 1$, the number of cyclic descent classes among such cyclic permutations of length $2k+j$ is equal to a generalized Narayana number divided by $j$.
    The case $j=\pm 1$ \textup{(}Lemmas ~\ref{lemma:part1_path} and ~\ref{lemma:part1_path2}\textup{)} yields a new interpretation of the \textup{(}ordinary\textup{)} Narayana numbers $N_{k,m}$ in terms of cyclic descent classes.
\end{rmk}


\section{Polynomials}\label{polynomials}


\subsection{Real-rootedness}
A natural question is whether or not the sequence $\{ w_{n,k,m} \}_{0 \leq m \leq k}$, for a fixed $n$ and $k$, is unimodal. 
In other words, for fixed $n$ and $k$, does there always exist $0 \leq j \leq k$ such that 
$$w_{n,k,0} \leq w_{n,k, 1} \leq \cdots \leq w_{n,k,j} \geq w_{n, k, j+1} \geq \cdots \geq w_{n,k,k}?$$
One way to prove unimodality results in combinatorics is through real-rootedness. 
A polynomial with coefficients in $\mathbb{R}$ is said to be \emph{real-rooted} if all of its roots are in $\mathbb{R}$. 
(We use the convention that constant polynomials are also real-rooted.)
It is well known that if a polynomial with non-negative coefficients is real-rooted, then the sequence of its coefficients is unimodal (see ~\cite{Branden2015}, for example). 

Let $W_{n,k}(t)$ be the polynomial defined by
\begin{align*}
    W_{n,k}(t) = \sum_{m=0}^{k} w_{n,k,m}t^m.
\end{align*}
In what follows, we prove that the polynomials $W_{n,k}(t)$ are real-rooted, thus implying the unimodality of the sequences $\{ w_{n,k,m} \}_{0 \leq m \leq k}$.

We begin with a simple result involving the roots of $W_{n,k}(t)$.

\begin{prop}\label{prop:reflection}
For all $1 \leq k \leq n-1$, the polynomials $W_{n,k}(t)$ and $W_{n,n-k}(t)$ have the same roots. 
\end{prop}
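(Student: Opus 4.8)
The plan is to prove Proposition~\ref{prop:reflection} directly from the explicit formula in Theorem~\ref{t-explicit}, by showing that $W_{n,k}(t)$ and $W_{n,n-k}(t)$ are (nonzero) scalar multiples of a common polynomial. First I would observe that the hypothesis $1\leq k\leq n-1$ guarantees $k\neq n$ and $n-k\neq n$, so the exceptional case $m=0,\ n=k$ of Theorem~\ref{t-explicit} never contributes: both $W_{n,k}(t)$ and $W_{n,n-k}(t)$ have zero constant term and are therefore divisible by $t$. Substituting $j=m-1$ and factoring out the scalar $\tfrac1k\binom{n}{k-1}$ and the variable $t$, I would write
\[
W_{n,k}(t) = \frac{1}{k}\binom{n}{k-1}\, t \sum_{j=0}^{k-1}\binom{n-k-1}{j}\binom{k}{j+1}t^{j},
\]
where the factor $\binom{n-k-1}{j}$ automatically vanishes for $j>n-k-1$, so the constraint $k+m\leq n$ requires no separate bookkeeping. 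Set $P_{n,k}(t)=\sum_{j\geq 0}\binom{n-k-1}{j}\binom{k}{j+1}t^{j}$, so that $W_{n,k}(t)$ and $W_{n,n-k}(t)$ have the same roots as $t\,P_{n,k}(t)$ and $t\,P_{n,n-k}(t)$ respectively.

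The crux is then a single termwise identity: for all $j\geq 0$,
\[
\binom{n-k-1}{j}\binom{k}{j+1} = \frac{k}{n-k}\,\binom{k-1}{j}\binom{n-k}{j+1}.
\]
I would verify this by expanding both sides into factorials; the factors $(k-1-j)!$ and $(n-k-1-j)!$ appearing in the denominators cancel between the two sides, leaving precisely the constant ratio $k/(n-k)$, which is independent of $j$. Since this constant is the same for every coefficient, it follows that $P_{n,k}(t)=\tfrac{k}{n-k}P_{n,n-k}(t)$; that is, $P_{n,k}$ and $P_{n,n-k}$ are nonzero scalar multiples of one another and hence have identical roots (with identical multiplicities).

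Combining the two steps, both $W_{n,k}(t)$ and $W_{n,n-k}(t)$ equal a nonzero constant times $t\,P_{n,n-k}(t)$, so they share all their roots: the root $t=0$ (of multiplicity exactly one, since the $m=1$ coefficient is $\binom{n}{k-1}\neq 0$) together with the common roots of $P_{n,n-k}$. I do not anticipate a serious obstacle here, as the argument is a direct manipulation of the closed formula; the only point needing care is confirming that the automatic vanishing of the binomial coefficients makes the two polynomials have the same support and degree, and that the factorial cancellation producing the $j$-independent ratio $k/(n-k)$ is carried out correctly.
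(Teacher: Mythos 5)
Your proof is correct and follows essentially the same route as the paper: the paper's proof simply observes that $w_{n,k,m}=\frac{k(k+1)}{(n-k)(n-k+1)}\,w_{n,n-k,m}$, verified coefficientwise from Theorem~\ref{t-explicit}, which is exactly the proportionality your factorial computation establishes (your ratio $\binom{n}{k-1}/\binom{n}{k+1}$ equals this constant). The extra bookkeeping about vanishing binomial coefficients and the multiplicity of the root $t=0$ is fine but not needed beyond showing the two polynomials are nonzero scalar multiples of each other.
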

\begin{proof}
This follows from the fact that $w_{n,k,m}= \frac{k(k+1)}{(n-k)(n-k+1)} w_{n,n-k,m}$, which is readily verified from Theorem ~\ref{t-explicit}.
\end{proof}

To prove the real-rootedness of the $W_{n,k}(t)$, we make use of Malo's result regarding the roots of the Hadamard product of two real-rooted polynomials.

\begin{thm}[\cite{malo}]\label{thm:Malo} 
Let $f(t) = \sum_{i = 0}^{m} a_{i} t^{i}$ and $g(t) = \sum_{i=0}^{n} b_{i} t^{i}$ be real-rooted polynomials in $\mathbb{R}[t]$ such that all the roots of $g$ have the same sign. 
Then their Hadamard product
$$f\ast g = \sum_{i = 0}^{\ell} a_{i}b_{i} t^{i},$$ 
where $\ell = \min\{m,n\}$, is real-rooted.
\end{thm}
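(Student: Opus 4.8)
The plan is to deduce this theorem of Malo from the classical Schur--Szeg\H{o} composition theorem, whose analytic core is the Grace--Walsh--Szeg\H{o} coincidence theorem (equivalently, Grace's apolarity theorem); I would treat that coincidence theorem as the single black box and do the rest by hand. First I would normalize the sign hypothesis. If every root of $g$ is nonnegative, I replace $f$ by $f(-t)$ and $g$ by $g(-t)$: the latter then has only nonpositive roots, both polynomials remain real-rooted, and since the coefficientwise product of $f(-t)=\sum(-1)^k a_k t^k$ and $g(-t)=\sum(-1)^k b_k t^k$ equals $(f\ast g)(t)$, it suffices to treat the case in which all roots of $g$ are $\le 0$. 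In that case the coefficient sequence $(b_0,\dots,b_n)$ is a P\'olya frequency sequence, and $g(t)=b_0\prod_{i=1}^{n}(1+\lambda_i t)$ with each $\lambda_i\ge 0$ (after splitting off any factor $t^j$), which is the form in which the composition theorem applies most cleanly.

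The engine is Szeg\H{o}'s composition theorem: if
\[
A(t)=\sum_{k=0}^{N}\binom{N}{k}\alpha_k t^k \quad\text{and}\quad B(t)=\sum_{k=0}^{N}\binom{N}{k}\beta_k t^k
\]
are real-rooted of degree $N$ and all roots of $B$ have one sign, then $\sum_{k=0}^{N}\binom{N}{k}\alpha_k\beta_k t^k$ is real-rooted. Assuming for the moment that $f$ and $g$ have the \emph{same} degree $N=m=n$, I would set $\alpha_k=a_k/\binom{N}{k}$ and $\beta_k=b_k/\binom{N}{k}$ so that $A=f$ and $B=g$; Szeg\H{o}'s theorem then gives real-rootedness of $\sum_k \binom{N}{k}^{-1}a_k b_k\, t^k$, and I recover the genuine Hadamard product by multiplying the coefficients back by $\binom{N}{k}$. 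This last rescaling is a \emph{multiplier-sequence} operation: since $\sum_k \binom{N}{k}t^k/k!$ is, up to the reflection $t\mapsto -t$, a Laguerre polynomial and hence real-rooted, the P\'olya--Schur theorem shows that multiplication by $(\binom{N}{k})_{k=0}^{N}$ preserves real-rootedness. It is reassuring that the hypothesis that $B$ (that is, $g$) has roots of a single sign is consumed in exactly the place where it is needed, which is good evidence that this is the correct tool.

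The hard part will be the degree mismatch $m\neq n$, where the product is truncated at $\ell=\min\{m,n\}$. One cannot simply truncate the higher-degree factor to degree $\ell$ and invoke the equal-degree case, because sections of real-rooted polynomials need not be real-rooted: for instance, the degree-$2$ section $1+5t+10t^2$ of $(1+t)^5$ has non-real roots. Instead I would apply Grace--Walsh--Szeg\H{o} \emph{projectively}, treating the lower-degree polynomial as carrying its missing roots ``at infinity,'' which is precisely the setting the coincidence theorem is built to handle and which makes the degree-$\ell$ truncation automatic (no honest coefficients are discarded, since the coefficients of $f\ast g$ beyond index $\ell$ already vanish). Carrying the single-sign hypothesis and the binomial rescaling correctly through this projective bookkeeping is the only delicate point; all genuine content sits in the cited composition/apolarity theorem. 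I would close by stressing that the single-sign hypothesis is indispensable, since without it the Hadamard product of two real-rooted polynomials can acquire non-real roots.
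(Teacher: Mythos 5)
First, note that the paper itself offers no proof of this statement: it is Malo's classical theorem, cited from the literature and invoked as a black box in the proof of Theorem \ref{thm:real-rooted}. So your proposal can only be measured against the standard proofs, and your route --- reduce to Szeg\H{o}'s composition theorem, whose analytic core is Grace--Walsh--Szeg\H{o} --- is precisely the classical one. The reflection step is right (the Hadamard product of $f(-t)$ and $g(-t)$ equals $(f\ast g)(t)$, so one may assume all roots of $g$ are negative), and the equal-degree argument is sound: Szeg\H{o} applied with $\alpha_k=a_k/\binom{N}{k}$ and $\beta_k=b_k/\binom{N}{k}$ gives real-rootedness of $\sum_k a_kb_k\binom{N}{k}^{-1}t^k$, and restoring the factor $\binom{N}{k}$ is legitimate because $\sum_{k=0}^N \binom{N}{k}t^k/k! = L_N(-t)$ has only negative real roots, so $\bigl(\binom{N}{k}\bigr)_k$ is a multiplier sequence by P\'olya--Schur. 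Two caveats: P\'olya--Schur is a second black box, contrary to your ``single black box'' claim (if you want literally one, apply Szeg\H{o} a second time with $B(t)=\sum_k\binom{N}{k}^2t^k$, which is itself real-rooted with negative roots, being the Szeg\H{o} composition of $(1+t)^N$ with itself); and your parenthetical about splitting off a factor $t^j$ is both unnecessary (roots ``of the same sign'' are nonzero) and unsound as written, since the shifted polynomial $\sum_i a_{i+j}t^i$ that such a reduction would require need not be real-rooted.

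The genuine gap is exactly the step you defer: unequal degrees. This is not a removable technicality here --- it is the case the paper actually uses, since Theorem \ref{thm:real-rooted} applies the present theorem to $f(t)=t(t-1)^{n-k-1}$ and $g(t)=(t-1)^k$, whose degrees $n-k$ and $k$ differ unless $n=2k$. ``Treating the missing roots as sitting at infinity'' is the right instinct, but you never specify which formulation of Grace--Walsh--Szeg\H{o} accommodates this or how the truncation at $\ell$ falls out, and that is where the remaining content lies. Fortunately no projective formalism is needed: assume $m\le n$, put $f_\epsilon(t)=f(t)(1+\epsilon t)^{n-m}$ for $\epsilon>0$, which is real-rooted of degree exactly $n$, and run your equal-degree argument on the pair $(f_\epsilon,g)$. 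As $\epsilon\to 0^+$ the coefficients of $f_\epsilon$ tend to those of $f$ (padded by zeros), so $f_\epsilon\ast g\to f\ast g$ coefficientwise; since real-rooted polynomials of degree at most $n$ are closed under coefficientwise limits (Hurwitz's theorem), $f\ast g$ is real-rooted, and the truncation at $\ell=\min\{m,n\}$ is automatic because $a_kb_k=0$ for $k>\ell$. With that paragraph supplied your proof is complete; as it stands, the strategy is correct but one load-bearing step is missing.
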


\begin{thm}\label{thm:real-rooted}
For all $n, k \geq 0$, the polynomials $W_{n,k}(t)$ are real-rooted.
\end{thm}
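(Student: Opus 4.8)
The plan is to reduce the theorem to Malo's theorem (Theorem~\ref{thm:Malo}) by exhibiting $W_{n,k}(t)$, up to a positive scalar, as a Hadamard product of two real-rooted polynomials. First I would dispose of the degenerate cases. If $k=0$ or $n<k$, then by Theorem~\ref{t-explicit} the polynomial $W_{n,k}(t)$ is either the constant $1$ (when $n=k=0$) or the zero polynomial, both of which are real-rooted by convention. If $n=k\geq 1$, then Theorem~\ref{t-explicit} forces $w_{k,k,m}=0$ for all $m\geq 1$ while $w_{k,k,0}=1$, so $W_{k,k}(t)=1$ is again constant. This leaves the main case $n>k\geq 1$.

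In the main case, Theorem~\ref{t-explicit} gives $w_{n,k,m}=\frac{1}{k}\binom{n}{k-1}\binom{n-k-1}{m-1}\binom{k}{m}$ for every $m\geq 0$, where the $m=0$ term vanishes because $n\neq k$ (and the binomials vanish automatically for $m$ outside the range $1\leq m\leq\min\{k,n-k\}$, so a single formula suffices). Pulling out the positive constant $\frac{1}{k}\binom{n}{k-1}$, which does not affect the location of the roots, it suffices to prove that
\[
P(t)=\sum_{m}\binom{n-k-1}{m-1}\binom{k}{m}t^m
\]
is real-rooted. The key observation is that $P(t)$ is precisely the Hadamard product of
\[
f(t)=\sum_{m}\binom{n-k-1}{m-1}t^m=t(1+t)^{n-k-1}
\qquad\text{and}\qquad
g(t)=\sum_{m}\binom{k}{m}t^m=(1+t)^k.
\]

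Both $f$ and $g$ are real-rooted, being products of linear factors, and every root of $g$ equals $-1$, so all roots of $g$ share the same (negative) sign. This is exactly the hypothesis Malo's theorem places on one of the two factors, so applying Theorem~\ref{thm:Malo} shows that $P(t)=(f\ast g)(t)$ is real-rooted, whence $W_{n,k}(t)$ is real-rooted as well. The unimodality of $\{w_{n,k,m}\}_{0\leq m\leq k}$ then follows immediately from the cited fact that a real-rooted polynomial with non-negative coefficients has unimodal coefficients.

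I expect the only real subtlety, rather than a genuine obstacle, to lie in the index bookkeeping: one must check that the shifted binomial $\binom{n-k-1}{m-1}$ is the $m$-th coefficient of $t(1+t)^{n-k-1}$ so that the Hadamard product reproduces $P(t)$ exactly, and one must take care to designate $g=(1+t)^k$, not $f$, as the factor all of whose roots share a sign, since $f$ carries the extra root at $0$ (which is neither positive nor negative).
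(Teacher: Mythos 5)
Your proof is correct and takes essentially the same approach as the paper: the identical case analysis, followed by an application of Malo's theorem (Theorem~\ref{thm:Malo}) to a Hadamard product factorization of $\sum_{m}\binom{n-k-1}{m-1}\binom{k}{m}t^m$. The only difference is cosmetic---the paper takes $f(t)=t(t-1)^{n-k-1}$ and $g(t)=(t-1)^k$, whose Hadamard product is $(-1)^n$ times the target polynomial (harmless for real-rootedness, though left implicit there), whereas your choice of $(1+t)$-factors reproduces the coefficients exactly with no sign to track.
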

\begin{proof}
From Theorem ~\ref{t-explicit}, we have
\begin{equation}
W_{n,k}(t) = \begin{cases}
    0, & \text{ if } n < k,\\
    1, & \text{ if } n = k,\\
    \frac{1}{k}\binom{n}{k-1}\sum_{m = 1}^{\text{min}\{k, n-k \}} \binom{n-k-1}{m-1} \binom{k}{m} t^m, & \text{ if } n > k.
\end{cases}
\end{equation}
Thus it suffices to check that the polynomial $\sum_{m = 1}^{\text{min}\{k, n-k\}} \binom{n-k-1}{m-1}\binom{k}{m} t^m$ is real-rooted, which follows from applying Theorem ~\ref{thm:Malo} to $f(t) = t(t-1)^{n-k-1}$ and $g(t) = (t-1)^k$.
\end{proof}

More generally, we conjecture the polynomials $W_{n,k}(t)$ satisfy stronger conditions which we presently define. 
For two real-rooted polynomials $f$ and $g$, let $\{u_i\}$ be the roots of $f$ and $\{v_i\}$ the roots of $g$, both in non-increasing order. We say that $g$ \emph{interlaces} $f$, denoted by $g \rightarrow f$, if either $\deg(f) = \deg(g)+1  = d$ and 
$$u_{d} \leq v_{d-1} \leq u_{d-1} \leq \cdots \leq v_{1} \leq u_{1},$$
or if $\deg(f) = \deg(g)  = d$ and 
$$v_{d} \leq u_{d} \leq v_{d-1} \leq u_{d-1} \leq \cdots \leq v_{1} \leq u_{1}.$$
(By convention, we assume that a constant polynomial interlaces with every real-rooted polynomial.)
We say that a sequence of real-rooted polynomials $f_{1}, f_{2}, \ldots$ is a \emph{Sturm sequence} if $f_{1} \rightarrow f_{2} \rightarrow \cdots.$
Moreover, a finite sequence of real-rooted polynomials $f_{1}, f_{2}, \ldots, f_{n}$ is said to be \emph{Sturm-unimodal} if there exists $1 \leq j \leq n$ such that 
$$f_{1} \rightarrow f_{2} \rightarrow \cdots \rightarrow\ f_{j} \leftarrow f_{j+1} \leftarrow \cdots \leftarrow f_{n}.$$

\begin{conj}
    For any fixed $k \geq 1$, the polynomials $\{W_{n,k}(t)\}_{n \geq k}$ form a Sturm sequence.
\end{conj}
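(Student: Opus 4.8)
The plan is to prove the conjecture by induction on $n$, reducing the interlacing statement to a concrete recurrence among the polynomials. Since the relation $g \to f$ depends only on the locations of the roots, I may discard the positive prefactor in Theorem~\ref{t-explicit} and work instead with the reduced polynomials
\[
Q_{n,k}(t) = \sum_{m=1}^{\min\{k,\,n-k\}} \binom{n-k-1}{m-1}\binom{k}{m}\,t^m,
\]
which satisfy $W_{n,k}(t) = \frac{1}{k}\binom{n}{k-1}Q_{n,k}(t)$ for $n>k$ and are real-rooted by Theorem~\ref{thm:real-rooted}. Because every nonzero coefficient is positive, each $Q_{n,k}$ has a root at $0$ together with negative roots, and $\deg W_{n,k} = \min\{k,n-k\}$. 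Thus as $n$ grows the degree increases by one for $k \le n < 2k$ and then plateaus at $k$ for $n \ge 2k$; the Sturm condition therefore demands the ``degree-raising'' form of interlacing in the first regime and the ``equal-degree'' form in the second. The base of the induction is immediate: $W_{k,k}(t)=1$ interlaces $W_{k+1,k}(t)$ by the convention that a constant polynomial interlaces every real-rooted polynomial.

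For the inductive step I would exploit an explicit recurrence in $n$. A short computation using Pascal's rule on $\binom{n-k-1}{m-1}$ yields
\[
Q_{n+1,k}(t) = (k+1)\,F(t) - (t-1)\,F'(t), \qquad F(t) = \int_0^t Q_{n,k}(u)\,du,
\]
so that $F' = Q_{n,k}$; equivalently, one can identify $Q_{n,k}(t) = k\,t\cdot{}_2F_1\!\left(-(n-k-1),-(k-1);2;t\right)$ and invoke a Gauss contiguous relation in the first parameter to obtain a three-term recurrence $\alpha(t)Q_{n-1,k}+\beta(t)Q_{n,k}+\gamma(t)Q_{n+1,k}=0$ with $\alpha,\beta,\gamma$ of degree at most $1$. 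Either form identifies these polynomials, after an affine change of variable sending the roots into $(-1,1)$, with Jacobi polynomials, for which a substantial interlacing literature is available.

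From here the interlacing $Q_{n,k}\to Q_{n+1,k}$ should follow from a standard criterion: either the classical fact that members of a sequence obeying a three-term recurrence with the appropriate sign pattern form a Sturm sequence, or a direct Rolle/sign-change argument applied to the operator identity above, evaluating $Q_{n+1,k}$ at the consecutive roots of $Q_{n,k}$ and checking the sign alternation. I would carry out the sign analysis regime by regime, using that all nonzero roots are negative and that $0$ is the common largest root of $Q_{n,k}$ and $Q_{n+1,k}$.

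I expect the main obstacle to be the plateau regime $n \ge 2k$, where $Q_{n,k}$ and $Q_{n+1,k}$ share the same degree $k$. In the degree-raising regime $k \le n < 2k$ the recurrence multiplies by a genuinely linear factor, and the interlacing is essentially the classical derivative-type Rolle argument; but on the plateau the leading ($t^{k+1}$) term produced by the operator cancels---indeed the leading coefficient of $(k+1)F-(t-1)F'$ vanishes exactly when $\deg Q_{n,k}=k$---so the degree-raising mechanism degenerates and a finer comparison of the two degree-$k$ root sets is required. Handling this degeneration cleanly, together with the transition at $n=2k$ where the degree first stabilizes, is where the real work lies; pinning down the precise sign conditions in the contiguous relation (or, equivalently, selecting the correct Jacobi interlacing theorem for simultaneously varying degree and parameter) is the crux of the argument.
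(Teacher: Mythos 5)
First, a point of context: the statement you are trying to prove is left \emph{open} in the paper --- it appears there as a conjecture, with no proof given --- so your attempt cannot be measured against an existing argument, and must stand entirely on its own. On its own terms, what you have is a correct setup followed by a deferral of the actual content. The reduction to the polynomials $Q_{n,k}(t)=\sum_{m}\binom{n-k-1}{m-1}\binom{k}{m}t^m$ is legitimate (positive constant factors do not move roots), the degree analysis $\deg W_{n,k}=\min\{k,n-k\}$ and the two regimes are right, and your operator identity does check out: writing $a_m=\binom{n-k-1}{m-1}\binom{k}{m}$ and $b_m=\binom{n-k}{m-1}\binom{k}{m}$, Pascal's rule gives $b_m=a_m+\frac{k+1-m}{m}a_{m-1}$, which is exactly the coefficient identity encoded by $Q_{n+1,k}=(k+1)F-(t-1)F'$ with $F(t)=\int_0^t Q_{n,k}(u)\,du$. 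The identification $Q_{n,k}(t)=k\,t\,{}_2F_1\bigl(-(n-k-1),-(k-1);2;t\bigr)$ is also correct.

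The gap is that the interlacing itself --- the assertion of the conjecture --- is never established in either regime, and the ``standard criteria'' you appeal to do not apply as stated. In the plateau regime $n\geq 2k$ the polynomials all have equal degree $k$, so they cannot satisfy a Favard-type three-term recurrence of an orthogonal polynomial sequence, and the Jacobi parameters implicit in your hypergeometric identification are negative integers, outside the range where the classical parameter-interlacing theorems for Jacobi polynomials hold (this is precisely where degeneration of degree occurs). You acknowledge this regime is unresolved, but the degree-raising regime $k\leq n<2k$ is not actually proven either: your Rolle argument requires the sign of $Q_{n+1,k}$ to alternate at consecutive roots of $Q_{n,k}$, and since $Q_{n+1,k}(r)=(k+1)F(r)$ at each root $r$ of $Q_{n,k}=F'$, this amounts to the critical values of the antiderivative $F$ alternating in sign --- equivalently, to $F$ itself being real-rooted with simple roots. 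Integration does not preserve real-rootedness in general, so this is a genuine claim requiring proof, not a consequence of Rolle's theorem; you would also need simplicity of the roots of $Q_{n,k}$, which is asserted nowhere. In short, the machinery you set up is sound and potentially useful, but the crux --- the sign analysis that would convert the recurrence into interlacing --- is missing in both regimes, so the conjecture remains unproven by this proposal.
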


\begin{conj}
    For any fixed $n \geq 1$, the sequence $\{W_{n,k}(t)\}_{1\leq k \leq n}$ is Sturm-unimodal.
\end{conj}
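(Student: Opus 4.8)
The plan is to reduce the Sturm-unimodality of $\{W_{n,k}(t)\}_{1\le k\le n}$ to a single ascending chain of interlacings and then to attack that chain with the standard machinery for interlacing-preserving recurrences. First I would record the shape of the degrees: by Theorem~\ref{t-explicit} we have $\deg W_{n,k}(t)=\min\{k,\,n-k\}$, so as $k$ runs from $1$ to $n$ the degrees read $1,2,\dots$ up to a peak near $k=\lfloor n/2\rfloor$ and then back down, exactly the up-then-down pattern demanded by Sturm-unimodality. Since interlacing is defined purely through the relative order of roots, Proposition~\ref{prop:reflection}---which asserts that $W_{n,k}$ and $W_{n,n-k}$ have identical roots---lets me transfer every interlacing relation from the left half of the range to the right half. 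Concretely, the descending relation $W_{n,k+1}\rightarrow W_{n,k}$ for $k\ge\lfloor n/2\rfloor$ is, after the substitution $k\mapsto n-k$, the ascending relation $W_{n,n-k-1}\rightarrow W_{n,n-k}$ on the left half. Thus the whole conjecture collapses to proving the ascending chain
\[
W_{n,1}\;\rightarrow\;W_{n,2}\;\rightarrow\;\cdots\;\rightarrow\;W_{n,\lfloor n/2\rfloor},
\]
together with locating the turning index $j$.

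The turning point I would handle using Proposition~\ref{prop:reflection} once more. For odd $n=2N+1$ the two central polynomials $W_{n,N}$ and $W_{n,N+1}$ both have degree $N$ and, because $n-N=N+1$, the \emph{same} roots; hence each interlaces the other in the equal-degree sense, so I may take $j=N$ and glue the ascending chain to its reflected descending copy at this pair. For even $n=2N$ there is a unique top-degree polynomial $W_{n,N}$, and I take $j=N$ directly. In both cases the reflection symmetry renders the descending half a mirror image of the ascending half, so no new interlacings must be proved on the right.

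The real content---and the main obstacle---is the ascending chain $W_{n,k}\rightarrow W_{n,k+1}$ for $1\le k\le\lfloor n/2\rfloor-1$, where the degree increases by exactly one at each step. I would strip the positive scalar $\tfrac1k\binom{n}{k-1}$ (which does not move any root) and work with $P_{n,k}(t)=\sum_m\binom{n-k-1}{m-1}\binom{k}{m}t^m$, already shown to be real-rooted in the proof of Theorem~\ref{thm:real-rooted} via a Hadamard product. My first route would be to seek a three-term recurrence relating $P_{n,k}$, $P_{n,k+1}$, and an auxiliary polynomial of the appropriate degree, with coefficient polynomials of controlled sign, and then to invoke one of the classical interlacing-preservation lemmas (of the Br\"and\'en--Fisk--Liu--Wang type), which force $P_{n,k}\rightarrow P_{n,k+1}$ once the recurrence and a single base case are checked. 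An alternative route is to recognize $P_{n,k}$, after a M\"obius substitution such as $t\mapsto (1+x)/(1-x)$, as a Jacobi polynomial $P^{(\alpha_k,\beta_k)}_{d_k}(x)$ and to invoke or adapt known interlacing theorems for Jacobi zeros under contiguous shifts of the degree and the parameters.

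The difficulty is precisely that as $k$ advances the degree \emph{and} both binomial parameters $n-k-1$ and $k$ change at once, so neither the classical zero-separation theorem for a single orthogonal family nor a naive one-parameter contiguity argument applies off the shelf; one must produce exactly the right mixed contiguous relation and verify the sign hypotheses of the preservation lemma. Getting the \emph{direction} of the interlacing correct---the lower-degree polynomial interlacing the higher-degree one, and not the reverse---is the delicate point. It is this step, rather than the reduction or the turning-point bookkeeping, that remains to be completed, which is why the statement is recorded here only as a conjecture.
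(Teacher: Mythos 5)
The statement you are addressing is recorded in the paper only as a conjecture; the authors offer no proof, so there is nothing on their side to compare against, and your proposal---as you yourself acknowledge in the closing paragraph---does not close the gap either. What your proposal does contain is a correct and useful reduction. Since the proof of Proposition~\ref{prop:reflection} actually shows $w_{n,k,m}=\frac{k(k+1)}{(n-k)(n-k+1)}\,w_{n,n-k,m}$, the polynomial $W_{n,k}(t)$ is a positive scalar multiple of $W_{n,n-k}(t)$, so the two have the same roots with multiplicity and the same degree $\min\{k,n-k\}$. Consequently every descending interlacing $W_{n,k+1}\rightarrow W_{n,k}$ with $k\geq\lfloor n/2\rfloor$ is equivalent to an ascending one on the left half; and for odd $n=2N+1$ the central pair $W_{n,N}$, $W_{n,N+1}$ mutually interlace in the equal-degree sense because their root sequences coincide. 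Your degree bookkeeping ($\deg W_{n,k}=\min\{k,n-k\}$, by Theorem~\ref{t-explicit}) is also correct, so Sturm-unimodality with turning index $j=\lfloor n/2\rfloor$ would indeed follow from the single ascending chain $W_{n,1}\rightarrow W_{n,2}\rightarrow\cdots\rightarrow W_{n,\lfloor n/2\rfloor}$.

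The genuine gap is that this chain \emph{is} the conjecture: the reflection argument eliminates only the redundant half, and no link of the chain is actually established. After stripping the positive scalars, what must be shown is that
\begin{equation*}
P_{n,k}(t)=\sum_{m\geq 1}\binom{n-k-1}{m-1}\binom{k}{m}t^{m}
\quad\text{interlaces}\quad
P_{n,k+1}(t)=\sum_{m\geq 1}\binom{n-k-2}{m-1}\binom{k+1}{m}t^{m},
\end{equation*}
where the two binomial parameters move simultaneously and in opposite directions as $k$ grows. You name the two standard tools (an interlacing-preserving three-term recurrence, or Jacobi-polynomial contiguity after a M\"obius substitution), but you exhibit neither the recurrence nor the contiguous relation, verify no sign hypotheses, and check no base case; you also correctly sense the danger in the direction of interlacing, which is precisely the kind of detail such arguments live or die by. Note too that the real-rootedness proof of Theorem~\ref{thm:real-rooted} cannot be leveraged directly: it writes each $P_{n,k}$ as a Hadamard product via Malo's theorem, but both factors $f(t)=t(t-1)^{n-k-1}$ and $g(t)=(t-1)^{k}$ change with $k$, and Hadamard products do not in general respect interlacing relations between pairs. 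So what you have is a sound reduction plus a research plan, not a proof; the conjecture remains open.
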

\noindent We note that Conjectures 5.4 and 5.5 were originally posed in previous versions of this paper; since their appearance, these conjectures have been resolved in \cite{WangZhang}.

Our result expressing the numbers $w_{n,k,m}$ in terms of generalized Narayana numbers has a natural polynomial analogue. Let $\operatorname{Nar}^{(r)}_k(t)$ denote the $k$th \textit{$r$-generalized Narayana polynomial} defined by
$$\operatorname{Nar}^{(r)}_k(t)=\sum_{m=0}^{k-r}N^{(r)}_{k,m}t^m=\frac{r+1}{k+1}\sum_{m=0}^{k-r}\binom{k+1}{m}\binom{k-r-1}{m-1}t^m.$$
Setting $r=0$ recovers the usual \emph{Narayana polynomials} $\operatorname{Nar}_{k}(t) = \sum_{m = 0}^{k} N_{k,m} t^m$. 
From Theorem ~\ref{thm:simplify} and straightforward computations, we have the following expressions for $W_{n,k}(t)$.

\begin{prop} \label{prop:polysimplify}
Let $k\geq 1$.\vspace{5bp}
\begin{enumerate}
    \item[(a)] For all $j\geq1$, we have $\displaystyle{W_{2k+j,k}(t)= \frac{1}{j}\binom{2k+j}{k-1}\operatorname{Nar}^{(j-1)}_{k+j-1}(t)}$.\vspace{5bp}
    \item[(b)] We have $\displaystyle{W_{2k,k}(t) = C_k \sum_{m=1}^{k}\binom{k-1}{m-1}\binom{k}{m}t^m}$ where $C_{k}$ denotes the $k$th Catalan number.\vspace{5bp}
    \item[(c)] For all $1\leq j\leq k$, we have $\displaystyle{W_{2k-j,k}(t)= \frac{1}{j}\binom{2k-j}{k-1}\operatorname{Nar}^{(j-1)}_{k-1}(t)}$.
\end{enumerate}
\end{prop}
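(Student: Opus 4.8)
The plan is to obtain all three formulas by substituting the coefficient identities already proved into the definition $W_{n,k}(t)=\sum_{m=0}^{k}w_{n,k,m}\,t^{m}$ and pulling out the constant prefactors, recognizing what remains as a generalized Narayana polynomial. For part (a), I would apply the first formula of Theorem~\ref{thm:simplify} termwise to write
$$W_{2k+j,k}(t)=\sum_{m=0}^{k}w_{2k+j,k,m}\,t^{m}=\frac{1}{j}\binom{2k+j}{k-1}\sum_{m=0}^{k}N^{(j-1)}_{k+j-1,m}\,t^{m}.$$
It then remains only to recognize the inner sum as $\operatorname{Nar}^{(j-1)}_{k+j-1}(t)$, which is a matter of checking that the summation ranges agree: the defining sum for $\operatorname{Nar}^{(j-1)}_{k+j-1}(t)$ runs over $0\le m\le (k+j-1)-(j-1)=k$, exactly the range appearing above. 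Part (c) is entirely analogous using the second formula of Theorem~\ref{thm:simplify}, which produces the prefactor $\frac{1}{j}\binom{2k-j}{k-1}$ times $\sum_{m}N^{(j-1)}_{k-1,m}\,t^{m}$; here one notes that $w_{2k-j,k,m}=0$ for $m>k-j$ by the inequality $k+m\le n$ from Theorem~\ref{t-explicit}, so the sum truncates precisely to the range $0\le m\le (k-1)-(j-1)=k-j$ defining $\operatorname{Nar}^{(j-1)}_{k-1}(t)$.

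Part (b) is the case $n=2k$, that is $j=0$, which lies outside the hypotheses of Theorem~\ref{thm:simplify}, so here I would instead invoke the explicit formula of Theorem~\ref{t-explicit} directly (equivalently, one may use \eqref{e-wcomp} together with \eqref{e-compbin}). Setting $n=2k$ gives $w_{2k,k,m}=\frac{1}{k}\binom{2k}{k-1}\binom{k-1}{m-1}\binom{k}{m}$ for $m>0$, and the one computation that needs to be carried out is the simplification of the prefactor to a Catalan number, namely
$$\frac{1}{k}\binom{2k}{k-1}=\frac{(2k)!}{k\,(k-1)!\,(k+1)!}=\frac{(2k)!}{k!\,(k+1)!}=\frac{1}{k+1}\binom{2k}{k}=C_{k}.$$
Summing $w_{2k,k,m}\,t^{m}$ over $m$ then yields the stated expression $W_{2k,k}(t)=C_{k}\sum_{m=1}^{k}\binom{k-1}{m-1}\binom{k}{m}t^{m}$.

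I do not expect any substantive obstacle: the mathematical content of this proposition resides entirely in Theorem~\ref{thm:simplify} (for parts (a) and (c)) and Theorem~\ref{t-explicit} (for part (b)), and the present statement is simply the generating-polynomial repackaging of those coefficient identities. The only points requiring genuine care are bookkeeping ones, namely confirming that the summation index ranges coincide with those in the definition of $\operatorname{Nar}^{(r)}_{k}(t)$---which relies on the vanishing of $w_{n,k,m}$ outside its admissible range---and performing the binomial simplification to $C_{k}$ in part (b). If anything is the ``hard'' part, it is merely making the range-matching explicit so that the truncation of the Narayana polynomial is justified rather than assumed.
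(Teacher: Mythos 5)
Your proposal is correct and takes essentially the same route as the paper, which derives parts (a) and (c) directly from Theorem~\ref{thm:simplify} and part (b) from the explicit formula, leaving the details as ``straightforward computations.'' You have filled in exactly those details correctly: the range-matching with the definition of $\operatorname{Nar}^{(r)}_{k}(t)$ (including the truncation in part (c) via the vanishing of $w_{n,k,m}$ for $k+m>n$) and the simplification $\frac{1}{k}\binom{2k}{k-1}=C_{k}$ in part (b).
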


The real-rootedness of the $r$-generalized Narayana polynomials was recently shown in \cite{chen-yang-zhao} using a different approach; Proposition \ref{prop:polysimplify} shows that the real-rootedness of the $W_{n,k}(t)$ implies the real-rootedness of  the $\operatorname{Nar}^{(r)}_{k}(t)$, thus giving an alternative proof of this result.


\subsection{Symmetry, \texorpdfstring{$\gamma$}{gamma}-positivity, and a symmetric decomposition} It is fitting that we end this paper by returning full circle to the topic of symmetry. A polynomial $a_0 + a_1t+\cdots+a_{d}t^d$ of degree $d$ is said to be \emph{symmetric} if $a_i = a_{d-i}$ for all $0\leq i \leq d$. First, we note that our symmetries for the numbers $w_{2k+1,k,m}$ and $w_{2k-1,k,m}$ immediately imply the following:

\begin{prop}\label{prop:symmetric}
The polynomials $W_{2k+1,k}(t)$ and $W_{2k-1,k}(t)$ are symmetric.
\end{prop}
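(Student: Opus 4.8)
The plan is to recognize that this proposition is a direct translation, from the level of individual coefficients to the level of polynomials, of the two symmetries already established. Recall that a polynomial $p(t)=\sum_m a_m t^m$ whose nonzero coefficients occupy degrees $a$ through $b$ is \emph{symmetric} precisely when $a_{a+i}=a_{b-i}$ for all $i$, equivalently when $t^{a+b}p(1/t)=p(t)$. So the entire task reduces to pinning down the support of each polynomial and checking that the appropriate coefficient symmetry realizes exactly this palindromic condition.

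First I would treat $W_{2k+1,k}(t)=\sum_{m=0}^{k} w_{2k+1,k,m}\,t^m$. By Theorem \ref{t-explicit}, the constant term $w_{2k+1,k,0}$ vanishes for $k\geq 1$, since $m=0$ forces $n=k$, which fails when $n=2k+1$; hence the support runs from $m=1$ to $m=k$. Theorem \ref{th:main} gives $w_{2k+1,k,m}=w_{2k+1,k,k+1-m}$ for $1\leq m\leq k$, and since the involution $m\mapsto k+1-m$ interchanges the extreme degrees $1$ and $k$, this is exactly the palindromic condition $a_m=a_{(k+1)-m}$; equivalently $t^{k+1}W_{2k+1,k}(1/t)=W_{2k+1,k}(t)$, so $W_{2k+1,k}(t)$ is symmetric about $(k+1)/2$.

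Next I would handle $W_{2k-1,k}(t)$ in the same manner. Here Theorem \ref{t-explicit} shows the support runs from $m=1$ to $m=k-1$, since the condition $k+m\leq 2k-1$ forces $m\leq k-1$ and the constant term again vanishes for $k\geq 2$ (the degenerate case $k=1$ yields a constant, which is trivially symmetric). The symmetry $w_{2k-1,k,m}=w_{2k-1,k,k-m}$ noted in the text follows from Theorem \ref{th:relationNandW2} together with the Narayana symmetry $N_{k-1,m}=N_{k-1,k-m}$; since $m\mapsto k-m$ pairs the extreme degrees $1$ and $k-1$, this gives $t^{k}W_{2k-1,k}(1/t)=W_{2k-1,k}(t)$, so $W_{2k-1,k}(t)$ is symmetric about $k/2$.

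The only real care needed is the bookkeeping of the support, and this is where a careless argument could slip: the symmetry index differs between the two families ($k+1-m$ versus $k-m$), so one must confirm that the vanishing of the boundary coefficients places the center of symmetry at $(k+1)/2$ and $k/2$ respectively, ensuring the stated coefficient identity matches palindromicity rather than producing an off-by-one mismatch. Once the supports are correctly identified, no computation remains.
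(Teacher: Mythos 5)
Your proof is correct and follows the same route as the paper, which derives the proposition immediately from the coefficient symmetries $w_{2k+1,k,m}=w_{2k+1,k,k+1-m}$ (Theorem \ref{th:main}) and $w_{2k-1,k,m}=w_{2k-1,k,k-m}$ (Theorem \ref{th:relationNandW2} plus Narayana symmetry). Your added bookkeeping—verifying that the supports are exactly $[1,k]$ and $[1,k-1]$ so that the maps $m\mapsto k+1-m$ and $m\mapsto k-m$ realize palindromicity about $(k+1)/2$ and $k/2$—is exactly the detail the paper leaves implicit, and it is handled correctly.
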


A symmetric polynomial of degree $d$ can be written uniquely as a linear combination of the polynomials $\{ t^j(1+t)^{d-2j} \}_{0 \leq j \leq \lfloor d/2 \rfloor}$, referred to as the \emph{gamma basis}. A symmetric polynomial is called \emph{$\gamma$-positive} if its coefficients in the gamma basis are nonnegative. Gamma-positivity has shown up in many combinatorial and geometric contexts; see \cite{Athanasiadis2018} for a thorough survey. It is well known that the coefficients of a $\gamma$-positive polynomial form a unimodal sequence, and that $\gamma$-positivity is connected to real-rootedness in the following manner:

\begin{thm}[\cite{Branden2004}] \label{thm:gammapositive}
    If $f$ is a real-rooted, symmetric polynomial with nonnegative coefficients, then $f$ is $\gamma$-positive.
\end{thm}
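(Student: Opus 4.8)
The plan is to read off the gamma-basis coefficients directly from the factorization of $f$ that real-rootedness and the palindromic symmetry force upon it. First I would pin down the location of the roots of $f$. Write $d = \deg f$ and let $a_d > 0$ be its leading coefficient (positive because the coefficients are nonnegative and $a_d \neq 0$). Since every coefficient is nonnegative, $f(t) > 0$ for all $t > 0$, so $f$ has no positive real root; as $f$ is real-rooted, every root is therefore at most $0$. Symmetry means the coefficient sequence is palindromic, so the constant term equals $a_d \neq 0$; hence $0$ is not a root either, and all roots of $f$ are strictly negative reals.

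Next I would use the symmetry $f(t) = t^{d} f(1/t)$ to pair the roots: if $r$ is a root then so is $1/r$, and a self-paired root satisfies $r = 1/r$ with $r < 0$, forcing $r = -1$. Collecting the roots equal to $-1$ (say $e$ of them, counted with multiplicity) and the remaining reciprocal pairs $\{r_i, 1/r_i\}$ for $i = 1, \dots, p$, where $d = e + 2p$, gives the factorization
\begin{equation*}
f(t) = a_d\,(1+t)^{e} \prod_{i=1}^{p} \bigl(t^2 + s_i t + 1\bigr), \qquad s_i := -\left(r_i + \tfrac{1}{r_i}\right).
\end{equation*}
Because each $r_i < 0$, we have $s_i = |r_i| + |r_i|^{-1} \geq 2$ (using $x + x^{-1} \geq 2$ for $x > 0$), which is precisely the input that makes the argument work.

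Finally I would rewrite each quadratic factor in the gamma basis as $t^2 + s_i t + 1 = (1+t)^2 + \beta_i\, t$ with $\beta_i := s_i - 2 \geq 0$, and expand
\begin{equation*}
f(t) = a_d\,(1+t)^{e} \prod_{i=1}^{p} \bigl[(1+t)^2 + \beta_i\, t\bigr].
\end{equation*}
A term of this expansion is obtained by selecting the summand $\beta_i\, t$ on a subset $S \subseteq \{1, \dots, p\}$ and $(1+t)^2$ on its complement, contributing $a_d \bigl(\prod_{i \in S} \beta_i\bigr)\, t^{|S|} (1+t)^{d - 2|S|}$. Grouping these contributions by $|S| = j$ yields $f(t) = \sum_{j=0}^{p} \gamma_j\, t^{j} (1+t)^{d - 2j}$ with $\gamma_j = a_d \sum_{|S| = j} \prod_{i \in S} \beta_i \geq 0$ and $p \leq \lfloor d/2 \rfloor$, which is exactly $\gamma$-positivity. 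The one genuinely essential step is the root localization in the first paragraph: once all roots are known to be strictly negative and reciprocal-paired, the inequality $s_i \geq 2$ is automatic and the remainder is bookkeeping. I expect the only point needing care is the use of symmetry to exclude $0$ as a root, since without it the factorization could contain bare factors of $t$ that do not fit the gamma basis.
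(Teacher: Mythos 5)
Your proof is correct. There is no internal argument to compare it against: the paper imports this statement from Br\"and\'en \cite{Branden2004} without proof, and your argument---all roots strictly negative, reciprocal pairing of roots forced by palindromicity, each quadratic factor rewritten as $(1+t)^2+\beta_i t$ with $\beta_i=s_i-2\ge 0$, then the subset expansion giving $\gamma_j = a_d\sum_{|S|=j}\prod_{i\in S}\beta_i\ge 0$---is precisely the standard proof of this result in the literature; every step checks out, including the exclusion of $0$ as a root via $a_0=a_d\neq 0$ and the exponent bookkeeping $e+2(p-|S|)=d-2|S|$. One remark worth making: in the paper's actual application the polynomial $W_{2k+1,k}(t)$ has vanishing constant term and is symmetric about the center $(k+1)/2$ rather than $\deg/2$, so it is not palindromic with respect to its degree; this case reduces to yours by factoring out the appropriate power of $t$ (which only shifts the gamma basis), exactly the ``bare factor of $t$'' issue you flagged at the end.
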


The $\gamma$-positivity of the polynomials $W_{2k+1,k}(t)$ and $W_{2k-1,k}(t)$ then follows directly from Theorem ~\ref{thm:real-rooted}, Proposition \ref{prop:symmetric}, and Theorem \ref{thm:gammapositive}. Furthermore, we can get explicit formulas for their gamma coefficients by exploiting their connection to the Narayana polynomials.

\begin{prop}\label{cor:gamma-positivity}
The polynomials $W_{2k+1,k}(t)$ and $W_{2k-1,k}(t)$ are $\gamma$-positive for all $k\geq 1$. More precisely, we have the following gamma expansions:
\vspace{5bp}
\begin{enumerate}
\item[(a)] $\displaystyle{W_{2k+1,k}(t)=\sum_{j=1}^{\lfloor \frac{k+1}{2}\rfloor} \binom{2k+1}{k-1} \frac{(k-1)!}{(k-2j+1)!\, (j-1)!\, j!} \, t^j(1+t)^{k+1-2j}}$ for all $k\geq 1$\textup{;} \vspace{5bp}
\item[(b)] $\displaystyle{W_{2k-1,k}(t)=\sum_{j=1}^{\lfloor \frac{k+1}{2}\rfloor} \binom{2k-1}{k-1} \frac{(k-2)!}{(k-2j)!\, (j-1)!\, j!} \, t^j(1+t)^{k+1-2j}}$ for all $k\geq 2$.
\end{enumerate}
\end{prop}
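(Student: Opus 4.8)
The plan is to reduce everything to the classical $\gamma$-expansion of the ordinary Narayana polynomials. The \emph{symmetry} and the bare fact of $\gamma$-\emph{positivity} are already available from Proposition~\ref{prop:symmetric}, Theorem~\ref{thm:real-rooted}, and Theorem~\ref{thm:gammapositive}, so the only genuinely new content of Proposition~\ref{cor:gamma-positivity} is the explicit gamma coefficients. First I would specialize Proposition~\ref{prop:polysimplify}: taking $j=1$ in part (a) gives $W_{2k+1,k}(t)=\binom{2k+1}{k-1}\operatorname{Nar}^{(0)}_{k}(t)=\binom{2k+1}{k-1}\operatorname{Nar}_{k}(t)$, and taking $j=1$ in part (c) gives $W_{2k-1,k}(t)=\binom{2k-1}{k-1}\operatorname{Nar}_{k-1}(t)$. (These are also immediate from Theorems~\ref{th:relationNandW} and~\ref{th:relationNandW2}.) Hence it suffices to record a single identity, namely the $\gamma$-expansion of the Narayana polynomial $\operatorname{Nar}_n(t)$.

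Next I would establish that
\begin{equation*}
\operatorname{Nar}_n(t)=\sum_{j\geq 1}\frac{1}{j}\binom{n-1}{2j-2}\binom{2j-2}{j-1}\,t^j(1+t)^{n+1-2j},
\end{equation*}
and observe that the gamma coefficient collapses to $\tfrac{1}{j}\binom{n-1}{2j-2}\binom{2j-2}{j-1}=\tfrac{(n-1)!}{(n-2j+1)!\,(j-1)!\,j!}$, using $(j-1)!\,j=j!$. Substituting $n=k$ into the exponent $n+1-2j$ and into this coefficient, then multiplying by $\binom{2k+1}{k-1}$, reproduces part (a) verbatim; substituting $n=k-1$ and multiplying by $\binom{2k-1}{k-1}$ reproduces part (b), where the coefficient becomes $\tfrac{(k-2)!}{(k-2j)!\,(j-1)!\,j!}$ and the exponent becomes $(n+1)-2j=k-2j$. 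The summation range $1\leq j\leq\lfloor (k+1)/2\rfloor$ emerges automatically once one discards the terms in which a factorial in the denominator is applied to a negative integer and hence vanishes.

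For the boxed Narayana identity I see two routes. The cleanest is to cite it as classical: it is recorded, for instance, in the survey~\cite{Athanasiadis2018} already referenced in this section. For a self-contained argument I would exploit the substitution that linearizes the gamma basis: dividing by $(1+t)^{n+1}$ and setting $u=t/(1+t)^2$ turns the target into $\operatorname{Nar}_n(t)(1+t)^{-(n+1)}=\sum_j \gamma_{n,j}\,u^j$, so that the $\gamma_{n,j}$ are precisely the coefficients of $\operatorname{Nar}_n(t)(1+t)^{-(n+1)}$ viewed as a series in $u$. Feeding the algebraic closed form of $\sum_{n\geq 0}\operatorname{Nar}_n(t)x^n$ through this substitution collapses its square-root to the Catalan generating function in the single variable $u$, and extracting the coefficient of $x^n u^j$ yields the ballot-number product $\tfrac{1}{j}\binom{n-1}{2j-2}\binom{2j-2}{j-1}$.

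The main obstacle is exactly this last step: the reduction to Narayana polynomials is immediate, but pinning down the explicit gamma coefficients requires either invoking the classical result or carefully carrying out the $u=t/(1+t)^2$ substitution while tracking the square-root simplification and the index bookkeeping. If the generating-function manipulation proves delicate, I would fall back on verifying the equivalent finite identity $\sum_{j}\tfrac{1}{j}\binom{n-1}{2j-2}\binom{2j-2}{j-1}\binom{n+1-2j}{m-j}=N_{n,m}$ directly, either by a standard hypergeometric (Zeilberger) certificate or by a sign-reversing involution on the underlying lattice paths.
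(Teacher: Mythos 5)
Your proposal is correct and takes essentially the same route as the paper: the paper's entire proof consists of citing the classical gamma expansion of $\operatorname{Nar}_k(t)$ from \cite[Theorem 2.32]{Athanasiadis2018} and applying the $j=1$ case of Proposition~\ref{prop:polysimplify}~(a) and (c), so your reduction is identical and your extra material (the $u=t/(1+t)^2$ substitution, the Zeilberger fallback) is just an optional self-contained derivation of the cited Narayana expansion, with the coefficient written in the equivalent ballot-number form $\tfrac{1}{j}\binom{n-1}{2j-2}\binom{2j-2}{j-1}=\tfrac{(n-1)!}{(n-2j+1)!\,(j-1)!\,j!}$. One remark: your substitution $n=k-1$ correctly yields exponent $k-2j$ in part~(b), which does not literally match the printed $(1+t)^{k+1-2j}$ --- but the printed exponent is a typo in the statement (for $k=2$ one has $W_{3,2}(t)=3t=3\,t(1+t)^{0}$, not $3t(1+t)$), so your derivation gives the corrected form.
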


\begin{proof}
The Narayana polynomials are known to be $\gamma$-positive with gamma expansion
$$\operatorname{Nar}_k (t)=\sum_{j=1}^{\lfloor \frac{k+1}{2} \rfloor}  \frac{(k-1)!}{(k-2j+1)!\, (j-1)!\, j!} \, t^j(1+t)^{k+1-2j}$$
for all $k\geq 1$ \cite[Theorem 2.32]{Athanasiadis2018}, and this implies the desired result by the $j=1$ case of Proposition ~\ref{prop:polysimplify} (a) and (c).
\end{proof}

\begin{rmk} \label{r-gam}
The gamma coefficients of the Narayana polynomials have a nice combinatorial interpretation in terms of lattice paths: $\frac{(k-1)!}{(k-2j+1)!\, (j-1)!\, j!}$ is the number of Motzkin paths of length $k-1$ with $j-1$ up steps \cite{blanco}. 
We can use this fact to give combinatorial interpretations of the gamma coefficients of $W_{2k+1,k}(t)$ and $W_{2k-1,k}(t)$. It would be interesting to find a combinatorial proof for Proposition ~\ref{cor:gamma-positivity} using Motzkin paths, perhaps in the vein of  the ``valley-hopping'' proof for the $\gamma$-positivity of Narayana polynomials \cite{hoppityhophop}.
\end{rmk}

Our question in Remark \ref{r-gam} was recently been addressed by Fu and Yang \cite{fu2025group}, who gave a combinatorial proof for Proposition ~\ref{cor:gamma-positivity} using a group action, reminiscent of valley-hopping, on cyclic compositions.

While the polynomials $W_{n,k}(t)$ are not symmetric in general, it turns out that we can always express $W_{n,k}(t)$ as the sum of two symmetric polynomials.

\begin{thm}\label{conj:symmetric_decomposition}
Let $1\leq k \leq n$. Then there exist symmetric polynomials $W_{n,k}^+(t)$ and $W_{n,k}^-(t)$, both with nonnegative coefficients, such that:
\begin{enumerate}
    \item[(a)] if $n<2k$, then $W_{n,k}(t)= W_{n,k}^+(t) -tW_{n,k}^-(t)$;
    \item[(b)] if $n=2k$, then $W_{n,k}(t)= W_{n,k}^+(t) + tW_{n,k}^-(t)$;
    \item[(c)] and if $n>2k$, then $W_{n,k}(t)= -W_{n,k}^+(t)+tW_{n,k}^-(t)$.
\end{enumerate}
\end{thm}

\begin{proof}
Since $n$ and $k$ are fixed, let us simplify notation by writing $w_m$ in place of $w_{n,k,m}$, so that $W_{n,k}(t)=\sum_{m=0}^k w_m t^m$. Let
\begin{equation}\label{w+}
    w^+_{i+1}=\Big( \sum_{j=0}^{i+1} w_j \Big) - \Big( \sum_{j=0}^{i} w_{k-j} \Big)
    \quad \text{and} \quad
    w^-_{i}= - \Big( \sum_{j=0}^{i} w_j \Big) + \Big( \sum_{j=0}^{i} w_{k-j} \Big)
\end{equation}
for all $0 \leq i \leq k$; also take $w^+_0=1$ when $k=n$ and $w^+_0=0$ otherwise. Observe that
\begin{align}
w^+_i+ w^-_{i-1} &= \Big( \sum_{j=0}^{i} w_j \Big) - \Big( \sum_{j=0}^{i-1} w_{k-j} \Big) - \Big( \sum_{j=0}^{i-1} w_j \Big) + \Big( \sum_{j=0}^{i-1} w_{k-j} \Big) = w_i, \label{eq:sum} \\
w^+_i-w^+_{k-i} &= \Big( \sum_{j=0}^{i} w_j \Big) - \Big( \sum_{j=0}^{i-1} w_{k-j} \Big) - \Big( \sum_{j=0}^{k-i} w_j \Big) + \Big( \sum_{j=0}^{k-i-1} w_{k-j} \Big) = 0, \quad \text{and} \label{eq:sym1} \\
w^-_{i-1}-w^-_{k-i} &= - \Big( \sum_{j=0}^{i-1} w_j \Big) + \Big( \sum_{j=0}^{i-1} w_{k-j} \Big) + \Big( \sum_{j=0}^{k-i} w_j \Big) - \Big( \sum_{j=0}^{k-i} w_{k-j} \Big) = 0. \label{eq:sym2}
 \end{align}
A standard induction argument utilizing the explicit formula in Theorem ~\ref{t-explicit} yields the following:
\begin{itemize}
    \item the $w_i^+$ are positive when $n\leq 2k$,
    \item the $w_i^+$ are negative when $n>2k$,
    \item the $w_i^-$ are positive when $n\geq 2k$, and 
    \item the $w_i^-$ are negative when $n<2k$.
\end{itemize}
Define the polynomials $W_{n,k}^{+}(t)$ and $W_{n,k}^{-}(t)$ by
\begin{align*}
W_{n,k}^{+}(t) = \sum_{i=0}^{k}|w_i^{+}|\, t^{i}\quad\text{and}\quad W_{n,k}^{-}(t)=\sum_{i=0}^{k}|w_i^{-}|\, t^{i},
\end{align*}
respectively. These polynomials are symmetric by (\ref{eq:sym1}) and (\ref{eq:sym2}), and the decompositions given in (a)--(c) hold by construction in light of (\ref{eq:sum}).
\end{proof}

We end with a couple of conjectures concerning the polynomials $W_{n,k}^{+}(t)$ and $W_{n,k}^{-}(t)$ arising from our symmetric decomposition. Our first conjecture, concerning real-rootedness, has been numerically verified for all $n\leq100$. 

\begin{conj}
The following completely characterizes when $W_{n,k}^{+}(t)$ and
$W_{n,k}^{-}(t)$ are both real-rooted\textup{:}
\begin{enumerate}
\item [(a)] If $n=1$ or $n=2$, then $W_{n,k}^{+}(t)$ and $W_{n,k}^{-}(t)$ are both real-rooted if and only if $k=1$.
\item [(b)] If $n>1$ and $n \equiv 1 \textup{ (mod 4)}$, then $W_{n,k}^{+}(t)$ and $W_{n,k}^{-}(t)$ are both real-rooted if and only if $k=1,2,\left\lfloor n/2\right\rfloor -1,\left\lfloor n/2\right\rfloor ,\text{or }\left\lfloor n/2\right\rfloor +1$.
\item [(c)] If $n \equiv 3 \textup{ (mod 4)}$, then $W_{n,k}^{+}(t)$ and $W_{n,k}^{-}(t)$ are both real-rooted if and only if $k=1,2,\left\lfloor n/2\right\rfloor -1,\left\lfloor n/2\right\rfloor ,\left\lfloor n/2\right\rfloor +1,\text{or }\left\lfloor n/2\right\rfloor +2$.
\item [(d)] If $n$ is even and not equal to $2$, $10$, $12$, or $16$, then $W_{n,k}^{+}(t)$ and $W_{n,k}^{-}(t)$ are both real-rooted if and only if $k=1,2,n/2-1,n/2,\text{or }n/2+1$.
\item [(e)] If $n=10$, $12$, or $16$, then $W_{n,k}^{+}(t)$ and $W_{n,k}^{-}(t)$ are both real-rooted if and only if $k=1,2,n/2-2,n/2-1,n/2,\text{or }n/2+1$.
\end{enumerate}
\end{conj}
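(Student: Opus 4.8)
The plan is to first convert the conjecture into a statement about the roots of two explicit polynomial families, and then to split the argument into a tractable positive direction and a much harder negative direction. Using the partial-sum formulas in \eqref{w+} together with the closed form of Theorem \ref{t-explicit}, I would begin by computing $W_{n,k}^{+}(t)$ and $W_{n,k}^{-}(t)$ explicitly. Each coefficient $w_i^{\pm}$ is a partial sum of the $w_{n,k,m}$, and since $w_{n,k,m}=\frac1k\binom{n}{k-1}\binom{n-k-1}{m-1}\binom{k}{m}$ is a single product of binomial coefficients, I expect these partial sums to collapse into hypergeometric closed forms via Gosper--Zeilberger summation. The hope is that $W_{n,k}^{+}$ and $W_{n,k}^{-}$ can be expressed through (generalized) Narayana polynomials or the associated Jacobi polynomials, exactly as $W_{n,k}$ itself is in Proposition \ref{prop:polysimplify}; this would let me import established root-location results instead of analyzing each family from scratch. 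I would also record which $k$ need separate attention: Proposition \ref{prop:reflection} relates $W_{n,k}$ and $W_{n,n-k}$, but the asymmetry of the conjecture's list of admissible $k$ (which clusters near $1,2$ and near $n/2$, but not near $n-1,n-2$) strongly suggests that the decomposition pieces do \emph{not} transfer under $k \mapsto n-k$, so each regime must be handled on its own.

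For the positive direction I would treat the regimes separately. The values $k=1,2$ produce polynomials of degree at most $2$, whose real-rootedness is immediate from the closed forms. The values of $k$ near $n/2$ are precisely those for which $W_{n,k}(t)$ is itself symmetric or nearly so: when $n=2k\pm 1$, Proposition \ref{prop:symmetric} and Theorem \ref{thm:real-rooted} already furnish a symmetric, real-rooted polynomial, and for such a polynomial the symmetric decomposition into pieces symmetric about $d/2$ and $(d-1)/2$ is known to yield real-rooted, mutually interlacing summands. I would make this precise with the interlacing and Sturm-sequence machinery—the same Hermite--Biehler circle of ideas underlying Malo's Theorem \ref{thm:Malo}—to propagate real-rootedness from $W_{n,k}$ down to each of $W_{n,k}^{\pm}$, and then cover $k=\lfloor n/2\rfloor-1,\lfloor n/2\rfloor,\lfloor n/2\rfloor+1$ (and, in the sporadic cases, $n/2-2$) by a short perturbation argument off the exactly-symmetric case.

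The negative direction—showing that for \emph{every} $k$ outside the listed set at least one of $W_{n,k}^{+},W_{n,k}^{-}$ has a non-real root—is where the genuine difficulty lies. I would attempt to locate a conjugate pair of complex roots through a discriminant sign analysis, or equivalently by a Sturm-sequence count, applied to the explicit families from the first step, aiming to show that for each fixed residue of $n$ modulo $4$ the leading asymptotics force complex roots once $k$ is bounded away from $1,2$ and from $n/2$. The main obstacle, and the reason the statement is still a conjecture, is the presence of the sporadic exceptions $n=10,12,16$ in parts (d)--(e): a finite list of exceptional $n$ essentially precludes a single uniform structural proof and instead forces a hybrid argument, in which one proves the characterization for all $n$ above an explicit threshold by asymptotic root-location and then verifies every $n$ below the threshold—together with $n=10,12,16$—by exact symbolic computation. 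Pinning down that threshold, and explaining precisely why $10$, $12$, and $16$ admit the extra real-rooted value $k=n/2-2$ while no other even $n$ does, is the crux; absent such an explanation, I would expect to be able only to confirm the conjecture up to a finite computer check (as the authors report for $n\le 100$), rather than to prove it in full.
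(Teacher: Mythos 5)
This statement is a \emph{conjecture} in the paper: the authors give no proof of it, only the remark that it ``has been numerically verified for all $n\leq 100$.'' So there is no paper proof to compare your attempt against, and your proposal, to its credit, is candid that it does not close the problem either --- it ends by conceding that you would ``expect to be able only to confirm the conjecture up to a finite computer check.'' That concession is the verdict: what you have written is a research plan, not a proof, and the plan's hard part (the ``only if'' direction, plus an explanation of the sporadic exceptions $n=10,12,16$) is left entirely open, exactly where the authors left it.

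Beyond that, one concrete step in your positive direction would fail as stated. You propose to ``propagate real-rootedness from $W_{n,k}$ down to each of $W_{n,k}^{\pm}$'' via interlacing/Hermite--Biehler machinery for symmetric decompositions. No such propagation can exist in general: Theorem \ref{thm:real-rooted} makes $W_{n,k}(t)$ real-rooted for \emph{all} $n,k$, so if real-rootedness of $W_{n,k}$ forced real-rootedness of the pieces, the pieces would be real-rooted for every $k$, contradicting the ``only if'' half of the very conjecture you are trying to prove. The obstruction is visible in Theorem \ref{conj:symmetric_decomposition} itself: the decomposition is not the standard one $f=a+tb$ with nonnegative summands, but carries case-dependent signs ($W=W^{+}-tW^{-}$ or $W=-W^{+}+tW^{-}$), which places it outside the hypotheses of the standard interlacing results for symmetric decompositions. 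The genuinely easy cases ($n=2k\pm1$) are easy for a different, degenerate reason --- there one of the two pieces essentially vanishes or reduces to a shifted Narayana polynomial --- and your ``short perturbation argument'' for $k=\lfloor n/2\rfloor\pm 1$ is not a mechanism at all, since real-rootedness is not stable under the coefficient perturbations that occur when $k$ moves off the symmetric value. In short: your diagnosis of where the difficulty lies is accurate, but neither you nor the paper has a proof, and the one mechanism you do sketch in detail cannot work in the form given.
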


To establish real-rootedness, it would be helpful to have formulas for the polynomials $W_{n,k}^{+}(t)$ or $W_{n,k}^{-}(t)$. Our next conjecture gives formulas for $W_{2k,k}^{+}(t)$, $W_{2k,k}^{-}(t)$, $W_{2k,k+1}^{-}(t)$, and $W_{2k,k-1}^{+}(t)$. Unfortunately, we do not have conjectured formulas for any of the other polynomials.



\begin{conj} \label{conj-W2k} 
Let $k\geq 1$.
\begin{enumerate}
\item[(a)] We have
\[
W_{2k,k}^{+}(t)=(k-1)C_{k}\Nar_{k-1}(t)\quad\text{and}\quad W_{2k,k}^{-}(t)=C_{k}\sum_{i=0}^{k-1}\binom{k-1}{i}^{2}t^{i}.
\]
\item[(b)] If $k\geq 2$, we have 
\[
W_{2k,k+1}^{-}(t)=\binom{2k}{k}\Nar_{k-1}(t) \quad \text{and} \quad
W_{2k,k-1}^{+}(t)=-\frac{t}{2}\binom{2k}{k-2}\overline{\Nar}_{k-2}^{(1)}(t)
\]
where $\overline{\Nar}_{k}^{(j)}(t)=\sum_{i=0}^{k-j}\frac{j+1}{k+1}\binom{k+1}{i}\binom{k+1}{i+j+1}t^{i}$.
\end{enumerate}
\end{conj}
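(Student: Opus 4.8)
The plan is to verify all four formulas directly at the level of coefficients, using the recursive description of the symmetric decomposition obtained in the proof of Theorem~\ref{conj:symmetric_decomposition}. For each polynomial I fix the second index $\kappa \in \{k-1,k,k+1\}$, abbreviate $w_m = w_{2k,\kappa,m}$, and work from the two telescoping relations
\[
w_i^- - w_{i-1}^- = w_{\kappa-i} - w_i \qquad\text{and}\qquad w_i^+ - w_{i-1}^+ = w_i - w_{\kappa-i+1},
\]
both immediate from~\eqref{w+}, together with the base values $w_0^- = w_\kappa$ and $w_0^+ = 0$ (recall $w_0 = 0$ since $\kappa < 2k$). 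Before starting I would read off $\widetilde m = \deg W_{2k,\kappa}(t)$ from Theorem~\ref{t-explicit} to fix which case of Theorem~\ref{conj:symmetric_decomposition} is in force and hence the signs of the $w_i^\pm$: one finds $\widetilde m = k$ for $\kappa = k$ (case~(b), both $w_i^\pm \ge 0$), $\widetilde m = k-1$ for $\kappa = k+1$ (case~(a), $w_i^- \le 0$), and $\widetilde m = k-1$ for $\kappa = k-1$ (case~(c), $w_i^+ \le 0$). The method is then guess-and-verify: take the conjectured coefficient sequence, check it at $i=0$, and reduce the telescoping step to a binomial identity.

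For part~(a), Proposition~\ref{prop:polysimplify}(b) gives $w_m = C_k\binom{k-1}{m-1}\binom{k}{m}$. I would first establish $w_i^- = C_k\binom{k-1}{i}^2$ by induction: the base case $w_0^- = w_k = C_k$ is immediate, and since $w_{k-i} = C_k\binom{k-1}{i}\binom{k}{i}$ and $w_i = C_k\binom{k-1}{i-1}\binom{k}{i}$, the inductive step
\[
C_k\binom{k-1}{i}^2 - C_k\binom{k-1}{i-1}^2 = w_{k-i} - w_i
\]
follows by factoring the left side as a difference of squares and applying Pascal's rule $\binom{k-1}{i}+\binom{k-1}{i-1} = \binom{k}{i}$. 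Substituting into $w_i^+ = w_i - w_{i-1}^-$ and simplifying once more by Pascal's rule yields $w_i^+ = C_k\binom{k-1}{i}\binom{k-1}{i-1}$, which is precisely the coefficient of $t^i$ in $(k-1)C_k\Nar_{k-1}(t)$ after expanding $N_{k-1,i} = \tfrac{1}{k-1}\binom{k-1}{i}\binom{k-1}{i-1}$.

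Part~(b) runs along the same lines, now starting from $w_{2k,k+1,m} = \tfrac{1}{k+1}\binom{2k}{k}\binom{k-2}{m-1}\binom{k+1}{m}$ and $w_{2k,k-1,m} = \tfrac{1}{k-1}\binom{2k}{k-2}\binom{k}{m-1}\binom{k-1}{m}$ from Theorem~\ref{t-explicit}. For $W_{2k,k+1}^-$ I would induct on the first telescoping relation with target $\binom{2k}{k}N_{k-1,i}$; after rewriting $\binom{k-2}{k-i} = \binom{k-2}{i-2}$ the step becomes a binomial identity that reduces via Pascal's rule. For $W_{2k,k-1}^+$ I would instead use the second telescoping relation directly, aiming to match the coefficient of $t^i$ in $\tfrac{t}{2}\binom{2k}{k-2}\overline{\Nar}^{(1)}_{k-2}(t)$, namely $\tfrac{1}{k-1}\binom{2k}{k-2}\binom{k-1}{i-1}\binom{k-1}{i+1}$. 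Throughout I would use~\eqref{eq:sym1} and~\eqref{eq:sym2} as a consistency check that the candidate polynomials are symmetric of the correct degree, and I would cross-check small cases (e.g.\ $k=3$) against Table~\ref{Tab_w}.

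The main obstacle will be the two identities in part~(b): because the generalized Narayana numbers $N^{(j)}_{n,m}$ carry the extra parameter $j$, the telescoping steps do not collapse as cleanly as the difference of squares in part~(a), and matching the closed forms will likely require a carefully chosen summation identity (Vandermonde or a contiguous relation) rather than a single use of Pascal's rule. A second, more delicate point is the sign bookkeeping in case~(c): the small-case computation for $k=3$ gives $W_{6,2}^+(t) = 3t$ with nonnegative coefficients, so the minus sign written in front of $\tfrac{t}{2}\binom{2k}{k-2}\overline{\Nar}^{(1)}_{k-2}(t)$ must be reconciled with the convention that $W_{2k,k-1}^+(t)$ has nonnegative coefficients; part of the work is to settle the intended sign so that $W_{2k,k-1}(t) = -W_{2k,k-1}^+(t) + tW_{2k,k-1}^-(t)$ holds exactly.
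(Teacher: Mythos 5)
You should first be aware that the paper does not prove this statement at all: it is explicitly left as an open conjecture (the surrounding text even laments the lack of conjectured formulas for the remaining polynomials), so there is no proof of the paper's to compare yours against. Judged on its own merits, your strategy is sound and nearly complete. Your case bookkeeping is correct ($\widetilde m=k$, case (b) for $\kappa=k$; $\widetilde m=k-1$, case (a) for $\kappa=k+1$; $\widetilde m=k-1$, case (c) for $\kappa=k-1$), the telescoping relations and base values are the right consequences of \eqref{w+}, and your part (a) argument (difference of squares plus Pascal for $w_i^-$, then one more Pascal for $w_i^+$) is a complete proof of $w_i^-=C_k\binom{k-1}{i}^2$ and $w_i^+=C_k\binom{k-1}{i-1}\binom{k-1}{i}$, matching the conjectured polynomials; both targets vanish at $i=k$, so the degrees come out right as well.

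The two identities you flag as the main obstacle in part (b) do close, by elementary means, so the remaining gap is one of execution only. For $W^-_{2k,k+1}$ the inductive step amounts to
\[
\frac{1}{k-1}\binom{k-1}{i-1}\left[\binom{k-1}{i-2}-\binom{k-1}{i}\right]=\frac{1}{k+1}\binom{k+1}{i}\left[\binom{k-2}{i-2}-\binom{k-2}{i-1}\right],
\]
and putting each bracket over a common denominator shows that both sides equal $-\dfrac{k!\,(k-2)!\,(k+1-2i)}{i!\,(i-1)!\,(k-i)!\,(k+1-i)!}$; no Vandermonde-type summation is needed. For $W^+_{2k,k-1}$ the step reduces to
\[
\binom{k-1}{i-2}\binom{k-1}{i}-\binom{k-1}{i-1}\binom{k-1}{i+1}=\binom{k}{i-1}\binom{k-1}{i}-\binom{k}{i+1}\binom{k-1}{i-1},
\]
which follows by expanding $\binom{k}{i-1}$ and $\binom{k}{i+1}$ via Pascal's rule and cancelling the common term $\binom{k-1}{i-1}\binom{k-1}{i}$. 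Finally, your sign observation is correct and should be promoted from a worry to a finding: with the paper's definition $W^+_{n,k}(t)=\sum_i\lvert w_i^+\rvert t^i$, the right-hand side $-\frac{t}{2}\binom{2k}{k-2}\overline{\Nar}^{(1)}_{k-2}(t)$ has negative coefficients and cannot literally equal $W^+_{2k,k-1}(t)$ (your check $W^+_{6,2}(t)=3t$ versus $-3t$ already shows this); the statement is true once the minus sign is dropped, or equivalently if the right-hand side is read as the signed polynomial $\sum_i w_i^+t^i=-W^+_{2k,k-1}(t)$ appearing in case (c) of Theorem \ref{conj:symmetric_decomposition}. With these two computations inserted and the sign convention settled, your outline is a complete proof of the conjecture, and in particular settles something the authors left open.
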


The $\overline{\Nar}_{k}^{(j)}(t)$ defined in Conjecture \ref{conj-W2k} form a family of generalized Narayana polynomials \cite{yang} different from Callan's.

\section*{Acknowledgments}
The authors thank the American Mathematical Society and the organizers of the MRC on Trees in Many Contexts, where this work was initiated. We also thank David Callan, Ira Gessel, and Martin Rubey for helpful discussions, as well as several anonymous referees for carefully reading previous versions of this manuscript and kindly offering suggestions which led to significant improvements in the quality and organization of our paper. This material is based upon work supported by the National Science Foundation under Grant Number DMS-1641020.


\bibliographystyle{plain}
\bibliography{main}


\section{Appendix}
\label{sec:app}

\subsection{The generating function approach}\label{explicit_solution}

Here we present an algebraic proof of Theorem~\ref{t-explicit}, which gives an explicit formula for the numbers $w_{n,k,m}$ and in turn implies the symmetry in Theorem~\ref{th:main}. 
To do so, we need a functional equation satisfied by the generating function $W$ defined by
\[W=W(x,y,z)=\sum_{n,k,m}w_{n,k,m}\,x^ny^kz^m.\]
In other words, $W$ is the generating function of all Dyck paths, where the exponents of $x$, $y$, and $z$ encode the semilength, the number of $UD$-factors, and the number of $UUD$-factors, respectively.

\begin{lemma}
The functional equation
\begin{equation}\label{func_eq-first}
(x-x^2y+x^2yz)W^2-(1+x-xy)W+1=0
\end{equation}
holds. 
\end{lemma}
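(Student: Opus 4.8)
The plan is to derive \eqref{func_eq-first} from the first-return decomposition of Dyck paths, working throughout with the ``runs'' interpretation recorded in the introduction: the exponent of $y$ counts the number of maximal up-runs (equivalently, $UD$-factors), and the exponent of $z$ counts the number of maximal up-runs of length at least $2$ (equivalently, $UUD$-factors). For a Dyck path $P$ write $\omega(P)=x^{n}y^{k}z^{m}$ for its monomial, so that $W=\sum_{P}\omega(P)$, the empty path contributing $1$. Every nonempty Dyck path factors uniquely as $P=UAD\,B$, where $UAD$ is the portion up to the first return to the axis and $A,B$ are Dyck paths. The first observation I would record is that no maximal up-run straddles the junction between the closing $D$ of the arch and the path $B$ (that junction reads $DU$, or $B$ is empty), so all three statistics are additive across it and $\omega(P)=\omega(UAD)\,\omega(B)$. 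Letting $\mathcal{A}=\sum_{A}\omega(UAD)$ be the generating function for ``arches,'' this gives the relation
\[
W=1+\mathcal{A}\,W, \qquad\text{equivalently}\qquad \mathcal{A}=\frac{W-1}{W}.
\]

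The heart of the argument is to compute $\mathcal{A}$ in terms of $W$ by tracking how prepending $U$ and appending $D$ to $A$ alters the run statistics. Appending $D$ only lengthens the final down-run of $A$ (or, if $A=\varepsilon$, pairs with the front $U$), so it affects only the semilength. Prepending $U$ lengthens the first up-run of $A$ by one without changing the number of runs; the crucial point—and the source of the $z$ in the functional equation—is that this \emph{upgrades} a first up-run of length exactly $1$ into one of length $\ge 2$, whereas a first up-run already of length $\ge 2$ is unaffected. Concretely, if $A=\varepsilon$ then $\omega(UAD)=xy$; if $A\ne\varepsilon$ has first up-run of length $\ge 2$ then $\omega(UAD)=x\,\omega(A)$; and if $A\ne\varepsilon$ has first up-run of length exactly $1$ then $\omega(UAD)=xz\,\omega(A)$.

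I would then partition the nonempty Dyck paths according to their first up-run. Paths whose first up-run has length exactly $1$ are precisely those of the form $UD\,B$ (the first return occurs at step $2$), so their generating function is $xy\,W$; consequently the generating function for paths whose first up-run has length $\ge 2$ is $W-1-xy\,W$. Substituting these into the three cases above yields
\[
\mathcal{A}=xy+x\left(W-1-xy\,W\right)+xz\,(xy\,W).
\]
Equating this with $\mathcal{A}=(W-1)/W$, clearing the denominator, and collecting the coefficients of $W^{2}$, $W$, and $1$ produces \eqref{func_eq-first}. The only genuine obstacle is the junction bookkeeping in the previous paragraph—specifically, verifying that prepending $U$ never creates or destroys a run and that the length-$1$-to-length-$\ge 2$ upgrade is the sole mechanism introducing $z$; once that case analysis is pinned down, the remaining manipulation is routine algebra, and a check of the low-degree terms of $W$ against Theorem~\ref{t-explicit} (for instance $w_{2,1,1}=1$ giving the term $x^{2}yz$) confirms the computation.
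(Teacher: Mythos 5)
Your proposal is correct and takes essentially the same approach as the paper: your first-return (``arch'') decomposition is exactly the paper's decomposition into an irreducible Dyck path followed by a Dyck path, your three cases (empty $A$, first up-run of length exactly $1$, first up-run of length $\geq 2$) reproduce the paper's case analysis on whether the inner path begins with $UD$, and your expression $\mathcal{A}=xy+x(W-1-xyW)+x^2yzW$ is precisely the paper's formula for the irreducible generating function $V$. The final step—substituting into $W=1+\mathcal{A}W$ and rearranging—is likewise identical.
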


\begin{proof}
Let us call a Dyck path of positive semilength \emph{irreducible} if it does not touch the horizontal axis, other than at its starting point and endpoint.
Then each nonempty Dyck path $P$ uniquely decomposes as the concatenation of an irreducible Dyck path and a Dyck path (in that order), where the latter is the empty path if $P$ itself is irreducible. 
If $V(x,y,z)$ is the generating function for irreducible Dyck paths defined similarly to $W(x,y,z)$ but only for irreducible paths, then this leads to the functional equation
\begin{equation} \label{funceq1}
W = 1+ VW.
\end{equation}

Let $I$ be an irreducible Dyck path; then $I$ must start with an up step, end with a down step, and consist of a Dyck path $P_1$ between the two.
If $P_1$ is empty, then $I$ is simply the path $UD$, contributing $xy$ to the generating function $V(x,y,z)$.
Otherwise, $I=UP_1D$, and $I$ has the same number of $UD$-factors as $P_1$, while its semilength is one longer.
The number of $UUD$-factors of $I$ compared with that of $P_1$, however, depends on whether $P_1$ begins with a $UD$, so let us consider these two cases separately: \vspace{5bp}
\begin{itemize}
\item If $P_1$ does begin with a $UD$, then $P_1$ is of the form $UDP_2$ and $I=UP_1D=UUDP_2D$ has one more $UUD$-factor than $P_1$.
Therefore, in this case, $I$ has one more $UD$-factor and one more $UUD$-factor than $P_2$, while its semilength is two longer.
Dyck paths of this type contribute $x^2yzW$ to $V$. \vspace{5bp}
\item If $P_1$ does not begin with a $UD$, then $I$ also has the same number of $UUD$-factors as $P_1$; Dyck paths of this type contribute $x(W-1-xyW)$ to the generating function $V$. \vspace{5bp}
\end{itemize}
Thus, we have the functional equation
\begin{equation} \label{funceq2}
  V= xy+ x^2yzW + x(W-1-xyW).
  \end{equation}
The proof of our claim is now routine by substituting the expression obtained for $V$ in \eqref{funceq2} into  (\ref{funceq1}) and rearranging.  
\end{proof}

\begin{proof}[Algebraic proof of Theorem ~\ref{t-explicit}]
Equation (\ref{func_eq-first}) can be rewritten as  
\begin{equation*}\label{func_eq2}
uW^2-vW+1=0, \quad {\text{where }} u=x-x^2y+x^2yz {\text{ and }} v=1+x-xy.
\end{equation*}
Solving for $W$ in this second degree equation, after elementary manipulations, gives
\begin{equation*}\label{quad_sol}
W=\frac{v-\sqrt{v^2-4u}}{2u}=\frac{v}{2u} \left(1-\sqrt{1-4 \frac{u}{v^2}}\right). 
\end{equation*}
Setting $t=u/v^2$ in the well known classical equation
\begin{equation*}
\frac{1-\sqrt{1-4t}}{2t}=\sum_{\nu \geq 0} C_\nu t^\nu,
\end{equation*}
where $C_\nu=\binom{2\nu}{\nu}/{(\nu+1)}$ is the $\nu^{th}$ Catalan number, we expand $W$ as
\begin{equation}\label{the_sum}
W=W(x,y,z)=\sum_{\nu \geq 0} C_\nu\, \frac{(x-x^2y+x^2yz)^\nu}{(1+x-xy)^{2\nu+1}},
\end{equation}
whose terms are rational functions in $x$, $y$, and $z$ and there are no square roots involved.
We now expand each term of (\ref{the_sum}) using a multinomial or negative binomial expansion.
First, the multinomial expansion of the numerator takes the form 
\begin{equation}\label{expansion1}
(x-x^2y+x^2yz)^\nu = \sum_{i+j+m=\nu}\frac{\nu!}{i!\, j!\, m!}x^i(-x^2y)^j(x^2yz)^m=\sum_{i+j+m=\nu}\frac{(-1)^j\nu!}{i!\, j!\, m!}x^{i+2j+2m}y^{j+m}z^m.
\end{equation}
Next, the negative binomial expansion applied to the denominator gives
\begin{equation}\label{expansion2}
(1+x-yx)^{-(2\nu+1)}=\sum_s \frac{(2\nu+s)!}{(2\nu)!\, s!} (-x+yx)^s= \sum_{\ell,r}\frac{(2\nu+\ell+r)!}{(2\nu)!\, \ell !\,  r!}(-1)^\ell y^rx^{\ell+r}.
\end{equation}
Then, multiplying (\ref{expansion1}) and (\ref{expansion2}), summing with respect to $\nu$, and taking into account that by (\ref{expansion1}) we have $\nu=i+j+m$, Equation (\ref{the_sum}) becomes 
\begin{equation}\label{long_expansion}
W(x,y,z)=\sum_{i,j,m,\ell,r} \frac{(2i+2j+2m+\ell+r)!}{(i+j+m+1)!\, i!\, j!\, m!\, \ell!\, r!}(-1)^{j+\ell}x^{i+2j+2m+\ell+r}y^{j+m+r}z^m.
\end{equation}
Collecting terms in (\ref{long_expansion}), we get
\begin{align}
w_{n,k,m} &= \!\!\!\sum_{\substack{i+2j+2m+\ell+r=n\\ j+m+r=k} } \frac{(2i+2j+2m+\ell+r)!}{(i+j+m+1)!\, i!\, j!\, m!\, \ell!\, r!}(-1)^{j+\ell} \nonumber \\
&=\frac{1}{m!}\!\sum_{\substack{i+2j+2m+\ell+r=n\\j+m+r=k}} \frac{(-1)^{j+\ell}(2i+2j+2m+\ell+r)!}{(i+j+m+1)!\, i!\, j!\, \ell!\, r!}.\label{w2}
\end{align} 
Now, solving the system $\{i+2j+2m+\ell+r=n, j+m+r=k\}$ for $\ell, r\geq0$, the indexes $\ell$ and $r$ must satisfy the conditions
\begin{equation}\label{cond_ell_r}
\ell=n-k-m-i-j \geq 0 \quad \text{and} \quad r=k-m-j \geq 0,
\end{equation}
while $i,j\geq 0$.
This implies that $n$, $k$, and $m$ must satisfy the inequalities
\begin{equation}\label{ineq}
m\leq k \quad \text{and} \quad k+m \leq n 
\end{equation}
(otherwise the sum is empty and has value $0$). 
Substituting in (\ref{w2}) the values of $\ell$ and $r$ given by (\ref{cond_ell_r}), the coefficient $w_{n,k,m}$ is expressed as the double sum
\begin{equation}\label{double_sum}
w_{n,k,m} = \frac{1}{m!}\sum_{\substack{i+j\leq n-k-m \\j \leq k-m}} \frac{(-1)^{n-k-m-i}(n+i)!}{(i+j+m+1)!\, i!\, j!\, (n-k-m-i-j)!\, (k-m-j)!}. 
\end{equation}
This double sum can be greatly simplified using the Maple procedure ``\texttt{sum}" as follows. 
Since the factorials in the denominators in (\ref{double_sum}) are $\pm \infty$ when $i,j$ are big enough to be out of range, the corresponding terms in the double sum vanish.
Hence, we can take the ranges {\texttt{i=0..infty}} and {\texttt{j=0..infty}} in the double sum. 
Typing the Maple command
\vspace{0.2cm}

\noindent \texttt{> w[n,k,m] = 1/m!*sum(((-1)**(n-k-m-i)*(i+n)!/i!)*\\
sum(1/(i+j+m+1)!/j!/(n-i-j-k-m)!/(k-m-j)!,j=0..infinity),i=0..infinity);}
\vspace{0.2cm}\\
\noindent produces the output
\begin{equation*}\label{output}
w_{n,k,m} = -{\frac { \left( -1 \right) ^{n-k-m}n!\, \left( m+1 \right) m\,\sin
 \left( \pi\,k-\pi\,n \right) }{m!\, \left( m+1 \right) !\, \left( n-k
-m \right) !\, \left( k-m \right) !\,\sin \left( \pi\,m \right) 
 \left( k-n \right)  \left( k-1-n \right) }},
 \end{equation*}
which we rewrite in the form
\begin{equation}\label{wmnp+}
w_{n,k,m} = {\frac { \left( -1 \right) ^{n-k-m}n!\, \left( m+1 \right)  }{m!\, \left( m+1 \right) !\, \left( n-k
-m \right) !\, \left( k-m \right) !\, \left( n-k+1 \right) }}\cdot q(n,k,m)
\end{equation}
where $q(n,k,m)$ takes the indeterminate form
 \begin{equation*}
q(n,k,m)=\frac{m\sin(\pi (n-k))}{\sin(\pi m)(n-k)}=\frac{0}{0},
\end{equation*}
\noindent since $n$, $k$, and $m$ are positive integers. We ``lift" this indetermination by making use of the limits
\begin{align}
\lim_{t \rightarrow 0}\frac{t}{\sin(\pi t)}&=\frac{1}{\pi} \label{lim_i} \quad \text{and}\\ 
\lim_{t \rightarrow \pi}\frac{\sin(kt)}{\sin(\ell t)}&=(-1)^{k-\ell}\,\frac{k}{\ell}, \quad k \in \mathbb{Z}, \,\, 0\neq \ell \in \mathbb{Z}.\label{lim_ii}
\end{align}
To complete the proof, we break into cases: \vspace{5bp}
\begin{itemize}
    \item If $m=0$ and $k=n$, then using (\ref{lim_i}) twice, we get $q(k,k,0)=\frac{1}{\pi}\cdot \frac{\pi}{1}=1$. This implies, by (\ref{wmnp+}), that $w_{n,k,0}=1$ when $k=n$. \vspace{5bp}
    \item If $m=0$ and $k \neq n$, then by (\ref{ineq}) we need only to consider the case $k<n$. Hence, by (\ref{lim_i}) we get \[q(n,k,0)=\frac{1}{\pi}\cdot \frac{\sin(\pi(n-k))}{n-k}=0,\] since $\sin(\pi(n-k))=0$ and $n-k\neq 0$. This implies that $w_{k,0,n} = 0$ if $k \neq n$. \vspace{5bp}
    \item If $m>0$ and $k+m\leq n$, then we must have $k < n$. Thus, by (\ref{lim_ii}) with $k=n-k$ and $\ell=m$, we have
\begin{equation*}
q(n,k,m)=\frac{m}{n-k}\cdot \frac{\sin(\pi(n-k))}{\sin(\pi m)}=\frac{m}{n-k}\cdot (-1)^{n-k-m}.
\end{equation*}
Substituting this value into (\ref{wmnp+}) yields
\begin{equation*}
w_{n,k,m}={\frac {n!}{m!\, \left( m-1 \right) !\, \left( n-k-m \right) !\,
 \left( k-m \right) !\, \left( n-k \right)  \left( n-k+1 \right) }},
\end{equation*}
which is equivalent to the desired expression in Theorem ~\ref{t-explicit}. \qedhere
\end{itemize}
\end{proof}

In order to ``cross check" our formula in Theorem \ref{t-explicit}, we computed the first terms of $W(x,y,z)$ using two methods: 
firstly, by making use of the explicit expression for the coefficients given in Theorem \ref{t-explicit}, and 
secondly, by making use of the Maple ``\texttt{mtaylor}" command applied to (\ref{quad_sol}). 
Both methods gave the same results displayed in Table \ref{Tab_w}. 


\begin{table}[ht] 
\caption{The nonzero values of $w_{n,k,m}$ for $0\leq n\leq 10$.}\label{Tab_w}
\small
\begin{tabular}[t]{|c|c|}
\hline
$n,k,m$ & $w_{n,k,m}$\\ \hline \hline
0, 0, 0 & 1 \\ \hline
1, 1, 0 & 1 \\ \hline
2, 1, 1 & 1 \\ 
2, 2, 0 & 1 \\ \hline
3, 1, 1 & 1 \\
3, 2, 1 & 3 \\
3, 3, 0 & 1 \\ \hline
4, 1, 1 & 1 \\
4, 2, 1 & 4 \\
4, 2, 2 & 2 \\
4, 3, 1 & 6 \\
4, 4, 0 & 1 \\\hline
5, 1, 1 & 1 \\
5, 2, 1 & 5 \\
5, 2, 2 & 5 \\
5, 3, 1 & 10 \\
5, 3, 2 & 10 \\
5, 4, 1 & 10 \\
5, 5, 0 & 1 \\ \hline
6, 1, 1 & 1 \\
6, 2, 1 & 6 \\
6, 2, 2 & 9 \\
6, 3, 1 & 15 \\
6, 3, 2 & 30 \\
6, 3, 3 & 5 \\
6, 4, 1 & 20 \\
6, 4, 2 & 30 \\
6, 5, 1 & 15 \\
6, 6, 0 & 1 \\ \hline
\end{tabular}
\quad
\begin{tabular}[t]{|c|c|}
\hline
$n,k,m$ & $w_{n,k,m}$\\ \hline \hline
7, 1, 1 & 1 \\
7, 2, 1 & 7 \\
7, 2, 2 & 14 \\
7, 3, 1 & 21 \\
7, 3, 2 & 63 \\
7, 3, 3 & 21 \\
7, 4, 1 & 35 \\
7, 4, 2 & 105 \\
7, 4, 3 & 35 \\
7, 5, 1 & 35 \\
7, 5, 2 & 70 \\
7, 6, 1 & 21 \\
7, 7, 0 & 1 \\ \hline
8, 1, 1 & 1 \\
8, 2, 1 & 8 \\
8, 2, 2 & 20 \\
8, 3, 1 & 28 \\
8, 3, 2 & 112 \\
8, 3, 3 & 56 \\
8, 4, 1 & 56 \\
8, 4, 2 & 252 \\
8, 4, 3 & 168 \\
8, 4, 4 & 14 \\
8, 5, 1 & 70 \\
8, 5, 2 & 280 \\
8, 5, 3 & 140 \\
8, 6, 1 & 56 \\
8, 6, 2 & 140 \\
8, 7, 1 & 28 \\
8, 8, 0 & 1 \\ \hline
\end{tabular}
\quad 
\begin{tabular}[t]{|c|c|}
\hline
$n,k,m$ & $w_{n,k,m}$\\ \hline \hline
9, 1, 1 & 1 \\
9, 2, 1 & 9 \\
9, 2, 2 & 27 \\
9, 3, 1 & 36 \\
9, 3, 2 & 180 \\
9, 3, 3 & 120 \\
9, 4, 1 & 84 \\
9, 4, 2 & 504 \\
9, 4, 3 & 504 \\
9, 4, 4 & 84 \\
9, 5, 1 & 126 \\
9, 5, 2 & 756 \\
9, 5, 3 & 756 \\
9, 5, 4 & 126 \\
9, 6, 1 & 126 \\
9, 6, 2 & 630 \\
9, 6, 3 & 420 \\
9, 7, 1 & 84 \\
9, 7, 2 & 252 \\
9, 8, 1 & 36 \\
9, 9, 0 & 1 \\ \hline
\end{tabular} 
\quad
\begin{tabular}[t]{|c|c|}
\hline
$n,k,m$ & $w_{n,k,m}$\\ \hline \hline
10, 1, 1 & 1 \\
10, 2, 1 & 10 \\
10, 2, 2 & 35 \\
10, 3, 1 & 45 \\
10, 3, 2 & 270 \\
10, 3, 3 & 225 \\
10, 4, 1 & 120 \\
10, 4, 2 & 900 \\
10, 4, 3 & 1200 \\
10, 4, 4 & 300 \\
10, 5, 1 & 210 \\
10, 5, 2 & 1680 \\
10, 5, 3 & 2520 \\
10, 5, 4 & 840 \\
10, 5, 5 & 42 \\
10, 6, 1 & 252 \\
10, 6, 2 & 1890 \\
10, 6, 3 & 2520 \\
10, 6, 4 & 630 \\
10, 7, 1 & 210 \\
10, 7, 2 & 1260 \\
10, 7, 3 & 1050 \\
10, 8, 1 & 120 \\
10, 8, 2 & 420 \\
10, 9, 1 & 45 \\
10, 10, 0 & 1 \\ \hline
\end{tabular}\\ 
\end{table}


\end{document}